\DeclareMathAlphabet{\mathpzc}{OT1}{pzc}{m}{it}
\theoremstyle{plain}
\newtheorem{theorem}{Theorem}[section]
\theoremstyle{plain}
\newtheorem{proposition}[theorem]{Proposition}
\newtheorem*{proposition*}{Proposition}
\newtheorem*{theorem*}{Theorem}
\newtheorem{lemma}[theorem]{Lemma}
\newtheorem{assumption}[theorem]{Assumption}
\newtheorem*{assumption*}{Assumption}
\newtheorem{corollary}[theorem]{Corollary}
\theoremstyle{definition} 
\newtheorem{definition}[theorem]{Definition}
\newtheorem{remark}[theorem]{Remark}
\numberwithin{equation}{section}
\renewcommand{\section}{\@startsection
{section}
{1}
{\z@}
{-\baselineskip}
{0.8\baselineskip}
{\centering\scshape\large}} 
\renewcommand{\subsection}{\@startsection
{subsection}
{2}
{\z@}
{-0.8\baselineskip}
{0.5\baselineskip}
{\normalfont \bf \normalsize}} 
\renewcommand{\subsubsection}{\@startsection
{subsubsection}
{3}
{\z@}
{-0.8\baselineskip}
{0.5\baselineskip}
{\normalfont \bf \normalsize}} 
\begin{document}
\title[Quantum curves from refined topological recursion]{Quantum curves from refined topological recursion: \newline the genus 0 case}

\author{Omar Kidwai {and} Kento Osuga}

\address{Graduate School of Mathematical Sciences, The University of Tokyo\\ 3-8-1 Komaba, Meguro-ku, T\={o}ky\={o}, 153-8914, Japan}

\email{kidwai@ms.u-tokyo.ac.jp}
\email{osuga@ms.u-tokyo.ac.jp}

\begin{abstract}
We formulate geometrically (without reference to physical models) a refined topological recursion applicable to genus zero curves of degree two, inspired by Chekhov-Eynard and Marchal, introducing new degrees of freedom in the process. For such curves, we prove the fundamental properties of the recursion analogous to the unrefined case. We show the quantization of spectral curves due to Iwaki-Koike-Takei can be generalized to this setting and give the explicit formula, which turns out to be related to the unrefined case by a simple transformation. For an important collection of examples, we write down the quantum curves and find that in the Nekrasov-Shatashvili limit, they take an especially simple form.
\end{abstract}

\maketitle
\setcounter{tocdepth}{2}\tableofcontents

\newpage
\section{Introduction}\label{sec:intro}
In this paper we initiate the extension of several recent results on topological recursion and quantum curves to the \emph{refined} setting. Refined (or ``$\beta$-deformed'') analogues of these ideas were originally pursued by several authors in \cite{CE,CEM1,CEM2,C,M} from the perspective of ($\beta$-deformed) matrix models. For our purposes, such a setting is too restrictive and relies on specific physical models, and thus we reformulate the recursion in a purely geometric way that applies generally to genus zero curves of degree two, before applying it to obtain the corresponding \emph{refined quantum curves}. Thus, we proceed in two steps; in particular,

\begin{enumerate}
    \item We define the refined topological recursion in a uniform and geometric way, avoiding any reference to physical models.  We prove that it satisfies expected properties in analogy with the usual (unrefined) topological recursion. In particular, the output of the refined recursion is a collection of symmetric multidifferentials.
    \item We prove that the corresponding refined quantum curve exists  -- in particular, the $\hbar$-corrections truncate. We obtain an explicit formula, and find that the effect of the refinement can be captured by a simple operation on the unrefined quantum curve of Iwaki-Koike-Takei.
\end{enumerate}

In studying the refined recursion and corresponding quantum curves, a number of surprises appear, some of which we will only touch on in future work. Most pertinent to the present work, we note that in the refined case it turns out that one may introduce additional parameters in the initial data not considered before, without spoiling the fundamental properties. The meaning of these parameters is not entirely clear to us, although their role becomes clearer in the so-called \emph{Nekrasov-Shatashvili limit} \cite{NS1}, which we compute explicitly.


This work lays the ground for several future results. Our motivation came from attempting to generalize the results of \cite{IKT1,IKT2,Iwaki:2021zif,IWAKI2022108191} to the refined setting. In future work we will analyze further the invariants of the refined topological recursion and quantum curves considered here, and relate them to the so-called \emph{(refined) BPS structures} of Bridgeland \cite{Bri18} and Gaiotto-Moore-Neitzke \cite{GAIOTTO2013239}, thus relating the refined topological recursion to refined Donaldson-Thomas theory (generalizing \cite{IWAKI2022108191}). As a result, we expect to be able to provide a solution to Barbieri-Bridgeland-Stoppa's so-called \emph{quantum Riemann-Hilbert problem} for (refined) BPS structures \cite{BBS}, along the lines of \cite{Iwaki:2021zif}. This would provide a canonical example of a ``refined'' \emph{Joyce structure}, recently introduced by Bridgeland to describe the geometry of the space of stability conditions \cite{Bri19}.

Before proceeding to state our main results precisely, let us give a brief background on the topological recursion and motivation for its refined counterpart.

\subsection{Background and motivation}
\subsubsection{Topological recursion}



Topological recursion was introduced by Chekhov-Eynard-Orantin as a recursive technique for solving simple quantum field-theoretic models, so-called Hermitian matrix models. The \emph{partition function} of such models is schematically given by an integral of the form
\begin{equation}
    Z_N = \dfrac{1}{(2\pi)^N N!}\int d\lambda_1\ldots d\lambda_N \prod_{i<j}^{N}({\lambda_i-\lambda_j})^2 e^{-\frac{1}{\hbar}\sum_{i}^N V(\lambda_i)}.
\end{equation}
for a ``potential'' $V$. One major advantage of their method of solution is that it can be applied in generality far beyond the the matrix model setting. Roughly speaking, topological recursion takes as input a \emph{spectral curve}, which is a Riemann surface $\Sigma$ equipped with two meromorphic functions $x,y$ and a certain bidifferential $B$; the output is a infinite collection of meromorphic multidifferentials $\omega_{g,n}$ for $g,n\in \mathbb{Z}_{\geq0}$, via a universal recursive residue formula starting with $\omega_{0,1}:=ydx$ and $\omega_{0,2}:=B$. In particular, all residues involved are taken at the set of \emph{ramification points} $\mathcal{R}$, consisting of certain zeroes and poles of $dx$.

In the decade and a half since its inception, topological recursion has proven to be a powerful tool with applications to quantum field theory, differential equations, and enumerative geometry. In particular, many different enumerative invariants related to the moduli space of curves $\overline{\mathcal{M}}_{g,n}$, such as Hurwitz numbers \cite{BM,BEMS,EMS}, Gromov-Witten invariants \cite{NS2,DN,EO3,DOSS}, Weil-Petersson volumes \cite{Mir1,Mir2,MS1,EO4}, knot invariants \cite{BEM,GS,GJK}, etc. can be cast into and computed using the framework of topological recursion. Most notably, the so-called remodelling conjecture (now theorem) of Bouchard-Klemm-Mari\~{n}o-Pasquetti \cite{BKMP,BKMP2,EO3,FLZ} proves the relation between the Gromov-Witten invariants of toric Calabi-Yau threefolds and $\omega_{g,n}$ where $\Sigma$ is taken to be the ``toric mirror curve''. Recently, a relationship between topological recursion and \emph{Donaldson-Thomas invariants} or \emph{BPS invariants} was found  in certain cases by the first author and collaborators \cite{Iwaki:2021zif,IWAKI2022108191}.

Since topological recursion became known as a powerful tool in mathematics and physics, the structure of the recursion itself has also been investigated intensively. It is shown that the fundamental structure underlying the topological recursion is a set of equations relating different $\omega_{g,n}$'s called \emph{abstract loop equations} \cite{BEO}. 
This approach has made it possible to generalize topological recursion in several directions, e.g., blobbed topological recursion \cite{BS}, topological recursion with higher-order ramifications \cite{BE2,BBCCN}, and supersymmetric topological recursion \cite{SAS,BO2,O2}. Kontsevich and Soibelman proposed an algebraic approach to solve abstract loop equations in terms of so-called Airy structures \cite{ABCD,KS2} which established a new research direction between topological recursion and vertex operator algebras \cite{BBCCN,SAS,BO2,O2,BBCC}.



\subsubsection{Refined topological recursion and $\beta$-deformation}

There is another perspective, more algebraic in nature, which leads us to the refined setting. The partition function of a Hermitian matrix model is annihilated by so-called \emph{Virasoro operators} of the Heisenberg vertex operator algebra (see \cite{BO1,E16,MS} and references therein for this aspect). It turns out that the Heisenberg vertex operator algebra admits a natural one (complex) parameter deformation called $\beta$-deformation, which is known as the linear dilaton CFT, or the free CFT in a background charge in physics literature. 

Taking this perspective over matrix models, one can consider ``partition functions'' which are by definition annihilated by the $\beta$-deformed Virasoro operators. For special values of $\beta$, these can be interpreted as a partition function for a kind of ``$\beta$-deformed matrix model''. When $\beta=1$, we recover the usual Hermitian matrix model, and when $\beta=\frac{1}{2}, 2$ we obtain so-called orthogonal and symplectic matrix models, respectively. However, for generic values of $\beta$, there is often no known interpretation in terms of a matrix model (at least in the traditional sense --- see \cite{DE} for more general examples). For general $\beta$, Dijkgraaf and Vafa \cite{DV} suggested that $\beta$-deformed matrix models explain the Alday-Gaiotto-Tachikawa correspondence \cite{AGT}, that is, the relation between Nekrasov's partition function \cite{Nekrasov:2002qd} and conformal blocks. From the perspective of this relation, our result corresponds to considering the case of generic $\Omega$-background of the gauge theory rather than the self-dual limit $\epsilon_1=-\epsilon_2=\hbar$ which the unrefined topological recursion has access to (see \cite{BMT} for related discussion).
 
The refined topological recursion\footnote{ There is another approach to recursively solve $\beta$-deformed matrix models, which is called \emph{non-commutative topological recursion} \cite{CEM1,CEM2}. Such a recursion is more involved due to the noncommutativity which is present even in the initial data. It would be very interesting to understand the relation to the present work.} was proposed for some special cases of $\beta$-deformed matrix models in \cite{CE,C,M}. However, the analogous theory along the lines of Eynard-Orantin \cite{EO} for arbitrary spectral curves (or even clarification of the initial data) not arising from a matrix model has not yet been established. The main obstacle is that it is not known how to generalise abstract loop equations to the refined setting, which makes it inaccessible from the Airy structure point of view too. We will not solve this problem in general in the present paper --- rather, we restrict to the special case of spectral curves which are of genus zero and degree two. The resulting presence of a global involution is crucial to our formulation, while the condition on the genus can be relaxed. One of the main goals of the present work is to give a clear geometric description of refined topological recursion in this situation, which we hope will enable a deeper understanding of the refined setting more generally in the future.

We will describe all of the new ingredients that appear in the refined recursion below. For now, let us simply note the most salient differences: the $\omega_{g,n}$'s are now indexed by \emph{half-integers} $g\in \frac12 \mathbb{Z}_{\geq0}$, with the property that all $\omega_{g,n}$, $g\notin\mathbb{Z}$ vanish when $\beta=1$, recovering the unrefined case. Furthermore, while the unrefined recursion boils down to the differentials $\omega_{0,1}$ and $\omega_{0,2}$, the refined case requires the introduction of another differential $\omega_{\frac12,1}$ constructed from the initial data with additional parameters ${\bm \mu}$. Finally, we note that the pole structure of $\omega_{g,n}$ is more complicated in the refined case, which is why it is no longer sufficient to take residues only at ramification points as in the unrefined case, complicating various analyses.

\subsubsection{Quantum curves}
The main application of (refined) topological recursion relevant to the present paper is the construction of quantum curves. These are differential operators which annihilate a certain formal $\hbar$-series constructed from the topological recursion.

More precisely, in both the refined and unrefined cases, we may carefully assemble the $\omega_{g,n}$ in a specific way to obtain the \emph{topological recursion wavefunction} $\psi^{{\rm TR}}(x)$:
\begin{equation}
    \psi^{\rm TR}(x) := \exp \left( \sum_{g\in \frac{1}{2}\mathbb{Z}_{\geq 0},n>0} \dfrac{\hbar^{2g-2+n}}{\beta^{n/2}} \dfrac{1}{n!} \int_{ D(z;{ {\bm \nu}})}\!\!\!\ldots\int_{D(z;{ {\bm \nu}})}\omega_{g,n}(\zeta_1,\ldots,\zeta_n)\right)
\end{equation}
where $\beta=1$ in the unrefined case. The multi-integrals are determined by a divisor $D(z;{\bm \nu})$ which depends on some free parameters ${\bm \nu}$. If there exists a differential operator that annihilates the wavefunction of the form
\begin{equation}
\left(\hbar^{2}\dfrac{d^2}{dx^2}+q(x,\hbar)\hbar \dfrac{d}{dx}+r(x,\hbar)\right)\psi^{\rm \mathsmaller{TR}}(x)=0,
\end{equation} 
then such an equation is called the \emph{quantum curve} associated to the spectral curve.

The study of quantization via topological recursion in the unrefined setting has been investigated in varying levels of generality in e.g. \cite{GS,BE2,Iwaki19,EGMO}. The notion of quantum curve itself drew attention in \cite{ADKMV} which discussed a relation between integrable systems and the topological string. It was then proposed by \cite{DM1,DM2} that quantum curves are closely related to the geometry of Higgs bundles (see \cite{DM3} for a summary of progress on quantum curves). 

Quantizing curves in general is highly nontrivial. In the case of curves of genus zero and degree two, Iwaki-Koike-Takei \cite{IKT1,IKT2} were able to show existence of and obtain explicit expressions for quantum curves. Furthermore, they used the form of the quantum curves to obtain closed formulae for certain topological recursion invariants called \emph{free energies} in a collection of nine explicit examples related to (confluent limits of) the Gauss hypergeometric equation, so-called spectral curves of \emph{hypergeometric type}. Building on this work, Iwaki and the first author \cite{Iwaki:2021zif, IWAKI2022108191} recently found a relation between the (unrefined) topological recursion free energy and the corresponding \emph{BPS structure} which encodes Donaldson-Thomas invariants corresponding to stability conditions on a certain Calabi-Yau-3 triangulated category arising from such curves \cite{Bri18,bridgeland2015quadratic}.

\subsection{Main results}
In the present paper, we formulate and establish the refined topological recursion and determine the corresponding quantum curves in the case where $\Sigma$ is of genus zero and degree two (thus equipped with a global involution $\sigma$).

Our first achievement is thus a definition --- we define the refined topological recursion from the initial data of a \emph{refined spectral curve} $\mathcal{S}^{\bm \mu}$, which in addition to the usual data of $(\Sigma,x,y,B)$ includes a tuple ${\bm \mu}$ of complex parameters associated to a choice of half of the poles and zeroes of $y$. The \emph{refined topological recursion} is then given by a modified recursion formula, whose starting point now includes an additional 1-form $\omega_{\frac12,1}$ we describe in Section {\ref{subsubsec:RTR}}. When all the parameters in ${\bm \mu}$ are set to $+1$ or $-1$, we recover the results appearing in \cite{CE,C,M} for $\beta$-deformed matrix models.

As our first main theorem, we establish the fundamental properties of this recursion, the first and third of which are identical to the unrefined case:
\begin{theorem}[Theorem \ref{thm:RTR}]\label{thm:RTRintro}
Let $\mathcal{S}^{\bm \mu}$ be a genus zero degree two refined spectral curve satisfying {\rm Assumption \ref{ass:tr}}. For any $2g+n\geq2$, the multidifferentials constructed from the refined topological recursion satisfy the following properties:
\begin{enumerate}[label={\bf RTR\arabic*:}, ref=\arabic*,leftmargin=1.7cm]
\item The multidifferential $\omega_{g,n+1}$ is symmetric;

\item All poles of $\omega_{g,n+1}(p_0,J)$ in the first variable lie in $\mathcal{R}^*\cup\sigma(J)$;

\item At any $o\in \Sigma$, $\omega_{g,{n+1}}$ is residue-free in the first (thus, any) variable:
\begin{equation}
    \underset{p=o}{{\rm Res}}\,\omega_{g,n+1}(p,J)=0;
\end{equation}
\vspace{-4mm}
\item  For $n>0$, we have
\vspace{-3mm}
\begin{equation}
(2g-2+n)\,\omega_{g,n}(J)=-\left(\sum_{r\in\mathcal{R}^*}\underset{p=r}{{\rm Res}}+\sum_{r\in\sigma(J)}\underset{p=r}{{\rm Res}}\right)\Phi(p)\cdot\omega_{g,n+1}(p,J).
\end{equation}

\end{enumerate}
where $J=(p_1,\ldots,p_n)\in \Sigma^n$, $\mathcal{R}^*$ is a certain subset of $\mathcal{R}$, and $\Phi$ is any primitive of $ydx$.
\end{theorem}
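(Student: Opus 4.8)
The plan is to prove the four properties simultaneously by an interlocking induction on $2g+n$, following the logical skeleton of the unrefined proof of Eynard--Orantin but tracking carefully the two genuinely new features of the refined setting: the extra starting differential $\omega_{\frac12,1}$ and the enlarged pole locus $\sigma(J_0)$. At each level, the symmetry RTR1 of the lower differentials feeds the pole analysis RTR2 at the current level, so the four statements must be carried together through the induction. Since the abstract loop equations are unavailable here, the argument instead exploits the global involution $\sigma$ coming from the genus zero, degree two hypothesis; this involution is what makes all of the residue manipulations below explicit and what ultimately closes the induction.

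First I would dispose of the base cases. Beyond the usual unrefined seeds $\omega_{0,3}$ and $\omega_{1,1}$, the half-integer grading forces me to treat $\omega_{\frac12,2}$ and $\omega_{\frac32,1}$ by hand, computing them directly from the recursion kernel together with the explicit forms of $\omega_{0,1}=y\,dx$, $\omega_{0,2}=B$, and $\omega_{\frac12,1}$. For these low cases, symmetry, the pole structure, and residue-freeness can be verified explicitly using the genus zero expressions, with $\sigma$ controlling the location of the poles.

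For the inductive step I would establish RTR2, RTR3 and RTR4 in that order, as each feeds the next. For the pole structure (RTR2), insert the recursion formula and analyze it term by term: the recursion kernel contributes poles only at $\mathcal{R}^*$, while by the inductive hypothesis the lower differentials contribute poles at $\mathcal{R}^*$ together with the involutions of their arguments; the genuinely new contribution is the appearance of poles at $\sigma(J_0)$, which must be traced to $\omega_{0,2}=B$ evaluated at $\sigma$-related points and to the refined term built from $\omega_{\frac12,1}$. Residue-freeness (RTR3) then follows on a genus zero curve from RTR2 together with the $\sigma$-symmetry of the pole configuration. For the dilaton-type equation (RTR4), I would substitute the recursion into the right-hand side, exchange the order of the residues, and use that $\Phi$ is a primitive of $y\,dx$ together with RTR3 just proved; the refined contribution produces an extra piece whose residues at $\sigma(J)$ must be shown to combine correctly, which is precisely why the residue set in RTR4 is enlarged relative to the unrefined case.

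The main obstacle is the symmetry RTR1. In the unrefined theory this is the deepest property and is normally deduced from the abstract loop equations, the very tool unavailable in the refined setting. My plan is therefore to prove symmetry directly: use $\sigma$ to rewrite $\omega_{g,n+1}$ in a form that balances the distinguished variable against the remaining arguments in $J$, and then show invariance under the swap $p_i \leftrightarrow p_j$ by a contour-deformation argument, moving residues across the Riemann sphere and collecting only the poles catalogued in RTR2. The new ingredient $\omega_{\frac12,1}$ injects terms with no unrefined analogue, and the extra poles at $\sigma(J_0)$ make these contour deformations more delicate, so the crux will be a concrete refined replacement for the linear and quadratic loop equations, valid specifically for genus zero degree two curves. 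I expect essentially all of the difficulty of the theorem to be concentrated in verifying that these refined terms respect the symmetry.
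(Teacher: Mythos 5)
Your proposal has the right overall skeleton --- induction on $2g+n$, explicit base cases, systematic use of the global involution, a double-residue/contour-exchange argument for symmetry, and the recognition that a refined substitute for the loop equations is the crux --- but it contains two genuine structural gaps. First, the ordering within each inductive level (RTR2 $\Rightarrow$ RTR3 $\Rightarrow$ RTR4 $\Rightarrow$ RTR1) cannot work as stated. From the recursion formula alone one can only control poles in the \emph{first} variable (this is Lemma \ref{lem:poles} of the paper, proved by its own induction); the statement RTR2 ``in any variable'' is obtained only as a corollary (Corollary \ref{coro:poles}) of symmetry at that level combined with the first-variable lemma, so RTR2 cannot precede RTR1. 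Likewise, the natural proof of RTR4 (Proposition \ref{thm:Rinverse}) re-applies the recursion with a \emph{spectator} variable of $\omega_{g,n+1}$ promoted to the distinguished slot, so that $\Phi(p)$ can be integrated against ${\rm Rec}^{\mathscr{Q}}_{g,n+1}(q,p,\widehat{J}_1)$ and the inductive hypothesis applied; this promotion is exactly current-level symmetry, so putting RTR1 last also breaks RTR4. Second, your claim that RTR3 ``follows from RTR2 together with the $\sigma$-symmetry of the pole configuration'' is not a proof: knowing where poles lie says nothing about residues vanishing. In the paper the logic runs in the opposite direction to your linear chain: residue-freeness is first established only for $\omega_{g,n+2}$ with at least two variables (Proposition \ref{lem:residue1}, by a residue-exchange argument exploiting that $\omega_{\frac12,1}$ never appears evaluated at spectator variables), this partial RTR3 is what makes RTR4 well-defined and provable, and only then is RTR3 for the one-variable differentials $\omega_{g,1}$ deduced \emph{from} RTR4 (Proposition \ref{prop:residue2}). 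Your chain has no mechanism at all for the $\omega_{g,1}$ case.

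Beyond these ordering issues, the piece you yourself identify as carrying ``essentially all of the difficulty'' is left as a placeholder. The paper supplies it concretely as Lemma \ref{lem:GLP}: the quadratic differential
\begin{equation*}
P_{g,n+1}(p,J)=2\,\omega_{0,1}(p)\,\omega_{g,n+1}(p,J)+{\rm Rec}^{\mathscr{Q}}_{g,n+1}(p,J)
\end{equation*}
is invariant under $\sigma$ and holomorphic at every effective ramification point, proved via the explicit rewritings of the recursion (e.g.\ \eqref{P03}, \eqref{P1,1-1}, \eqref{Pg,n+1-1}), together with the explicit form of $\omega_{\frac12,1}$ (Lemma \ref{lem:pole1/2,1}) and the hand-proved symmetry of $\omega_{\frac12,2}$ (Lemma \ref{lem:w12,2}). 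Naming the need for such a statement is not the same as having it, so the symmetry step --- and hence the theorem --- remains unproved in your proposal. (A minor additional slip: $\omega_{\frac32,1}$ is not a base case; it sits at level $2g+n=3$ and is handled by the induction, the base level $2g+n=2$ consisting of $\omega_{0,3}$, $\omega_{\frac12,2}$, $\omega_{1,1}$ only.)
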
 

From the formula defining the recursion, the symmetry is completely non-obvious, and indeed takes considerable work to prove. The main difference from the unrefined case is the presence of additional poles in $\omega_{g,n+1}(p_0,J)$ at $p_0$ and $J$, and correspondingly we must take care at these points throughout. Nonetheless, $\omega_{g,n+1}$ remain residueless in the end. 



{Note that since our recursion is not restricted to  those arising from explicit physical models appearing in \cite{CE,C,M}, it can be applied to degree two curves that do not admit a $\beta$-deformed matrix model description such as the Airy curve $y^2-x=0$ or the (degenerate) Bessel curve $xy^2-1=0$ (both of which play a fundamental role in the theory), which appear as examples in Section {\ref{sec:QCHG}}.}

Having established the fundamental properties of the recursion, we turn to the quantization. We show that the explicit construction of the quantum curve in \cite{IKT1,IKT2} generalizes to our setting. Furthermore, we observe that the final result can in fact be obtained by a remarkably simple operation on the original unrefined quantum curve. More precisely, we have:

\begin{theorem}[Theorem \ref{thm:main}] Given a genus zero degree two refined spectral curve $\mathcal{S}^{\bm \mu}$ satisfying {\rm Assumption} \emph{\ref{ass:tr}} and a divisor
\begin{equation}
    D(z;{\bm \nu })=[z]-\sum_{p\in \mathcal{P}}\nu_{p}[p], \qquad \sum_{p \in \mathcal{P}}\nu_p=1,
\end{equation}
the wavefunction $\psi^{\rm \mathsmaller{TR}}(x)$
is a formal solution of the equation
\begin{equation}
\left(\hbar^{2}\dfrac{d^2}{dx^2}+q(x,\hbar)\hbar \dfrac{d}{dx}+r(x,\hbar)\right)\psi^{\rm \mathsmaller{TR}}(x)=0
\end{equation}  
where $q(x,\hbar)$, $r(x,\hbar)$ are (finite) formal $\hbar$-series of rational functions
\begin{equation}
    q(x,\hbar)=q_0(x)+\hbar q_1(x), \quad r(x,\hbar) = r_0(x) + \hbar r_1(x) + \hbar^2 r_2(x).
\end{equation}
which can be explicitly computed. Furthermore, if we denote the unrefined limit $\beta=1$ by $r(x,\hbar;{\bm \nu})=r(x,\hbar;{\bm \mu},{\bm \nu},\beta)|_{\beta=1}$, $q(x,\hbar;{\bm \nu})=q(x,\hbar;{\bm \mu},{\bm \nu},\beta)|_{\beta=1}$, we have
\begin{align}
    &r(x,\hbar;{\bm \mu},{\bm \nu},\beta)=\frac{1}{\beta}r(x,{\hbar \beta^{-\frac12}};\widehat{{\bm \nu}})\\
    &q(x,\hbar;{\bm \mu},{\bm \nu},\beta)=\frac{1}{\beta^{\frac12}}q(x,{\hbar {\beta^{-\frac12}}};{{\bm \nu}}),
\end{align}
where $\widehat{\bm \nu}$ denotes the shifted parameters
\begin{equation}
    \hat{\nu}_p={\nu_p}-{\rm ord}_p(y)\left(\tfrac{1+\mu_p}{2}\right)(\beta-1).
\end{equation}
\end{theorem}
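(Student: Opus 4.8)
The plan is to generalize the construction of Iwaki--Koike--Takei to the refined setting, exploiting the global involution $\sigma$ available for a genus zero degree two curve. First I would produce the two candidate solutions by placing the free endpoint of the integration divisor at a point $z\in\Sigma$ over $x$ and at its image $\sigma(z)$, writing $\psi:=\psi^{\rm TR}$ built from $D(z;{\bm\nu})$ and $\psi_\sigma$ built from $D(\sigma(z);{\bm\nu})$. Setting $u:=\hbar\,\tfrac{d}{dx}\log\psi$ and $\tilde u:=\hbar\,\tfrac{d}{dx}\log\psi_\sigma$, a short Riccati computation shows that the monic second order operator annihilating both solutions has coefficients
$$q = -(u+\tilde u)-\hbar\,\frac{d}{dx}\log(u-\tilde u), \qquad r = u\,\tilde u-\hbar\,\frac{u\,\tfrac{d\tilde u}{dx}-\tilde u\,\tfrac{du}{dx}}{u-\tilde u}.$$
By differentiating the wavefunction and using that $\partial_z\!\int_{D(z;{\bm\nu})}\omega_{g,n}$ inserts $\omega_{g,n}$ with one leg at $z$, I would express $u,\tilde u$ as $\hbar$-series whose leading terms are the two branches $y_\pm$ of $y$ (carrying the explicit factor $\beta^{-1/2}$ coming from the $\beta^{-n/2}$ weighting of $\omega_{0,1}$), so that $q_0=-\beta^{-1/2}(y_++y_-)$ and $r_0=\beta^{-1}y_+y_-$ recover the classical curve. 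Since $u+\tilde u$, $u\tilde u$ and the bracketed expressions above are manifestly $\sigma$-invariant, they descend to the $x$-line; by RTR2 their only possible poles lie over $\mathcal{R}^*$ and the poles of $y$, so they are rational in $x$.

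The heart of the argument, and the step I expect to be the main obstacle, is the \emph{truncation}: a priori $u,\tilde u$ are infinite $\hbar$-series, so $q,r$ could carry infinitely many $\hbar$-corrections, and the content of the theorem is that everything beyond $q_0+\hbar q_1$ and $r_0+\hbar r_1+\hbar^2 r_2$ cancels. Following the residue proof of Iwaki--Koike--Takei, I would rewrite the higher $\hbar$-coefficients of $q$ and $r$ as sums of residues of the $\omega_{g,n}$ paired against $\Phi$ (a primitive of $ydx$), and show that the abstract loop equation RTR4 together with the residue-freeness RTR3 forces these to telescope to zero. The genuinely new difficulty relative to the unrefined case is the more intricate pole structure recorded in RTR2: the refined $\omega_{g,n+1}(p_0,J)$ acquire additional poles at the other insertion points and at their images under $\sigma$. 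One must check that, after specializing all legs to $D(z;{\bm\nu})$ and applying $\partial_z$, these extra poles either cancel in the $\sigma$-symmetric combinations controlling $q$ and $r$ or contribute only to the finitely many surviving orders. Controlling these contributions, and in particular verifying that they do not generate terms of order $\hbar^{\geq2}$ in $q$ or $\hbar^{\geq3}$ in $r$, is where the bulk of the technical work lies.

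Having established the finite forms, I would compute the coefficients $q_0,q_1,r_0,r_1,r_2$ explicitly as rational functions, generalizing the IKT formulas, and then verify the transformation by direct comparison with the unrefined expressions. I do \emph{not} expect a naive wavefunction substitution to prove the relation, since the initial data $\omega_{0,1},\omega_{0,2}$ do not rescale with $\beta$; rather, the relation is a statement about the finitely many coefficients that must be matched term by term. The conceptual inputs that organize this matching are twofold. The overall factors $\beta^{-1/2}$ (for $q$) and $\beta^{-1}$ (for $r$), together with the argument shift $\hbar\mapsto\hbar\beta^{-1/2}$, arise from the $\beta^{-n/2}$ weighting distributing across $\hbar$-orders once one tracks how the higher refined differentials scale under $\hbar\mapsto\hbar\beta^{-1/2}$ relative to their unrefined counterparts. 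The parameter shift $\hat\nu_p=\nu_p-{\rm ord}_p(y)\big(\tfrac{1+\mu_p}{2}\big)(\beta-1)$ is sourced entirely by the single new form $\omega_{\frac12,1}$: I would show it is $\sigma$-anti-invariant with simple poles whose residues are proportional to ${\rm ord}_p(y)\big(\tfrac{1+\mu_p}{2}\big)$ at the chosen poles and zeroes of $y$, so that its exponential in the wavefunction is precisely the prefactor produced by shifting $\nu_p\mapsto\hat\nu_p$ in the divisor.

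Finally, the asymmetry between the two formulas — that $q$ is expressed through the original ${\bm\nu}$ while $r$ requires the shifted $\widehat{\bm\nu}$ — follows from the $\sigma$-anti-invariance of the shift just identified. Because the shift enters $u$ and $\tilde u$ with opposite sign (being odd under $\sigma$), it cancels in the symmetric combination $u+\tilde u$ that governs $q$, leaving the unshifted ${\bm\nu}$-dependence, but it survives in the product $u\tilde u$ that governs $r$, producing exactly the replacement ${\bm\nu}\mapsto\widehat{\bm\nu}$. Matching the finitely many explicit coefficients on both sides then completes the proof.
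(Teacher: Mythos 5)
Your opening construction is a genuinely different route from the paper's: you build the operator by Wronskian elimination from the two wavefunctions attached to $D(z;{\bm\nu})$ and $D(\sigma(z);{\bm\nu})$, whereas the paper works with a single wavefunction and matches recursions. The elimination algebra and the $\sigma$-invariance argument for rationality in $x$ are fine. However, there is a genuine gap exactly where you locate the heart of the matter: the truncation. The mechanism you propose --- rewriting the higher $\hbar$-coefficients of $q,r$ as residues of $\omega_{g,n}$ against $\Phi$ and letting ``the abstract loop equation RTR4 together with RTR3'' telescope them to zero --- is neither what Iwaki--Koike--Takei actually do nor a workable substitute. RTR\ref{RTR4} is the dilaton equation: it relates $\omega_{g,n+1}$ paired with $\Phi$ back to $\omega_{g,n}$ and is used for free energies and well-definedness statements; it gives no control over the pole structure or $x$-dependence of the combinations $u+\tilde u$, $u\tilde u$, and nothing in it forces the vanishing of $\hbar^{\geq 2}$ terms in $q$ or $\hbar^{\geq 3}$ terms in $r$. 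What proves truncation, both in IKT and in this paper, is structural: starting from the rearranged recursion \eqref{pole2} one derives the recursions for $\mathpzc{G}_{g,n}$ and $\mathpzc{H}_{g,n}$ (Lemma \ref{lem:hrec}), assembles them into the recursion of Proposition \ref{prop:trec} for the log-derivative coefficients $T^{\rm \mathsmaller{TR}}_m$, $m\geq 1$, and observes that it is \emph{identical in form} to the WKB hierarchy \eqref{eq:ricT} with $U_j=0$ for $j\geq 2$ and $V_j=0$ for $j\geq 3$; truncation is then automatic, and $V_1,V_2$ are read off from the $m=-1,0$ equations. If you insist on the two-solution picture, the ingredient that could replace this is Lemma \ref{lem:GLP} (the refined loop equations: $P_{g,n+1}$ is $\sigma$-invariant and holomorphic at $\mathcal{R}^*$), which your outline never invokes; converting it into order-by-order truncation of $q,r$ is precisely the technical work left blank.

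Your two ``conceptual inputs'' for the $\beta$/shift relation are also incorrect as stated. By Lemma \ref{lem:pole1/2,1}, $\omega_{\frac12,1}=\frac{\mathscr{Q}}{2}\bigl(-\frac{dy}{y}+\sum_{p\in\widetilde{\mathcal{P}}_+}\mu_p\,{\rm ord}_p(y)\,\eta_p\bigr)$; since $\sigma^*(dy/y)=dy/y$ while $\sigma^*\eta_p=-\eta_p$, this form has a nonzero $\sigma$-\emph{invariant} part alongside the anti-invariant $\mu$-part, so it is not odd under the involution. Consequently the shift does not cancel in symmetric combinations: using $\mu_{\sigma(p)}=-\mu_p$ and ${\rm ord}_{\sigma(p)}(y)={\rm ord}_p(y)$ one finds $\hat\nu_p+\hat\nu_{\sigma(p)}=\nu_p+\nu_{\sigma(p)}-{\rm ord}_p(y)(\beta-1)\neq\nu_p+\nu_{\sigma(p)}$, and in the paper the unshifted ${\bm\nu}$-dependence of $q_1$ in \eqref{eq:preciseformula1} emerges only because the $\sigma$-even piece $-y'/y$ is carried as a separate term. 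Similarly, the refined $\omega_{g,n}$ are polynomials in $\mathscr{Q}$ whose constant terms are the unrefined differentials; they are not rescalings of their unrefined counterparts under $\hbar\mapsto\hbar\beta^{-1/2}$, so the claimed scaling heuristic has no content. In the paper the relation in Theorem \ref{thm:main} is not derived from any such principle: it is observed only after the explicit formulas \eqref{eq:preciseformula1}--\eqref{eq:preciseformula3} are computed and compared with the IKT expressions. So both the truncation and the structure of the final answer still require the computation your proposal defers.
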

We note that for a few special cases -- at least the Weber and the Kummer curves -- the quantum curve was obtained through CFT methods in \cite{MS,CHMS}. However, that construction did not involve a choice of integration divisor or the parameters ${\bm \mu}$, and corresponds to a particular value of ${\bm \mu}$ and ${\bm \nu}$ in our notation. In particular, we construct the whole family of quantum curves in a uniform geometric way, which is in fact essential for applications we have in mind.

In the last section, we write down explicitly the quantum curves for the spectral curves of hypergeometric type which played a fundamental role in \cite{IKT1,IKT2,Iwaki:2021zif,IWAKI2022108191}. We furthermore compute their Nekrasov-Shatashvili limit, in which we take $\hbar\to0,\beta\to\infty$ while keeping $\hbar\beta^{\frac12}=\epsilon_1$ fixed. This limit enjoys a close relation to quantum integrability \cite{NS1,Nekrasov:2011bc,Aganagic:2011mi} and many other applications (e.g. \cite{hollandskidwai,Hollands:2019wbr,Hollands:2021itj,Alim:2022oll,Grassi:2019coc,Grassi:2021wpw}). Thus, in the special case we have considered, our result relates different notions of quantum curve appearing in the literature. We hope our result will help shed light on the relationship between the two limits more generally in the future.

\subsection{Comments and open problems}
Let us comment on our results and mention some problems we hope to address in the future.

\begin{itemize}
\itemsep2pt
    \item One of the advantages of the geometric description of the refined topological recursion is that we can now ask enumerative questions in the refined setting. In particular, it is known that the multidifferentials $\omega_{g,n}$ of the unrefined topological recursion for the Airy curve $y^2-x=0$ are generating functions of the Kontsevich-Witten intersection numbers over moduli spaces of Riemann surfaces $\overline{\mathcal{M}}_{g,n}$ \cite{EO2,K,W}. It is an open question whether the refined topological recursion on the Airy curve admits an enumerative interpretation. The matrix model argument suggests that, for a special value of $\beta$, it could be related to moduli spaces of non-orientable surfaces.
    
    \item Although our computations are limited to genus zero curves, the refined topological recursion can be generalised to higher genus degree two curves, where additional care must be taken due to the presence of non-contractible cycles. The higher genus recursion in the refined setting is expected to have interesting applications in the context of (quantum) Painlev\'{e} equations and isomonodromy systems. In particular, the relation between the unrefined topological recursion and isomonodromy systems has been studied in \cite{Iwaki19,IM,IS}, and it would be interesting to investigate how the refinement contributions appear in their results. 
    
    \item Refinement of the topological recursion for higher degree algebraic curves is a tricky problem. This is due to the absence of the global involution which is critical to our construction. Equivalently from the Airy structure point of view, twisted modules of the Heisenberg vertex operator algebra are known not to admit $\beta$-deformation. Nevertheless, our refined recursion for the Airy curve is well defined, hence it may be possible to reformulate it algebraically along these lines.
    
    \item K. Iwaki and the first author \cite{Iwaki:2021zif} recently found a relation between the (unrefined) topological recursion free energy and the corresponding \emph{BPS structure} which encodes Donaldson-Thomas invariants arising from hypergeometric type spectral curves. In a sequel paper, we will prove that an analogous relation continues to hold in the refined setting, but with new ingredients due to the refinement.
    
    \item In \cite{IWAKI2022108191}, the above result was used to solve a certain Riemann-Hilbert problem defined by Bridgeland \cite{Bri18}, which describes the geometry of certain spaces of stability conditions. In that work it was shown that Voros coefficients (invariants of the quantum curve) associated with the unrefined examples of \cite{IKT2} provide a canonical solution, and furthermore the associated \emph{{$\tau$}-function} was computed. We expect a corresponding relation between the refined quantum curves and the corresponding \emph{quantum} Riemann-Hilbert problem of Barbieri-Bridgeland-Stoppa \cite{BBS}, whose details will appear in a future publication.
    \item While we obtained the generalized recursion with new parameters ${\bm \mu}$, we have not understood deeply their meaning. We hope that a careful study of the above problems may shed light on this.

\end{itemize}

\subsubsection*{Acknowledgements}

We thank Tom Bridgeland, Leonid Chekhov, Lotte Hollands, Kohei Iwaki, Masahide Manabe, Piotr Su{\l}kowski, and Alex Youcis for helpful discussions and correspondence. The work of OK is supported by a JSPS Postdoctoral Fellowship for Research in Japan (Standard) and a JSPS Grant-in-Aid (KAKENHI Grant Number 19F19738). The work of KO is in part supported by the Engineering and Physical Sciences Research Council under grant agreement ref.~EP/S003657/2, in part by the TEAM programme of the Foundation for Polish Science co-financed by the European Union under the European Regional Development Fund (POIR.04.04.00-00-5C55/17-00), and also in part by a JSPS Grant-in-Aid (KAKENHI Grant Number 21H04994 and 22J00102). A significant part of the work of KO was carried out while he was affiliated at the University of Sheffield and the University of Warsaw.

\section{Refined topological recursion}\label{sec:TR}

In this section, we define geometrically and establish the fundamental properties of the refined topological recursion for our class of spectral curves. We first review the standard (unrefined) topological recursion before proceeding to the refined case.

\subsubsection*{Multidifferentials}
The topological recursion involves a collection of geometric objects called multidifferentials defined on the product $(\Sigma)^k$ of a Riemann surface $\Sigma$.
Denote by $\pi_i:(\Sigma)^k\rightarrow \Sigma$, $i=1,\ldots k$ the $i$th projection map. Then a \emph{$k$-differential} is a meromorphic section of the line bundle
\begin{equation}\label{eq:multidif}
\pi_1^*T^*\Sigma\otimes \pi_2^*T^*\Sigma\ldots\otimes\pi_k^*T^*\Sigma. 
\end{equation}

We use the term \emph{multidifferential} to mean a $k$-differential without specifying $k$, and sometimes \emph{bidifferential} for the case $k=2$. Usually when writing multidifferentials in coordinates $z_1,\ldots z_k$ we simply omit the $\otimes$ symbol, and write e.g. $f(z_1,z_2) dz_1dz_2$. Integration and differentiation is defined factorwise in the obvious way. A multidifferential will be called \emph{symmetric} if it is invariant under permutation of variables:
\begin{equation}
\omega(z_1,\ldots,z_k)=\omega(z_{s(1)}, \ldots, z_{s(k)})
\end{equation}
for any permutation $s\in\mathfrak{S}_k$.

We frequently use the letter $J=(p_1,\ldots,p_n)\in \Sigma^n$ to denote a tuple of points, and write e.g. $\omega(p_0,J)$ to denote $\omega(p_0,p_1,\ldots,p_n)$. We also use $J_0$ to denote $(p_0,\ldots,p_n)$. Furthermore, since the multidifferentials we consider are are symmetric, we often abuse notation and write $J$ to denote the \emph{set} of (possibly coincident) points $\{p_1,\ldots,p_n\}$ as a subset of $\Sigma$.

Finally, we often use the word \emph{residue-free} to mean that a given one-form (or a multidifferential considered in a fixed variable) has zero residue at every point.

\subsection{Unrefined topological recursion}

The initial data of the \emph{unrefined} topological recursion, i.e., the standard Chekhov-Eynard-Orantin topological recursion \cite{EO,EO2,CEO}\footnote{In the present paper, we call the Chekhov-Eynard-Orantin topological recursion the unrefined topological recursion in order to emphasise comparisons with the refined topological recursion.}, is a so-called spectral curve which is a Riemann surface equipped with additional geometric data. 

\begin{definition}\label{def:curve}
A \emph{(global) spectral curve} $\mathcal{S}$ is a quadruple $\mathcal{ S}=(\Sigma,x,y,B)$ where
\begin{itemize}[label=\raisebox{0.25ex}{\tiny$\bullet$}]
\itemsep2pt
\item $\Sigma$ is a compact Riemann surface
\item $x,y:\Sigma\rightarrow\mathbb{P}^1$ are two meromorphic functions such that $dx$ and $dy$ do not vanish simultaneously.
\item $B$ is a symmetric bidifferential with a double pole on the diagonal with biresidue 1.\footnote{This means that in {any} local coordinate $z$, $B(z_1,z_2)=\frac{dz_1dz_2}{(z_1-z_2)^2}+{O}(1)$ in the limit $z_1\to z_2$.}
\end{itemize}
\end{definition}

$B$ is usually chosen to be the so-called \emph{Bergman kernel}, or the \emph{fundamental bidifferential} which is canonically associated to a Riemann surface with Torelli marking (see e.g. \cite{EO}). We call any zero of $dx$ or pole of order 3 or higher of $dx$ a \emph{ramification point}, and denote by $\mathcal{R}$ the set of all ramification points. We will furthermore make the assumption that all ramification points $r$ are \emph{simple}, so that $x$ is of degree two at $r$\footnote{See e.g. \cite{BE1} for spectral curves which violate the assumption and the corresponding recursion.}. In a small neighbourhood $U_r\subset\Sigma$ of each ramification point $r\in\mathcal{R}$, there exists a canonical local involution map $\sigma_r$ such that $x(p)=x(\sigma_r(p))$ and $y(p)\neq y(\sigma_r(p))$ where $p\in U_r$. Then for each ramification point, we define the associated \emph{recursion kernel} as
\begin{equation}
{\rm K}_r(p_0,p):=\frac12\frac{\int_{\sigma_r(p)}^{p}B(p_0,\cdot)}{\Delta y(p)\cdot dx(p)},\label{kernel}
\end{equation}
where $p_0\in\Sigma$ and $p\in U_r\subset\Sigma$, and we write \begin{equation}
\Delta y(p):=y(p)-y(\sigma_r(p)).
\end{equation}
Note that the recursion kernel is uniquely determined near each ramification point, but only locally defined in general. Finally, we call a ramification point $r\in\mathcal{R}$ \emph{ineffective} if $\Delta y\!\cdot \!dx$ is singular at $r$, and \emph{effective} otherwise. The set of effective ramification points is denoted by $\mathcal{R}^*\subset \mathcal{R}$.

In order to define the recursion, various additional conditions are typically imposed on the spectral curve in the literature. In  the present paper, we will assume the following\footnote{For the higher degree case, see e.g. \cite{BBCCN,BE1}}:
\begin{assumption}
\label{ass:tr}
\,
\begin{itemize}[label=\raisebox{0.25ex}{\tiny$\bullet$}]
    \item The function field $\mathbb{C}(\Sigma)$ is generated by $x$ and $y$,
    \item  All ramification points $r\in\mathcal{R}$ are simple i.e. $x$ is of degree $2$ at $r$,
    \item For any two distinct ramification points $r_1, r_2$, $x(r_1)\neq x(r_2)$,
    \item $\Delta y \!\cdot\! dx$ has at most a double zero at each ramification point.
\end{itemize}
\end{assumption}

The unrefined topological recursion \cite{EO,EO2,CEO} provides a recipe for computing an infinite sequence of multidifferentials $\omega_{g,n+1}$ on $(\Sigma)^{n+1}$ for all $g,n\in\mathbb{Z}_{\geq0}$ with $2g+n\geq2$. For an appropriate choice of a spectral curve, these $\omega_{g,n+1}$ become generating functions of some enumerative invariants such as Gromov-Witten invariants. We refer to the readers \cite{E} for a summary of achievements of the topological recursion. Let us now give the definition:

\begin{definition}[\cite{CEO,EO,EO2}]\label{def:CEO}
Given a spectral curve $\mathcal{S}$ satisfying Assumption \ref{ass:tr}, the \emph{(unrefined) topological recursion} is a recursive construction of an infinite sequence of meromorphic multidifferentials $\omega_{g,n+1}$ on $(\Sigma)^{n+1}$ for $g,n\in\mathbb{Z}_{\geq0}$ with $2g+n\geq0$ by the initial data:
\begin{equation}
    \omega_{0,1}(p_0):=y(p_0)dx(p_0) \qquad \omega_{0,2}(p_0,p_1):=B(p_0,p_1),
\end{equation}
together with the following recursive formula:

\begin{align}
\omega_{g,n+1}(p_0,J)&=\sum_{r\in\mathcal{R}}\,\underset{p=r}{\text{Res}}\,\,{\rm K}_r(p_0,p)\cdot\text{Rec}_{g,n+1}(p,J),
\end{align}
where
\begin{align}
\text{Rec}_{g,n+1}(p,J)&=\omega_{g-1,n+2}(p,\sigma_r(p),J)\, +\!\!\!\sum^{*}_{\substack{g_1+g_2=g \\ J_1\sqcup J_2=J}}\omega_{g_1,|J_1|+1}(p,J_1)\cdot\omega_{g_2,|J_2|+1}(\sigma_r(p),J_2), \label{TR}
\end{align}
with $J=(p_1,...,p_n)$ denoting a point in $(\Sigma)^n$, and the $*$ in the sum means that we remove terms involving $\omega_{0,1}$.
\end{definition}
\begin{remark}Note that the formula is written in terms of residues around ramification points, which can equivalently be thought of as an integral along an appropriate contour. Thus, the topological recursion formula can be thought of as genuine integration of ${\rm Rec}_{g,n}$ against a kernel. More generally, we will occasionally think of and refer to expressions inside residues as integrands. 
\end{remark}
In their seminal work, Eynard-Orantin established several fundamental properties of the $\omega_{g,n+1}$:

\begin{theorem}[\cite{EO}]\label{thm:TR}
For any $2g+n\geq2$, the multidifferentials constructed from the unrefined topological recursion satisfy the following properties:
\begin{enumerate}[label={\bf TR\arabic*:}, ref=\arabic*]
\itemsep2pt
\item\label{TR1} The multidifferential $\omega_{g,n+1}$ is symmetric;
\item\label{TR2} All poles of $\omega_{g,n+1}$ (in any variable) lie in $\mathcal{R}^*\subset \mathcal{R}$;
\item\label{TR3} At any $o\in \Sigma$, $\omega_{g,{n+1}}$ is residue-free in the first (thus, any) variable:
\begin{equation}
    \underset{p=o}{{\rm Res}}\,\omega_{g,n+1}(p,J)=0;
\end{equation}
\item \label{TR4} For $n>0$, we have
\begin{equation}
(2g-2+n)\,\omega_{g,n}(J)=-\sum_{r\in\mathcal{R}^*}\underset{p=r}{{\rm Res}}\,\Phi(p)\cdot\omega_{g,n+1}(p,J).\label{inverse}
\end{equation}
\end{enumerate}
\end{theorem}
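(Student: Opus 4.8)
The plan is to prove all four properties simultaneously by strong induction on $2g-2+n\geq 0$, establishing them at each stage in the order \textbf{TR3} (in the first variable), \textbf{TR2}, \textbf{TR1}, and finally \textbf{TR4} together with the extension of \textbf{TR3} to the remaining variables. This order is forced by the logical dependencies: the symmetry argument will rely on already knowing both the pole structure and residue-freeness in $p_0$, while the dilaton-type identity \textbf{TR4} and the ``thus any variable'' clause of \textbf{TR3} will in turn use symmetry. The base cases are $\omega_{0,3}$ and $\omega_{1,1}$, which are checked directly from the recursion formula and the explicit forms of ${\rm K}_r$, $\omega_{0,1}=ydx$, and $\omega_{0,2}=B$.

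First I would dispose of \textbf{TR3} in the variable $p_0$, which is essentially structural and requires no induction. The only dependence of $\omega_{g,n+1}(p_0,J)$ on $p_0$ enters through the recursion kernel ${\rm K}_r(p_0,p)$, and by \eqref{kernel} this is, as a $1$-form in $p_0$, a combination of integrals of the bidifferential $B(p_0,\cdot)$. Since $B$ has only a double pole on the diagonal and is therefore residue-free in $p_0$, so is ${\rm K}_r(p_0,p)$, and taking the residue in the variable $p$ does not affect the $p_0$-dependence; hence $\omega_{g,n+1}$ is residue-free in $p_0$ at every point of $\Sigma$. For \textbf{TR2} I would argue inductively: assuming the lower $\omega$'s have all their poles at $\mathcal{R}^*$, the integrand $\text{Rec}_{g,n+1}(p,J)$ has poles in $p$ only at $\mathcal{R}$ and at coincidences $p=p_j$, so the residue at ramification points is well-defined, and a pole of the output in $p_0$ can only be produced by the $p_0$-poles of ${\rm K}_r(p_0,p)$, which are located at the ramification points themselves. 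The contributions of ineffective ramification points vanish because $\Delta y\cdot dx$ is singular there, localizing all poles to $\mathcal{R}^*$.

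The main obstacle is \textbf{TR1}, the symmetry under exchanging $p_0$ with a variable in $J$ (symmetry within $J$ being manifest from the recursion). It suffices to show $\omega_{g,n+1}(p_0,p_1,J')=\omega_{g,n+1}(p_1,p_0,J')$ with $J'=(p_2,\ldots,p_n)$. Here I would follow the residue-theorem strategy: insert the recursion for $\omega_{g,n+1}(p_0,p_1,J')$ and observe that, by \textbf{TR2} and the inductive hypotheses, the integrand ${\rm K}_r(p_0,p)\cdot\text{Rec}(p,p_1,J')$ is a meromorphic $1$-form in $p$ whose only poles off $\mathcal{R}$ sit at $p=p_0$ and $p=p_1$. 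Since $\Sigma$ is compact the sum of all residues vanishes, so I may trade the residues at $\mathcal{R}$ for (minus) those at $p_0$ and $p_1$; the residue at $p=p_1$ is governed by the double pole of the $\omega_{0,2}=B$ factor hidden inside $\text{Rec}$. A careful evaluation of these $B$-generated residues, using residue-freeness \textbf{TR3} in $p_0$ and repeated application of the inductive symmetry, should recast the expression in a form manifestly invariant under $p_0\leftrightarrow p_1$. The real work lies in the bookkeeping of the split sum over $(g_1,g_2)$ and $J_1\sqcup J_2$ and in isolating exactly the terms that produce the relevant residues; this is also precisely the content of the underlying loop equations, and could alternatively be organized along those lines. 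Once \textbf{TR1} is known, residue-freeness extends from $p_0$ to every variable, completing \textbf{TR3}.

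Finally I would derive \textbf{TR4} from \textbf{TR1}--\textbf{TR2}. Substituting the recursion into the right-hand side and integrating by parts using $d\Phi=ydx=\omega_{0,1}$, the factor of $\Phi$ combines with $\Delta y\cdot dx$ appearing in ${\rm K}_r$ so that the weighted residue collapses onto lower multidifferentials; symmetry and the restriction of poles to $\mathcal{R}^*$ guarantee that no spurious boundary contributions arise, and tracking how many times each $\omega_{g,n}$ is produced yields the combinatorial prefactor $2g+n-2$. This step is computational but routine once symmetry is established, so I expect \textbf{TR1} to be the genuinely difficult part of the argument.
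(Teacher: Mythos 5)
First, a remark on the comparison itself: the paper does not prove Theorem~\ref{thm:TR} at all --- it imports it from Eynard--Orantin, citing \cite[Theorem 4.6]{EO} for \textbf{TR1}, \cite[Theorem 4.7]{EO} for \textbf{TR4}, and noting that \textbf{TR2}, \textbf{TR3} are essentially immediate from the recursion \eqref{TR}. Your outline shares the overall architecture of that argument (induction on $2g+n$, easy pole and residue statements, symmetry as the hard step, \textbf{TR4} by an integration-by-parts computation), but the route you propose for the hard step has a genuine gap. Your proof of \textbf{TR1} rests on applying the residue theorem in $p$ to the integrand ${\rm K}_r(p_0,p)\cdot{\rm Rec}_{g,n+1}(p,p_1,J')$, trading the residues at $\mathcal{R}$ for residues at the marked points. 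For that, the integrand must be a globally defined meromorphic $1$-form in $p$ on $\Sigma$. But Theorem~\ref{thm:TR} concerns the unrefined recursion on an \emph{arbitrary} spectral curve satisfying Assumption~\ref{ass:tr} --- any genus, any degree of the cover $x$ --- and in that generality the involution $\sigma_r$, hence the kernel \eqref{kernel} and ${\rm Rec}_{g,n+1}$ (which contains $\sigma_r(p)$), exist only in a neighbourhood $U_r$ of each ramification point. There is no global form whose residues sum to zero, so the contour-trading step is unavailable. That device is legitimate precisely for curves with a \emph{global} involution, i.e. degree two, which is exactly why the present paper can use it in its refined genus-zero degree-two setting (Lemma~\ref{lem:contour} and the proof of Proposition~\ref{thm:sym}), but it cannot prove the general unrefined statement. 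The proof of \cite[Theorem 4.6]{EO} instead applies the recursion a second time, in $p_1$, inside the $p$-residue, and then \emph{exchanges the order of the two ramification-point residues} --- a purely local operation whose commutator consists of collision terms generated by the double pole of $B$, which are then disposed of using the loop equations. Your sketch becomes correct if reorganized along those lines (or if the theorem is restricted to global-involution curves).

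Two further inaccuracies, both repairable but worth flagging. (i) ${\rm K}_r(p_0,p)$ is \emph{not} residue-free in $p_0$: its numerator $\int_{\sigma_r(p)}^{p}B(p_0,\cdot)$ has simple poles at $p_0=p$ and at $p_0=\sigma_r(p)$, with residues $+1$ and $-1$. Residue-freeness of $\omega_{g,n+1}$ in $p_0$ (\textbf{TR3}) holds because after taking the residue at $p=r$ these two poles collide at $p_0=r$ and their residues cancel, not because the kernel has no $p_0$-residues; the inference ``$B$ residue-free $\Rightarrow$ its integral is residue-free'' is false in general. (ii) Your pole inventory in the symmetry step is incomplete: besides $p=p_0$ and $p=p_1$, the integrand has $p$-poles at $\sigma_r(p_0)$, at every other $p_j\in J'$, and at $\sigma_r(p_j)$, coming from the terms $\omega_{0,2}(p,p_j)$ and $\omega_{0,2}(\sigma_r(p),p_j)$ inside ${\rm Rec}$. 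Even in the global-involution case this affects the bookkeeping --- compare the appearance of $\sigma(J_0)$ in the refined analogue, Theorem~\ref{thm:RTR}.
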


TR\ref{TR1} is in fact nontrivial at first glance from the recursion formula \eqref{TR} because the $p_0$-dependence is of a different nature than the other variables $J$, but it is proven\footnote{Another, more general, proof is to consider it in a relatively new formalism, so-called Airy structures proposed by Kontsevich and Soibelman \cite{KS2} (see also \cite{ABCD}).} in \cite[Theorem 4.6]{EO}. TR\ref{TR2} and TR\ref{TR3} are almost straightforward from the recursion \eqref{TR}, and TR\ref{TR4} is proven in \cite[Theorem 4.7]{EO}. However, as we will see below, TR\ref{TR2} no longer holds in the refined setting, and the remaining properties require a more careful analysis as a result. 

In light of TR\ref{TR4}, it is natural to wonder what happens if we consider \eqref{inverse} with $n=0$ and $g>1$, that is, to compute $\omega_{g,0}$. The right hand side is still a well-defined invariant, and this motivates the definition of the \emph{free energy} as follows \cite{EO}:
\begin{definition}\label{def:Fg}
For $g\geq 2$, the \emph{genus g free energy $F_g$ of the unrefined topological recursion}\footnote{$F_0$ and $F_1$ are defined in different ways \cite{CEO,EO,EO2}.} is defined by
\begin{equation}
F_g=\omega_{g,0}:=\frac{1}{2-2g}\sum_{r\in\mathcal{R}^*}\underset{p=r}{\text{Res}}\,\Phi(p)\!\cdot\omega_{g,1}(p),\label{TRFg}
\end{equation}
where $\Phi$ is any primitive of $ydx$.
\end{definition} 
\noindent Note that TR\ref{TR3} ensures that $F_g$ is indeed independent of the choice of primitive $\Phi$.

\subsection{Refined topological recursion}

\subsubsection{Refined spectral curves and the refined recursion}\label{subsubsec:RTR}

In this paper, we consider a more specific setting than in the previous section. To define the refined topological recursion, we will first specialize to a nice class of curves (in fact, we will presently only consider the case in which the underlying curve $\Sigma$ has genus zero). As before, $\Sigma$ should be equipped with two functions $x$ and $y$. Then in addition to Assumption \ref{ass:tr}, we will impose that
\begin{assumption}
\label{ass:rtr}
 \,
\begin{itemize}[label=\raisebox{0.25ex}{\tiny$\bullet$}]
    \item The meromorphic functions $x$ and $y$ satisfy $y^2 = Q(x)$ for rational $Q$ which is not a perfect square.
    \item $y dx$ does not vanish anywhere on $\Sigma\setminus\mathcal{R}$.
\end{itemize}
\end{assumption}

Thanks to the first statement
, $\Sigma$ is an irreducible algebraic curve defined by $y^2-Q(x)=0$. In particular the involution operator $\sigma$ is \emph{globally} defined by exchanging sheets. The map $x:\Sigma\mapsto\mathbb{P}^1$ forms a branched double cover of a ``base curve'' $\mathbb{P}^1$, and we define and denote ramification points in the same manner as before. 

\begin{remark}\label{rem:ass}
More generally, our results hold replacing $y^2-Q(x)$ by any irreducible polynomial $P(x,y)$ of degree two in $y$, as long as the genus is zero and the remaining assumptions are satisfied. In this case, $y$ should be replaced by $\frac{1}{2}\Delta y$ at various points throughout the text, together with slight extra care for the quantum curve (see Remark \ref{rem:generalcaseqc}). For simplicity of notation, and since any degree two curve can be brought into the form of Assumption \ref{ass:rtr}, we assume this form henceforth.
\end{remark}

 We let $\mathcal{P}$ denote the set of poles of $ydx$, and let $\widetilde{\mathcal{P}}$ be the set of all zeroes and poles of $y$ that are not in $\mathcal{R}$ --- in particular, under Assumption \ref{ass:tr}, $\widetilde{\mathcal{P}}\subset \mathcal{P}$. Thanks to the involution, $\mathcal{P}$ carries a $\mathbb{Z}_2$ action, which is free when restricted to $\widetilde{\mathcal{P}}$. Thus $\widetilde{\mathcal{P}}$ can be (non-uniquely) decomposed into ``positive'' and ``negative'' subsets $\widetilde{\mathcal{P}}_{\pm}$ by choosing an element from each orbit. We fix a choice of such a decomposition as part of our initial data, and to each $p\in \widetilde{\mathcal{P}}_+$ we assign a parameter $\mu_p\in\mathbb{C}$. It is convenient to assemble them into one package as a (complex) divisor $D({\bm \mu})$ given by:

\begin{equation}
    D({\bm \mu}):=\sum_{p\in\widetilde{\mathcal{P}}_+}\mu_p[p]\label{deta}
\end{equation}
It turns out that this divisor is an additional degree of freedom in the initial data for the recursion, and $\omega_{g,n+1}$ in general depend on ${\bm \mu}$. By convention, we also define $\mu_{p}:=-\mu_{\sigma(p)}$ for $p\in\widetilde{\mathcal{P}}_-$.

\begin{definition}\label{def:Rcurve}
A \emph{genus zero degree two refined spectral curve} $\mathcal{S}^{\bm \mu}$ is a quintuple \newline $\mathcal{S}^{\bm \mu}=(\Sigma,x,y,B,D({\bm \mu}))$ such that:
\begin{itemize}[label=\raisebox{0.25ex}{\tiny$\bullet$}]
    \item The compact Riemann surface $\Sigma=\mathbb{P}^1$, 
    \item Meromorphic functions $x,y:\Sigma\rightarrow \mathbb{P}^1$ whose differentials $dx$, $dy$ do not simultaneously vanish, and satisfying Assumption \ref{ass:rtr},
    \item $B$ is the unique symmetric bidifferential whose only pole is a double pole on the diagonal with biresidue 1,
    \item A choice of complex divisor $D({\bm \mu})$ supported at a choice of $\widetilde{\mathcal{P}}_+$.
\end{itemize}
\end{definition}

\begin{remark} It is not much more difficult to define the higher genus case, but we avoid it in the present work, since proving the analogue of Theorem \ref{thm:RTR} below is more involved. We leave the treatment of such curves to future work.\end{remark}

We are now ready to define the refined topological recursion. Its first appearance was in Chekhov-Eynard \cite{CE} in the context of $\beta$-deformed matrix models, and several papers such as \cite{BMS,MS,C,M} discuss its properties and applications. Our formulation is purely in terms of (genus zero, degree two) refined spectral curves. We will prove that all the expected properties (with slight modifications to TR\ref{TR2} and TR\ref{TR4}) continue to hold for genus zero degree two refined spectral curves in the coming sections. We note that although we take $\beta\in\mathbb{C}^*$, it is only the combination $\mathscr{Q}=\beta^{\frac12}-\beta^{-\frac12}$ which appears in the recursion itself, so we write everything in terms of $\mathscr{Q}$ until necessary, when we define the wavefunction in Section \ref{sec:QC}.

\begin{definition}\label{def:RTR}
Fix $\mathscr{Q}\in\mathbb{C}$. Given a genus zero degree two refined spectral curve $\mathcal{S}^{\bm \mu}=(\Sigma,x,y,B,D({\bm \mu}))$ satisfying Assumption \ref{ass:tr}, the \emph{refined topological recursion} is a recursive construction of an infinite sequence of multidifferentials $\omega_{g,n+1}$ for $2g,n\in\mathbb{Z}_{\geq0}$ with $2g+n\geq2$ from the initial data:
\allowdisplaybreaks[0]
\begin{equation}
    \omega_{0,1}(p_0):=y(p_0)dx(p_0), \qquad \omega_{0,2}(p_0,p_1):=B(p_0,p_1),
\end{equation}
\begin{equation}\label{eq:half1def}
    \omega_{\frac12,1}(p_0):=-2\mathscr{Q}\left(\sum_{r\in\mathcal{R}}\underset{p=r}{\text{Res}}+\underset{p=\sigma(p_0)}{\text{Res}}+\sum_{r\in\widetilde{\mathcal{P}}_+}\mu_r\cdot\underset{p=r}{ \text{Res}}\,\right){\rm K}(p_0,p)\! \cdot\! dx(p)\! \cdot \! dy(p).
\end{equation}
\allowdisplaybreaks[1]

\noindent and the following formula:

\begin{equation}
\omega_{g,n+1}(p_0,J)=-2\left(\sum_{r\in\mathcal{R}}\underset{p=r}{\text{Res}}+\sum_{r\in\sigma(J_0)}\underset{p=r}{\text{Res}}+\sum_{r\in\widetilde{\mathcal{P}}_+}\underset{p=r}{\text{Res}}\right){\rm K}(p_0,p)\!\cdot\text{Rec}_{g,n+1}^{\mathscr{Q}}(p,J),\label{eq:RTR}
\end{equation}


\noindent where
\begin{align}
    &\text{Rec}^\mathscr{Q}_{0,3}(p,p_1,p_2)&=&\;\;\frac12\Delta\omega_{0,2}(p,p_1)\cdot \Delta\omega_{0,2}(p,p_2),\label{Rec0,3}\\
&\text{Rec}^\mathscr{Q}_{\frac12,2}(p,p_1)&=&\;\;\Delta\omega_{0,2}(p,p_1)\cdot \omega_{\frac12,1}(p)-\mathscr{Q}\,dx(p)\cdot d_p\left(\frac{ \omega_{0,2}(\sigma(p),p_1)}{dx(p)}\right),\label{Rec1/2,2}\\
&\text{Rec}^\mathscr{Q}_{1,1}(p)&=&\;\;\omega_{\frac12,1}(p)\cdot\omega_{\frac12,1}(p)-\omega_{0,2}(p,\sigma(p))+\mathscr{Q}\, dx(p)\cdot d_{p}\left(\frac{\omega_{\frac12,1}(p)}{dx(p)}\right),\label{Rec1,1}
\end{align}

\noindent and for $2g+n\geq3$

\begin{align}
\text{Rec}^\mathscr{Q}_{g,n+1}(p,J)=&\sum_{i=1}^n\Delta\omega_{0,2}(p,p_i) \cdot \omega_{g,n}(p,\widehat{J}_i)+\sum^{**}_{\substack{g_1+g_2=g \\ J_1\sqcup J_2=J}} \omega_{g_1,n_1+1}(p,J_1) \cdot  \omega_{g_2,n_2+1}(p,J_2) \nonumber\\
&+\omega_{g-1,n+2}(p,p,J)+\mathscr{Q}\,dx(p)\cdot d_p\left(\frac{ \omega_{g-\frac12,n+1}(p,J)}{dx(p)}\right),\label{RecQ}
\end{align}

\noindent where $J:=(p_1,...,p_n)$ denotes a point in $(\Sigma)^n$, $J_0=(p_0,p_1\ldots,p_n)\in(\Sigma)^{n+1}$, and $\sigma(J_0):=(\sigma(p_0),\sigma(p_1),...,\sigma(p_n))$, $\widehat{J}_i:=J\backslash \{p_i\}$, the $**$ in the sum means that we remove terms involving $\omega_{0,1}$ or $\omega_{0,2}$, $d_p$ is the exterior derivative with respect to $p$, and 
\begin{equation}
    \Delta\omega_{0,2}(p,p_i):=\omega_{0,2}(p,p_i)-\omega_{0,2}(\sigma(p),p_i).
\end{equation}
\end{definition}
\begin{remark}Let us make a few comments about the differences between the unrefined and the refined topological recursion. First, we emphasize that the refinement parameter $\mathscr{Q}$ does not appear in the definition of refined spectral curve, and it becomes necessary only when we define the refined recursion formula \eqref{eq:RTR}. Second, there is no involution operator $\sigma$ in $\text{Rec}_{g,n+1}^\mathscr{Q}(p,J)$ above unlike $\text{Rec}_{g,n+1}(p,J)$ in \eqref{TR}, and we note that this is not a typo.
Third, $\text{Rec}^\mathscr{Q}_{g,n+1}(p,J)$ with $2g+n=2$ are slightly different from $\text{Rec}^\mathscr{Q}_{g,n+1}(p,J)$ with $2g+n>2$ due to a non-universal behaviour of $\omega_{0,2}$. Finally and most importantly, the recursion formula does not boil down to residue computations at ramification points $\mathcal{R}$ in contrast to the unrefined recursion formula \eqref{TR}, but rather we need to additionally take residues at $\sigma(J_0)$ and at $\widetilde{\mathcal{P}}_+$.
\end{remark}

Although it will not play a role in the rest of the present paper, we note that we may define the free energy by the exact same formula as the unrefined case:
\begin{definition}\label{def:rFg}
For $g\geq 2$, the \emph{genus g free energy $F_g$ of the refined topological recursion} is defined by
\begin{equation}
F_g=\frac{1}{2-2g}\sum_{r\in\mathcal{R}^*}\underset{p=r}{\text{Res}}\,\Phi(p)\!\cdot\omega_{g,1}(p),\label{RTRFg}
\end{equation}
where $\Phi$ is any primitive of $ydx$.
\end{definition} 
\noindent The fact that $F_g$ is well-defined (independent of the choice of $\Phi$) will follow from the refined analogue of TR\ref{TR3} proved in the next section. 

\subsubsection{Reduction to the unrefined case}
Due to our slightly different formulation of the refined case, our first task is to show that when $\mathscr{Q}=0$, the refined topological recursion boils down to the unrefined topological recursion. We introduce the following lemma which we use several times in the forthcoming sections:
\begin{lemma}\label{lem:contour}
Given a compact Riemann surface $\Sigma$ equipped with an involution $\sigma:\Sigma\rightarrow\Sigma$, let $\mathscr{R}$ denote the set of fixed points of $\sigma$. Suppose $\omega$ is any meromorphic differential invariant under $\sigma$, and let ${\mathscr{P}}$ be the set of all poles of $\omega$ that are not in $\mathscr{R}$. $\mathscr{P}$ admits a (non-unique) decomposition ${\mathscr{P}}={\mathscr{P}}_+\sqcup{\mathscr{P}}_-$. Given such a choice,   
we have
\begin{equation}
    2\left(\sum_{r\in\mathscr{R}}\underset{p=r}{{\rm Res}}+\sum_{r\in{\mathscr{P}}_+}\underset{p=r}{{\rm Res}}\right)\omega(p)=\sum_{r\in\mathscr{R}}\underset{p=r}{{\rm Res}}\,\omega(p).
\end{equation}
\end{lemma}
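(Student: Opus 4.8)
The plan is to derive the identity purely from the global residue theorem on the compact Riemann surface $\Sigma$, together with the $\sigma$-invariance of $\omega$. The first step is to note that, since $\mathscr{P}$ is by definition the set of \emph{all} poles of $\omega$ lying outside the fixed-point locus $\mathscr{R}$, every pole of $\omega$ belongs to $\mathscr{R}\cup\mathscr{P}_+\cup\mathscr{P}_-$ (points of $\mathscr{R}$ at which $\omega$ is regular simply contribute zero residue, so it is harmless to sum over all of $\mathscr{R}$). The residue theorem --- that the sum of all residues of a meromorphic $1$-form on a compact Riemann surface vanishes --- then gives
\begin{equation}
\sum_{r\in\mathscr{R}}\underset{p=r}{{\rm Res}}\,\omega(p)+\sum_{r\in\mathscr{P}_+}\underset{p=r}{{\rm Res}}\,\omega(p)+\sum_{r\in\mathscr{P}_-}\underset{p=r}{{\rm Res}}\,\omega(p)=0.
\end{equation}

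The heart of the argument is to show that the two sums over $\mathscr{P}_+$ and $\mathscr{P}_-$ coincide. Here I would use that $\omega$ has no poles fixed by $\sigma$ outside $\mathscr{R}$, so $\sigma$ acts freely on $\mathscr{P}$, pairing each $p\in\mathscr{P}_+$ with $\sigma(p)\in\mathscr{P}_-$; thus the chosen decomposition is orbit-wise, $\mathscr{P}_-=\sigma(\mathscr{P}_+)$. Since residues are invariant under pullback by a biholomorphism, one has $\underset{p}{{\rm Res}}\,\sigma^*\omega=\underset{\sigma(p)}{{\rm Res}}\,\omega$, and the hypothesis $\sigma^*\omega=\omega$ then yields $\underset{p}{{\rm Res}}\,\omega=\underset{\sigma(p)}{{\rm Res}}\,\omega$ for each $p$. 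Summing over $p\in\mathscr{P}_+$ and reindexing by $q=\sigma(p)$ gives $\sum_{r\in\mathscr{P}_+}{\rm Res}\,\omega=\sum_{r\in\mathscr{P}_-}{\rm Res}\,\omega$.

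Finally I would substitute this equality into the residue-theorem identity to obtain
\begin{equation}
\sum_{r\in\mathscr{R}}\underset{p=r}{{\rm Res}}\,\omega(p)+2\sum_{r\in\mathscr{P}_+}\underset{p=r}{{\rm Res}}\,\omega(p)=0,
\end{equation}
which rearranges directly into the claimed identity after moving one copy of $\sum_{r\in\mathscr{R}}{\rm Res}\,\omega$ to the other side. There is no serious obstacle here: the only genuinely delicate points are the residue-pullback relation for $\sigma$ and the bookkeeping confirming that $\sigma$ has no fixed poles in $\mathscr{P}$ (so that the $\mathbb{Z}_2$-action on $\mathscr{P}$ is free and the decomposition $\mathscr{P}=\mathscr{P}_+\sqcup\mathscr{P}_-$ is orbit-wise); beyond these, the computation is routine.
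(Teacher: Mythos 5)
Your proof is correct and follows essentially the same route as the paper's: the $\sigma$-invariance of $\omega$ gives $\sum_{r\in\mathscr{P}_+}\mathrm{Res}\,\omega=\sum_{r\in\mathscr{P}_-}\mathrm{Res}\,\omega$, and the vanishing of the total sum of residues on the compact surface $\Sigma$ then yields the identity. The paper's argument is simply a terser version of yours, leaving implicit the pullback relation $\mathrm{Res}_{p}\,\sigma^*\omega=\mathrm{Res}_{\sigma(p)}\,\omega$ and the orbit-wise nature of the decomposition $\mathscr{P}_-=\sigma(\mathscr{P}_+)$, both of which you correctly spell out.
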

\begin{proof}

Since $\omega(p)$ is invariant under the involution $\sigma$, we have:
\begin{equation}
    \sum_{r\in{\mathscr{P}}_+}\underset{p=r}{{\rm Res}}\,\omega(p)=\sum_{r\in{\mathscr{P}}_-}\underset{p=r}{{\rm Res}}\,\omega(p).
\end{equation}
Furthermore, the sum of all residues of a meromorphic differential is zero, hence
\begin{equation}
    \sum_{r\in\mathscr{R}}\underset{p=r}{{\rm Res}}\,\omega(p)+2\sum_{r\in{\mathscr{P}}_+}\underset{p=r}{{\rm Res}}\,\omega(p)=0,
\end{equation}
which gives the result.
\end{proof}

\begin{proposition}[Reduction to the unrefined topological recursion]\label{prop:Q=0RTR}
Given a genus zero degree two refined spectral curve satisfying \emph{Assumption {\rm \ref{ass:tr}}}, {\rm Definition~\ref{def:RTR}}  coincides with {\rm Definition~\ref{def:CEO} when $\mathscr{Q}=0$}.
\end{proposition}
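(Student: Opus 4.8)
The plan is to induct on $2g+n$ and split the claim into two parts: that all half-integer invariants vanish when $\mathscr{Q}=0$, and that the integer ones reproduce Definition~\ref{def:CEO}. The first part is essentially free. Since $\omega_{0,1},\omega_{0,2}$ are identical in both definitions, I only need the recursive step. Setting $\mathscr{Q}=0$ in \eqref{eq:half1def} gives $\omega_{\frac12,1}\equiv 0$, and annihilates every $\mathscr{Q}\,dx(p)\cdot d_p(\cdots)$ term in \eqref{Rec1/2,2}--\eqref{RecQ}. For half-integer $g$, every surviving term of $\mathrm{Rec}^0_{g,n+1}(p,J)$ then contains a factor whose first index is a half-integer and whose $2g+n$ is strictly smaller: in $\sum^{**}$ exactly one of $g_1,g_2$ is a half-integer, while $\omega_{g,n}(p,\widehat J_i)$ and $\omega_{g-1,n+2}(p,p,J)$ are themselves half-integer. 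By the inductive hypothesis all such factors vanish, so $\omega_{g,n+1}=0$; directly from \eqref{Rec1/2,2} one also gets $\omega_{\frac12,2}=0$ since $\omega_{\frac12,1}=0$.

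For integer $g$ I would show \eqref{eq:RTR} reduces to the formula of Definition~\ref{def:CEO}. By induction every lower multidifferential already agrees with its unrefined counterpart, so I may freely invoke TR\ref{TR1} and TR\ref{TR2}: they are symmetric with poles (in each variable) only along $\mathcal{R}^*$. The geometric input is that, because $\sigma$ is now a \emph{global} involution, the kernel is invariant in its second argument, ${\rm K}(p_0,\sigma(p))={\rm K}(p_0,p)$ --- both $\Delta y\cdot dx$ in the denominator and the line integral $\int_{\sigma(p)}^{p}B(p_0,\cdot)$ in the numerator change sign under $p\mapsto\sigma(p)$. Decomposing $\mathrm{Rec}^0_{g,n+1}$ into its $\sigma$-even and $\sigma$-odd parts in $p$, the $1$-form ${\rm K}\cdot\mathrm{Rec}^0_{g,n+1}$ splits into a $\sigma$-invariant and a $\sigma$-anti-invariant piece.

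The $\sigma$-invariant (even) piece is where Lemma~\ref{lem:contour} does the work: its poles away from the fixed-point set $\mathcal{R}$ lie only in the orbits $\{p_k,\sigma(p_k)\}_{k=0}^{n}$ (from the $\Delta\omega_{0,2}$ factors and the numerator of ${\rm K}$) and in $\widetilde{\mathcal{P}}$ (from the denominator of ${\rm K}$). Taking $\mathscr{P}_+=\sigma(J_0)\cup\widetilde{\mathcal{P}}_+$ as orbit representatives, Lemma~\ref{lem:contour} converts the prefactor $-2$ together with the residues at $\sigma(J_0)$ and $\widetilde{\mathcal{P}}_+$ into a single sum $-\sum_{r\in\mathcal{R}}\underset{p=r}{\mathrm{Res}}$ of the even part. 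For the $\sigma$-anti-invariant (odd) piece the residues at the fixed points $\mathcal{R}$ vanish automatically, so only residues at $\sigma(J_0)$ and $\widetilde{\mathcal{P}}_+$ survive; these come from the double pole of $\omega_{0,2}(\sigma(p),p_i)$ inside $\Delta\omega_{0,2}(p,p_i)\cdot\omega_{g,n}(p,\widehat J_i)$, and extracting them at $p=\sigma(p_i)$ reinstates precisely the $\sigma(p)$-arguments that distinguish $\mathrm{Rec}_{g,n+1}$ from $\mathrm{Rec}^0_{g,n+1}$.

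I expect the genuine obstacle to be exactly this last reconciliation: the refined integrand evaluates every multidifferential at the single point $p$ using the symmetric combinations $\Delta\omega_{0,2}$, whereas the unrefined $\mathrm{Rec}_{g,n+1}$ of Definition~\ref{def:CEO} splits its arguments between $p$ and $\sigma(p)$ and uses $\omega_{0,2}$ directly. Matching them requires tracking, with correct signs and the factor $-2$, how the double pole of $\omega_{0,2}$ converts a residue at $\sigma(p_i)$ into an evaluation regenerating $\omega_{g,n}(\sigma(p),\ldots)$ and $\omega_{g-1,n+2}(p,\sigma(p),J)$, while checking that the leftover $\sigma$-symmetric combinations such as $\omega_{0,2}(p,p_i)+\omega_{0,2}(\sigma(p),p_i)$ are residueless at $\mathcal{R}$ because they vanish to sufficient order at ramification points. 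It is cleanest to dispatch the base cases first: for $\omega_{1,1}$ one has $\mathrm{Rec}^0_{1,1}=-\omega_{0,2}(p,\sigma(p))$ by \eqref{Rec1,1}, and for $\omega_{0,3}$ the product $\tfrac12\Delta\omega_{0,2}\cdot\Delta\omega_{0,2}$ of \eqref{Rec0,3} is manifestly $\sigma$-even, so the odd piece is absent, Lemma~\ref{lem:contour} applies directly, and only the final ramification-point residue comparison remains.
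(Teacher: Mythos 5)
Your first paragraph (vanishing of all half-integer $\omega_{g,n}$ at $\mathscr{Q}=0$) is correct and matches the paper. The gap is in the integer-$g$ step: the inductive input you carry along --- TR\ref{TR1} and TR\ref{TR2} --- is not enough, and the property you actually need is never invoked or proved. The paper's proof rests on the \emph{anti-invariance} of the unrefined multidifferentials, $\omega_{g,n+1}(p_0,J)+\omega_{g,n+1}(\sigma(p_0),J)=0$ (\cite[Theorem 4.4]{EO2}), which holds for everything already identified with its unrefined counterpart and is re-acquired at each inductive step. With this fact, ${\rm Rec}^{\mathscr{Q}=0}_{g,n+1}(p,J)$ is \emph{entirely} $\sigma$-invariant --- your ``odd piece'' is identically zero --- so Lemma~\ref{lem:contour} collapses the full sum of residues to $\mathcal{R}$, and the remaining discrepancy between the $p$-only arguments of ${\rm Rec}^{\mathscr{Q}}$ and the $\sigma(p)$-arguments of ${\rm Rec}$ in Definition~\ref{def:CEO} is resolved by pure algebra: each lower factor evaluated at $\sigma(p)$ equals minus its value at $p$, giving ${\rm Rec}^{\mathscr{Q}=0}_{g,n+1}=-{\rm Rec}_{g,n+1}$ up to terms built from $\omega_{0,2}(p,q)+\omega_{0,2}(\sigma(p),q)$, which by \eqref{w02base} vanish at ramification points and hence contribute no residue against the simple pole of ${\rm K}$ (this last point is exactly how the paper handles $\omega_{0,3}$, where anti-invariance of the $\omega_{0,2}$ factors is unavailable).

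Without anti-invariance your plan breaks at precisely the place you flag as ``the genuine obstacle,'' and the mechanism you propose there is not correct: residues of the integrand at $\sigma(J_0)$ coming from the double pole of $\omega_{0,2}(\sigma(p),p_i)$ do not ``reinstate'' the $\sigma(p)$-arguments of ${\rm Rec}_{g,n+1}$ --- they produce total-derivative terms of the form $d_{p_i}\bigl({\rm K}(p_0,p_i)\cdot\omega_{g,n}(J)\bigr)$, as in \eqref{pole2}, whereas the unrefined formula takes residues \emph{only} at $\mathcal{R}$ and its $\sigma(p)$'s sit inside the integrand, not at extra residue locations. (A smaller slip: you list $\widetilde{\mathcal{P}}$ as possible poles of the integrand ``from the denominator of ${\rm K}$,'' but poles of $\Delta y\cdot dx$ are \emph{zeros} of ${\rm K}$; at $\mathscr{Q}=0$ the integrand is regular on all of $\widetilde{\mathcal{P}}$, which is why those residues simply drop out.) To repair the proof, establish at each inductive step that the newly computed $\omega_{g,n+1}$, being equal to the unrefined one, satisfies anti-invariance, then use it as above; this is the paper's route and there is no shortcut around it via even/odd decomposition alone.
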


\begin{proof}
It is straightforward from the recursion formula \eqref{eq:RTR} that when $\mathscr{Q}=0$, all $\omega_{g,n}$ vanish for $g\in\mathbb{Z}+\frac12$. Thus, we will only consider $\omega_{g,n+1}$ for all $g,n\in\mathbb{Z}_{\geq0}$ by induction on $2g+n\geq2$.

From \eqref{Rec0,3} together with the assumption, it is clear that ${\rm K}(p_0,p)\cdot{\rm Rec}_{0,3}^\mathscr{Q}(p,p_1,p_2)$ has poles only at $\mathcal{R}$, $J_0$, and $\sigma(J_0)$, so the recursion for $\omega_{0,3}$ becomes
\begin{equation}
    \omega_{0,3}(p_0,p_1,p_2)=-2\left(\sum_{r\in\mathcal{R}}\underset{p=r}{\text{Res}}+\sum_{r\in\sigma(\!J_0\!)}\underset{p=r}{\text{Res}}\right) {\rm K}(p_0,p)\!\cdot\! \text{Rec}^\mathscr{Q}_{0,3}(p,p_1,p_2).\label{w0,31}
\end{equation}
Since the integrand of \eqref{w0,31} is invariant under the involution $\sigma$, Lemma~\ref{lem:contour} implies that
\begin{align}
    \omega_{0,3}(p_0,p_1,p_2)&=-\sum_{r\in\mathcal{R}}\underset{p=r}{\text{Res}}\,{\rm K}(p_0,p)\cdot \text{Rec}^\mathscr{Q}_{0,3}(p,p_1,p_2) \notag \\
    &=-\sum_{r\in\mathcal{R}}\underset{p=r}{\text{Res}}\,{
    \rm K}(p_0,p)\cdot \frac12\Delta\omega_{0,2}(p,p_1)\cdot \Delta\omega_{0,2}(p,p_2).\label{Q=01}
\end{align}

Recall that $\omega_{0,2}(p,q)$ obeys
\begin{equation}
    \omega_{0,2}(p,q)+\omega_{0,2}(\sigma(p),q)=\frac{dx(p)dx(q)}{(x(p)-x(q))^2}.\label{w02base}
\end{equation}
The right hand side has a simple zero at $p\in\mathcal{R}$ whereas the recursion kernel ${\rm K}(p_0,p)$ has a simple pole there. Using this, one can rewrite \eqref{Q=01} in the following form:

\begin{equation}
    \omega_{0,3}(p_0,p_1,p_2)=\sum_{r\in\mathcal{R}}\underset{p=r}{\text{Res}}\,{\rm K}(p_0,p)\biggl({\omega}_{0,2}(p,p_1)\cdot{\omega}_{0,2}(\sigma(p),p_2)+{\omega}_{0,2}(\sigma(p),p_1)\cdot{\omega}_{0,2}(p,p_2)\biggr),
\end{equation}
which coincides with the unrefined topological recursion \eqref{TR}.

Likewise, $\text{Rec}^{\mathscr{Q}=0}_{1,1}(p)=-\omega_{0,2}(p,\sigma(p))$ is invariant under $\sigma$, so by a similar argument the recursion for $\omega_{1,1}$ becomes
\begin{align}
 \omega_{1,1}(p_0)=\sum_{r\in\mathcal{R}}\,\underset{p=r}{\text{Res}}\,{\rm K}(p_0,p) \cdot \omega_{0,2}(\sigma(p),p),
\end{align}
which again agrees with the unrefined recursion formula \eqref{TR} for $\omega_{1,1}$. 

As a consequence, we have the following property of the unrefined topological recursion obtained in \cite[Theorem 4.4]{EO2}:
\begin{equation}
    \omega_{0,3}(p_0,p_1,p_2)+\omega_{0,3}(\sigma(p_0),p_1,p_2)=0,\;\;\;\;\omega_{1,1}(p_0)+\omega_{1,1}(\sigma(p_0))=0.
\end{equation}

Now, we assume that up to $m=2g+n\geq2$, all $\omega_{g,n+1}(p_0,J)$ agree with those from the unrefined topological recursion. As above this implies that  up to $2g+n=m\geq2$ we have
\begin{equation}
    \omega_{g,n+1}(p_0,J)+\omega_{g,n+1}(\sigma(p_0),J)=0,\label{antiinvolution}
\end{equation}
due to \cite[Theorem 4.4]{EO2}. One can then check that for all $g,n$ with $2g+n=m+1$,  $\text{Rec}^{\mathscr{Q}=0}_{g,n+1}(p,J)$ is invariant under the involution $\sigma$. Therefore, by applying Lemma~\ref{lem:contour}, one finds that the recursive formula for $\omega_{g,n+1}$ with $2g+n=m+1$ coincides with that of the unrefined topological recursion, similar to the case of $\omega_{0,3}$ and $\omega_{1,1}$.
\end{proof}

\begin{remark}\label{rem:0n}
It is immediately clear from the recursion formula \eqref{eq:RTR} that the $\omega_{g,n+1}$ depend polynomially on $\mathscr{Q}$. In particular, the $\mathscr{Q}$-independent terms $\omega_{g,n+1}\big|_{\mathscr{Q}=0}$ are recursively computed only by $\mathscr{Q}$-independent terms in $\text{Rec}^\mathscr{Q}_{g,n+1}$. Proposition~\ref{prop:Q=0RTR} implies that $\omega_{g,n+1}\big|_{\mathscr{Q}=0}$ from the refined topological recursion matches with $\omega_{g,n+1}$ from the unrefined topological recursion for each $g$ and $n$. In other words, the recursion and the limit $\mathscr{Q}=0$ commute.
\end{remark}



\subsection{Properties of refined topological recursion}\label{sec:properties}
In this section, we will prove key properties of the refined topological recursion analogous to Theorem~\ref{thm:TR}. Since the pole structure of $\omega_{g,n+1}(p_0,J)$ is quite different in the refined setting, it requires a more delicate treatment than one may naively imagine.

We have:
\begin{theorem}\label{thm:RTR}
Let $\mathcal{S}^{\bm \mu }$ be a genus zero degree two refined spectral curve satisfying {\rm Assumption \ref{ass:tr}}. For any $(g,n)$ with $2g,n\in\mathbb{Z}_{\geq0}$ and $2g+n\geq2$, the multidifferentials $\omega_{g,n+1}$ constructed from the refined topological recursion satisfy the following properties:
\begin{enumerate}[label={\bf RTR\arabic*:}, ref=\arabic*,leftmargin=1.7cm]
\itemsep2pt 
\item\label{RTR1} The multidifferential $\omega_{g,n+1}$ is symmetric;

\item\label{RTR2} All poles of $\omega_{g,n+1}$ in the first variable lie in $\mathcal{R}^*\cup\sigma(J)$;

\item\label{RTR3} At any $ o\in \Sigma$, $\omega_{g,{n+1}}$ is residue-free in the first (thus, any) variable:
\begin{equation}
    \underset{p=o}{{\rm Res}}\,\omega_{g,n+1}(p,J)=0;
\end{equation}

\item \label{RTR4}  For $n>0$ we have
\begin{equation}
(2g-2+n)\,\omega_{g,n}(J)=-\left(\sum_{r\in\mathcal{R}^*}\underset{p=r}{{\rm Res}}+\sum_{r\in\sigma(J)}\underset{p=r}{{\rm Res}}\right)\Phi(p)\cdot\omega_{g,n+1}(p,J).\label{Rinverse}
\end{equation}

\end{enumerate}
where $J=(p_1,\ldots,p_n)\in \Sigma^n$, and we assume $p_j\neq \sigma(p_k)$ for all $i,j=1,\ldots,n$.
\end{theorem}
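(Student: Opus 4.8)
The plan is to prove all four properties simultaneously by a single induction on $m=2g+n\geq 2$, since they are logically intertwined: establishing $\omega_{g,n+1}$ requires that the lower multidifferentials fed into $\text{Rec}^{\mathscr{Q}}_{g,n+1}$ already be symmetric and residue-free, so RTR\ref{RTR1}--RTR\ref{RTR3} must be carried together through the induction. The base cases $m=2$, namely $\omega_{0,3}$, $\omega_{\frac12,2}$, and $\omega_{1,1}$, I would verify by hand from the explicit integrands \eqref{Rec0,3}--\eqref{Rec1,1}, where all residue computations are finite and can be done directly.

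For the inductive step I would first establish RTR\ref{RTR2} together with the first-variable case of RTR\ref{RTR3} (the ``thus any'' clause then following from RTR\ref{RTR1}). The poles of $\omega_{g,n+1}(p_0,J)$ in $p_0$ enter only through the kernel ${\rm K}(p_0,p)$, whose $p_0$-dependence sits in $\int_{\sigma(p)}^{p}B(p_0,\cdot)$, a third-kind differential with simple poles at $p_0\in\{p,\sigma(p)\}$. Propagating these through each residue, the contributions over $\mathcal{R}$ produce poles only at effective ramification points $\mathcal{R}^*$, while those over $\sigma(J_0)$ produce poles precisely on the allowed locus $\sigma(J_0)$. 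The genuinely new feature relative to the unrefined case is that the recursion \emph{also} takes residues at $\widetilde{\mathcal{P}}_+$; I would show these do not survive as poles of the final object, combining the induction hypothesis with Lemma~\ref{lem:contour} and the explicit pole orders of $\Delta\omega_{0,2}$ and of the refined term $\mathscr{Q}\,dx\,d_p(\cdots)$ at $\widetilde{\mathcal{P}}$. Residue-freeness in $p_0$ at ramification points then follows from the local structure of ${\rm K}$ as in the unrefined argument, while at the new poles $\sigma(J_j)$ it requires a separate direct local check.

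The main obstacle is RTR\ref{RTR1}. Symmetry within $J=(p_1,\dots,p_n)$ is essentially automatic, since $\text{Rec}^{\mathscr{Q}}_{g,n+1}(p,J)$ and the residue locus depend symmetrically on the $J$-variables once the lower $\omega$'s are symmetric by induction. The hard case is the exchange $p_0\leftrightarrow p_1$, where the two points enter on completely different footings and abstract loop equations are unavailable. I would argue directly: using the first-variable RTR\ref{RTR3} together with the vanishing of the total sum of residues in $p$, I would rewrite the residues over $\mathcal{R}\cup\sigma(J_0)\cup\widetilde{\mathcal{P}}_+$ as minus the residues at the complementary poles $p=p_0,\,\sigma(p_0)$ coming from the kernel, yielding an alternative representation of $\omega_{g,n+1}(p_0,J)$. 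Both $\omega_{g,n+1}(p_0,p_1,J')$ and $\omega_{g,n+1}(p_1,p_0,J')$ can then be cast as double-residue integrals in two auxiliary variables, and I would match them term by term after substituting the recursion once more and invoking the inductive symmetry. The delicate points are the genuinely refined pieces: the total-derivative term $\mathscr{Q}\,dx\,d_p\!\left(\omega_{g-\frac12,n+1}/dx\right)$, which raises pole orders and must be integrated by parts against the kernel, and the extra residues at $\widetilde{\mathcal{P}}_+$, which spoil the clean involution symmetry exploited in the unrefined proof. I expect the bookkeeping that shows all these asymmetric contributions cancel in pairs to be the technical heart of the whole theorem.

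Finally, RTR\ref{RTR4} I would obtain once RTR\ref{RTR1}--RTR\ref{RTR3} hold at the given level, adapting the unrefined dilaton-equation argument \eqref{inverse}: multiply $\omega_{g,n+1}(p,J)$ by a primitive $\Phi$ of $ydx$, sum the residues over $\mathcal{R}^*\cup\sigma(J)$, and collapse the sum using the recursion. RTR\ref{RTR3} guarantees independence of the choice of $\Phi$, RTR\ref{RTR2} ensures no poles outside $\mathcal{R}^*\cup\sigma(J)$ contribute, and the new $\sigma(J)$ residues absent in the unrefined formula are exactly accounted for by the new poles identified in RTR\ref{RTR2}; the prefactor $2g+n-2$ emerges as usual from counting the multiplicity with which each $\omega_{g,n}$ appears.
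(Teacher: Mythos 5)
Your overall skeleton (induction on $2g+n$, re-applying the recursion in the second variable, exchanging the order of the two residues, and isolating the extra $\underset{p=q}{\mathrm{Res}}$ term) matches the paper's strategy for RTR\ref{RTR1}, and your treatment of RTR\ref{RTR2} and RTR\ref{RTR4} is broadly in line with Corollary~\ref{coro:poles} and Proposition~\ref{thm:Rinverse}. However, there is a genuine gap at the technical heart of the symmetry proof. You assert that ``abstract loop equations are unavailable'' and propose instead that the leftover asymmetric contributions ``cancel in pairs'' after term-by-term matching. They do not cancel in pairs. Both in Eynard--Orantin's unrefined proof and in the paper's refined one, the residual terms after the residue exchange assemble into the single expression
\begin{equation}
\sum_{s\in\mathcal{R}^*}\underset{q=s}{\mathrm{Res}}\;{\rm K}(p_0,q)\cdot\Delta\omega_{0,2}(q,q_0)\cdot\frac{P_{g,n+1}(q,J)}{2\,\omega_{0,1}(q)},
\qquad
P_{g,n+1}:=2\,\omega_{0,1}\,\omega_{g,n+1}+{\rm Rec}^{\mathscr{Q}}_{g,n+1},
\end{equation}
and this vanishes only because of a \emph{concrete} loop-equation statement (Lemma~\ref{lem:GLP} in the paper): $P_{g,n+1}$ is invariant under the involution and holomorphic at every effective ramification point, so the residue sum at $\mathcal{R}^*$ gives zero. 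While it is true that the general abstract loop equation formalism has not been extended to the refined setting, this local lemma is exactly what must be formulated and proved (it follows from the alternative representation $\omega_{g,n+1}(p_0,J)=-{\rm Rec}^{\mathscr{Q}}_{g,n+1}(p_0,J)/2\omega_{0,1}(p_0)+2\sum_i d_{p_i}({\rm K}(p_0,p_i)\,\omega_{g,n}(J))$, which you yourself derive from the vanishing of total residues). Without it, your plan has no mechanism to kill the residual terms, and the induction does not close.

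Two secondary points. First, your logical ordering proves all of RTR\ref{RTR3} before RTR\ref{RTR4}, claiming residue-freeness of $\omega_{g,1}$ at ramification points ``follows from the local structure of ${\rm K}$ as in the unrefined argument.'' In the refined case the integrand ${\rm Rec}^{\mathscr{Q}}_{g,1}$ contains the derivative term $\mathscr{Q}\,dx\cdot d_p(\omega_{g-\frac12,1}/dx)$ and products of $\omega_{\frac12,1}$, so this local analysis is not a routine transcription; the paper instead proves RTR\ref{RTR3} only for $n\geq 1$ first (Proposition~\ref{lem:residue1}), then RTR\ref{RTR4}, and finally deduces the $\omega_{g,1}$ case \emph{from} RTR\ref{RTR4} (Proposition~\ref{prop:residue2}). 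You would need either to adopt this interleaved structure or to actually carry out the refined local computation you are implicitly assuming. Second, the base case $\omega_{\frac12,2}$ is not a finite hand-check on the same footing as $\omega_{0,3}$ and $\omega_{1,1}$: its symmetry (Lemma~\ref{lem:w12,2}) requires rewriting the recursion via integration by parts and a local expansion of $\Delta\omega_{0,2}$ near ramification points, and it is also an input to RTR\ref{RTR4} at level $2g+n=2$; flagging it as ``direct'' underestimates where real work lies.
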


\subsubsection{Proof of RTR\ref{RTR1}}

We will use several lemmas in the proof of RTR\ref{RTR1}, whose proofs we defer to Appendix~\ref{sec:proofs}:
\begin{lemma}[Structure of $\omega_{\frac{1}{2},1}$]\label{lem:pole1/2,1} All poles of
$\omega_{\frac12,1}$ are simple and lie in $\mathcal{R}\cup\widetilde{\mathcal{P}}$. Explicitly,
let ${\rm ord}_p(y)$ be the order of the zero of $y$ at $p\in\widetilde{\mathcal{P}}$ and $-{\rm ord}_p(y)$ be the order of the pole, then $\omega_{\frac12,1}$ is given by
\begin{equation}
    \omega_{\frac12,1}(p_0)=\frac{\mathscr{Q}}{2}\left(-\frac{dy(p_0)}{y(p_0)}+\sum_{p\in \widetilde{\mathcal{P}}_+}\,\mu_p\,{\rm ord}_p(y)\,\eta_p(p_0)\right),\label{w1/2,1-2}
\end{equation}
where $\eta_p$ is the unique meromorphic differential on $\mathbb{P}^1$ with residue $+1$ at $p$ and $-1$ at $\sigma(p)$.
\end{lemma}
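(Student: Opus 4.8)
The plan is to compute the sum of residues in \eqref{eq:half1def} directly, exploiting that in genus zero everything is expressible in one global coordinate. Fix a coordinate $z$ on $\Sigma=\mathbb{P}^1$, so that $B(p_0,p)=\frac{dz_0\,dz}{(z_0-z)^2}$ with $z_0=z(p_0)$, $z=z(p)$, and write the involution as the M\"obius map $z\mapsto s(z)$ representing $\sigma$. Since $y^2=Q(x)$, the involution exchanges sheets, so $y\circ\sigma=-y$ and hence $\Delta y=2y$. First I would integrate $B$ explicitly: a primitive of $B(p_0,\cdot)$ in the second variable is $\tfrac{dz_0}{z_0-z(\cdot)}$, whence
\begin{equation}
\int_{\sigma(p)}^{p}B(p_0,\cdot)=\frac{dz_0}{z_0-z(p)}-\frac{dz_0}{z_0-z(\sigma(p))}.
\end{equation}
Substituting this together with $\Delta y=2y$ into the kernel \eqref{kernel}, the two factors of $dx(p)$ cancel and the integrand of \eqref{eq:half1def} collapses to
\begin{equation}
{\rm K}(p_0,p)\cdot dx(p)\cdot dy(p)=\tfrac14\left(\int_{\sigma(p)}^{p}B(p_0,\cdot)\right)\cdot\frac{dy(p)}{y(p)}=:\Omega(p_0,p),
\end{equation}
a meromorphic $1$-form in $p$ whose only singularities in $p$ lie at $p_0$, $\sigma(p_0)$ (from the integrated $B$) and at the zeroes and poles of $y$, i.e.\ at $\mathcal{R}\cup\widetilde{\mathcal{P}}$ (from $d\log y$).

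The key observation is that $\Omega$ is \emph{anti}-invariant under $\sigma$ in the variable $p$: the integrated bidifferential is odd under $p\mapsto\sigma(p)$, while $\tfrac{dy}{y}=d\log y$ is even because $y\circ\sigma=-y$. I would use this twice. First, at any ramification point $r\in\mathcal{R}$ (a fixed point of $\sigma$) anti-invariance forces $\operatorname{Res}_{p=r}\Omega=0$, since pulling back by $\sigma$ preserves the residue at a fixed point while flipping the sign of $\Omega$; thus the whole sum over $\mathcal{R}$ in \eqref{eq:half1def} drops out. Second, for $p\in\widetilde{\mathcal{P}}_+$ the integrated-$B$ factor is regular, so the residue comes entirely from the simple pole of $d\log y$ with residue $\operatorname{ord}_p(y)$; freezing $z(p)$ and viewing the prefactor as a $1$-form in $p_0$, it has residue $+1$ at $p_0=p$ and $-1$ at $p_0=\sigma(p)$, so it is identified with $\eta_p(p_0)$, and this residue becomes proportional to $\operatorname{ord}_p(y)\,\eta_p(p_0)$.

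It remains to treat the residue at $p=\sigma(p_0)$, which I expect to be the main obstacle, since it mixes the two variables and requires care with the involution in coordinates. The pole arises from $-\tfrac{dz_0}{z_0-z(\sigma(p))}$ as $z(\sigma(p))\to z_0$; writing $u=z(p)$, $u_0=z(\sigma(p_0))$ with $s(u_0)=z_0$, a local expansion together with the chain-rule consequence of $y\circ\sigma=-y$ (differentiating $y(s(u))=-y(u)$ to relate $y'(u_0)/s'(u_0)$ to $-y'(z_0)$, and using $y(u_0)=-y(p_0)$) collapses this residue to a multiple of $\tfrac{dy(p_0)}{y(p_0)}$. Assembling the three contributions with their weights $\mu_r$ and the overall factor $-2\mathscr{Q}$ in \eqref{eq:half1def} then yields the closed form \eqref{w1/2,1-2}, and the stated pole structure is read off directly: both $d\log y$ and each $\eta_p$ have only simple poles, supported on $\mathcal{R}\cup\widetilde{\mathcal{P}}$. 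As a consistency check one verifies independence of the chosen decomposition $\widetilde{\mathcal{P}}_+$, since $\eta_{\sigma(p)}=-\eta_p$ and $\mu_{\sigma(p)}=-\mu_p$ make each summand $\mu_p\operatorname{ord}_p(y)\,\eta_p$ well defined on $\sigma$-orbits; the same antisymmetry, combined with the residue theorem for $\Omega$, confirms that the discarded residues at $\mathcal{R}$ and at the antisymmetric pair $\{p_0,\sigma(p_0)\}$ are mutually consistent.
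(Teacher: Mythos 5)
Your proposal is correct and is essentially the paper's own argument: both proofs simplify the integrand of \eqref{eq:half1def} to $\tfrac14\,\tfrac{dy(p)}{y(p)}\int_{\sigma(p)}^{p}B(p_0,\cdot)$ using $\Delta y=2y$, observe that this $1$-form in $p$ is anti-invariant under $\sigma$, and then do residue bookkeeping, the only difference being that you evaluate the residues at $\mathcal{R}$ (vanishing by anti-invariance at fixed points) and at $\sigma(p_0)$ (chain rule through $y\circ\sigma=-y$) head-on, whereas the paper invokes the residue theorem to trade them for the residue at $p_0$ and a weighted sum over $\widetilde{\mathcal{P}}_-$, arriving at its intermediate formula \eqref{w1/2,1-1} --- an immaterial difference in execution. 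One caveat, which applies equally to your final assembly and to the paper's last step: carried out literally from \eqref{eq:half1def}, the computation yields $\frac{\mathscr{Q}}{2}\bigl(-\tfrac{dy(p_0)}{y(p_0)}-\sum_{p\in\widetilde{\mathcal{P}}_+}\mu_p\,{\rm ord}_p(y)\,\eta_p(p_0)\bigr)$, i.e.\ \eqref{w1/2,1-2} with $\mu_p\mapsto-\mu_p$, since the summand $\mu_r\,{\rm ord}_r(y)\,\eta_r$ is invariant under $r\mapsto\sigma(r)$ (both $\mu$ and $\eta$ flip sign while ${\rm ord}$ does not), so converting the $\widetilde{\mathcal{P}}_-$-sum in \eqref{w1/2,1-1} into a $\widetilde{\mathcal{P}}_+$-sum cannot produce the $+$ sign printed in the lemma; as the $\mu_p$ are free parameters this is a harmless sign-convention slip internal to the paper, not a gap in your argument.
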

\begin{lemma}[Pole structure in the first variable]\label{lem:poles}
For $2g+n\geq2$, all poles of $\omega_{g,n+1}(p_0,J)$, with respect to $p_0$, lie in $\mathcal{R}^*\cup \sigma(J)$.
\end{lemma}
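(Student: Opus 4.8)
The plan is to prove Lemma~\ref{lem:poles} by strong induction on $2g+n\ge 2$, carried out simultaneously with the symmetry statement RTR\ref{RTR1}, so that for every $(g',n')$ with $2g'+n'<2g+n$ we may assume both symmetry and the asserted pole structure of $\omega_{g',n'+1}$. The three base cases $\omega_{0,3}$, $\omega_{\frac12,2}$, $\omega_{1,1}$ are checked directly from \eqref{Rec0,3}--\eqref{Rec1,1}, using the explicit genus-zero Bergman kernel $B(p_0,p)=\frac{dz_0\,dz}{(z_0-z)^2}$ together with the explicit shape \eqref{w1/2,1-2} of $\omega_{\frac12,1}$ provided by Lemma~\ref{lem:pole1/2,1}.

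For the inductive step, the key structural remark is that in the recursion \eqref{eq:RTR} the \emph{entire} dependence on $p_0$ is carried by the kernel ${\rm K}(p_0,p)$, whose numerator $\int_{\sigma(p)}^{p}B(p_0,\cdot)$ equals $\bigl(\tfrac{1}{z_0-z(p)}-\tfrac{1}{z_0-z(\sigma(p))}\bigr)\,dz_0$ in the global coordinate $z$. Hence, as a $1$-form in $p_0$, ${\rm K}(p_0,p)$ has only \emph{simple} poles, located at $p_0=p$ and $p_0=\sigma(p)$ with opposite residues. Taking the prescribed residues in $p$ over $\mathcal{R}\cup\sigma(J_0)\cup\widetilde{\mathcal{P}}_+$ therefore forces every pole of $\omega_{g,n+1}(\cdot,J)$ to lie at a residue point or at its $\sigma$-image: the residues over $\mathcal{R}$ give poles in $\mathcal{R}$; those over $\sigma(J)$ give poles in $\sigma(J)\cup J$; those over $\widetilde{\mathcal{P}}_+$ give poles in $\widetilde{\mathcal{P}}$; and the residue at $p=\sigma(p_0)$, forced purely by the kernel's own pole there (as $\mathrm{Rec}^{\mathscr{Q}}_{g,n+1}$ does not depend on $p_0$), evaluates to a multiple of $\mathrm{Rec}^{\mathscr{Q}}_{g,n+1}(\sigma(p_0),J)$, whose $p_0$-poles again lie in $\mathcal{R}\cup J\cup\sigma(J)\cup\widetilde{\mathcal{P}}$ by the inductive pole structure of its constituents. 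It thus remains to sharpen $\mathcal{R}$ to $\mathcal{R}^*$ and to eliminate the spurious candidates $J$ and $\widetilde{\mathcal{P}}$.

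The reduction from $\mathcal{R}$ to $\mathcal{R}^*$ is routine: at an ineffective ramification point $\Delta y\cdot dx$ is singular, so ${\rm K}(p_0,p)$ acquires no pole there and contributes no residue, using the ``at most double zero'' clause of Assumption~\ref{ass:tr}. The genuine difficulty --- and the step I expect to be the main obstacle --- is the cancellation of the candidate poles at $\widetilde{\mathcal{P}}$ and at $J$. Each such pole arises from \emph{two} competing sources: from the $p=\sigma(p_0)$ residue (through the poles of $\mathrm{Rec}^{\mathscr{Q}}_{g,n+1}(\sigma(p_0),J)$), and from the $\widetilde{\mathcal{P}}_+$, respectively $\sigma(J)$, residues through the second summand $-\tfrac{1}{z_0-z(\sigma(p))}$ of the kernel. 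I would show these cancel via the global residue theorem: since $\alpha(p):={\rm K}(p_0,p)\,\mathrm{Rec}^{\mathscr{Q}}_{g,n+1}(p,J)$ is a meromorphic $1$-form on $\mathbb{P}^1$ with vanishing total residue, the defining residue sum equals minus the sum of residues over the complementary poles $\{p_0\}\cup J\cup\widetilde{\mathcal{P}}_-$, producing a second representation of $\omega_{g,n+1}$; comparing the two representations in a neighbourhood of each point of $\widetilde{\mathcal{P}}$ and each $p_j$ isolates the local principal part and forces it to vanish. This computation feeds on the precise residues of $\omega_{\frac12,1}$ at $\widetilde{\mathcal{P}}$ recorded in \eqref{w1/2,1-2}, with their weights $\mu_p\,\mathrm{ord}_p(y)$, transported through $\mathrm{Rec}^{\mathscr{Q}}_{g,n+1}$.

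The reason this last step must be done by explicit local residue bookkeeping, rather than by a one-line contour deformation, is that --- in contrast with the unrefined case --- $\omega_{g,n+1}(p_0,J)$ is \emph{not} anti-invariant under $p_0\mapsto\sigma(p_0)$; it is exactly the new $\widetilde{\mathcal{P}}$-contributions that break this symmetry (note that in the proof of Proposition~\ref{prop:Q=0RTR} anti-invariance was available only at $\mathscr{Q}=0$). Consequently Lemma~\ref{lem:contour} cannot be used to fold the contour onto $\mathcal{R}$ as in the unrefined argument, and the cancellations at $\widetilde{\mathcal{P}}$ and $J$ have to be verified point-by-point. Once Lemma~\ref{lem:poles} is in hand, the symmetry RTR\ref{RTR1} at the same level, and then the full pole statement RTR\ref{RTR2}, follow by the usual symmetrization argument, closing the simultaneous induction.
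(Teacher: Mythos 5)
Your proposal is correct in substance and follows essentially the same route as the paper's proof in Appendix~\ref{sec:proofs}: induction on $2g+n$ (with symmetry interleaved, which the paper does implicitly and you make explicit), direct computation for the base cases, and, in the inductive step, the residue-theorem ``flip'' of the defining residue sum onto the complementary poles $\{p_0\}\cup J\cup\widetilde{\mathcal{P}}_-$ --- precisely the paper's representation \eqref{pole2} --- followed by local cancellation checks. Two small caveats: in the inductive step the integrand is already regular at $\widetilde{\mathcal{P}}$ (the kernel's zero at poles of $ydx$ kills the simple pole of $\omega_{\frac12,1}$, which only appears linearly), so a genuine cancellation at $\widetilde{\mathcal{P}}$ is needed only in the $\omega_{1,1}$ base case and the real point-by-point work in the inductive step is at $J$; and merely ``comparing'' the two representations cannot by itself force a principal part to vanish --- one must compute it in the flipped representation, which is what you in fact commit to doing.
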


\begin{remark}
At an ineffective ramification point, the kernel ${\rm K}(p_0,p)$ has at least a triple zero in $p$, $\omega_{\frac12,1}$ has at most simple pole, and thanks to Lemma \ref{lem:poles} all other $\omega_{g,n+1}$ are regular in the first variable. Thus, it is clear that in the recursion formula there will be no contributions from the residues at these points, and henceforth we freely remove them when convenient.
\end{remark}

\begin{lemma}[Loop equations]\label{lem:GLP}
Define a quadratic (in $p$) differential $P_{g,n+1}(p,J)$ for $2g+n\geq2$ by
\begin{equation}
    P_{g,n+1}(p,J):=2\omega_{0,1}(p)\omega_{g,n+1}(p,J)+{\rm Rec}_{g,n+1}^{\mathscr{Q}}(p,J).\label{GLP}
\end{equation}
Then:
\begin{enumerate}[label=\roman*)]
    \item $P_{g,n+1}$ is invariant under the involution $\sigma$,
    \item $P_{g,n+1}$ is holomorphic at every effective ramification point $r\in\mathcal{R}^*$.
\end{enumerate}
\end{lemma}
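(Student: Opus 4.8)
The plan is to treat the two assertions by rather different means: part (i) will fall out of a direct manipulation of the recursion formula \eqref{eq:RTR}, while part (ii) is a genuine local statement at the ramification points and will be set up as an induction on $2g+n$.

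First I would record two symmetries of the recursion kernel \eqref{kernel} that drive everything. From the base relation \eqref{w02base}, the $\sigma$-symmetric combination $\omega_{0,2}(p_0,\cdot)+\omega_{0,2}(\sigma(p_0),\cdot)$ is the $\zeta$-differential of $dx(p_0)/(x(p_0)-x(\zeta))$, whose value at $p$ and at $\sigma(p)$ agree since $x(p)=x(\sigma(p))$; hence $\int_{\sigma(p)}^{p}$ of it vanishes, which gives that the kernel is \emph{odd} in its first argument, $\mathrm{K}(\sigma(p_0),p)=-\mathrm{K}(p_0,p)$. Since numerator and denominator both change sign under $p\mapsto\sigma(p)$, it is \emph{even} in its second, $\mathrm{K}(p_0,\sigma(p))=\mathrm{K}(p_0,p)$. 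I would also note that, away from $\mathcal{R}$, $\mathrm{K}(p_0,p)$ has only simple poles in $p$, located at $p=p_0$ and $p=\sigma(p_0)$.

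For part (i), apply \eqref{eq:RTR} to both $\omega_{g,n+1}(p,J)$ and $\omega_{g,n+1}(\sigma(p),J)$. The only difference between the two residue prescriptions is that $\sigma(J_0)$ carries the reflected first variable, so the residue at $\sigma(p)$ is replaced by one at $p$; combined with the oddness $\mathrm{K}(\sigma(p),\cdot)=-\mathrm{K}(p,\cdot)$, all residues over $\mathcal{R}$, over $\widetilde{\mathcal{P}}_+$, and at $\sigma(p_i)$ for $i\ge 1$ cancel between the two, leaving $L_{g,n+1}(p,J):=\omega_{g,n+1}(p,J)+\omega_{g,n+1}(\sigma(p),J)=2\big(\mathrm{Res}_{p'=p}-\mathrm{Res}_{p'=\sigma(p)}\big)\mathrm{K}(p,p')\cdot\mathrm{Rec}^{\mathscr{Q}}_{g,n+1}(p',J)$. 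For generic $p$ the integrand $\mathrm{Rec}^{\mathscr{Q}}_{g,n+1}(p',J)$ is holomorphic at $p'=p,\sigma(p)$ (its poles sit at $\mathcal{R}^*$, at $J\cup\sigma(J)$, and at $\widetilde{\mathcal{P}}$ by Lemma~\ref{lem:poles} and Lemma~\ref{lem:pole1/2,1}), so only the simple kernel poles contribute. Evaluating them, and using $\mathrm{K}(p,\sigma(p'))=\mathrm{K}(p,p')$ to handle the $\sigma(p)$ residue via the substitution $p'\mapsto\sigma(p')$, yields $2\,\omega_{0,1}(p)\,L_{g,n+1}(p,J)=\mathrm{Rec}^{\mathscr{Q}}_{g,n+1}(\sigma(p),J)-\mathrm{Rec}^{\mathscr{Q}}_{g,n+1}(p,J)$. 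Substituting into $P_{g,n+1}(p,J)-P_{g,n+1}(\sigma(p),J)=2\,\omega_{0,1}(p)\,L_{g,n+1}(p,J)+\big(\mathrm{Rec}^{\mathscr{Q}}_{g,n+1}(p,J)-\mathrm{Rec}^{\mathscr{Q}}_{g,n+1}(\sigma(p),J)\big)$, where I used $\omega_{0,1}\circ\sigma=-\omega_{0,1}$, gives $0$. This proves (i) with no induction.

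For part (ii), fix $r\in\mathcal{R}^*$ and a local coordinate $z$ with $\sigma_r:z\mapsto-z$, in which $\omega_{0,1}$ has a double zero; by (i) the quadratic differential $P_{g,n+1}$ is even in $z$, so only even negative powers can appear and it suffices to kill those. I would induct on $2g+n$, checking the base cases $(0,3)$, $(\tfrac12,2)$, $(1,1)$ directly from \eqref{Rec0,3}, \eqref{Rec1/2,2}, \eqref{Rec1,1} and Lemma~\ref{lem:pole1/2,1}, and in the inductive step expanding $2\,\omega_{0,1}\omega_{g,n+1}$ together with each term of $\mathrm{Rec}^{\mathscr{Q}}_{g,n+1}$ in $z$, using Lemma~\ref{lem:poles} and the inductive loop equations to control the principal parts of the constituent correlators at $r$ and showing the even polar terms cancel. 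The crux, and the genuinely new difficulty relative to the unrefined quadratic loop equation, is the refinement term $\mathscr{Q}\,dx(p)\,d_p\big(\omega_{g-\frac12,n+1}(p,J)/dx(p)\big)$: it has no unrefined analogue, the explicit derivative raises pole orders at $r$, and the division by $dx$—which vanishes there—must be reconciled with the double zero of $\omega_{0,1}$. Controlling its Laurent tail using the $\sigma_r$-parity of $\omega_{g-\frac12,n+1}$ (and, at the bottom of the induction, the explicit shape of $\omega_{\frac12,1}$ from Lemma~\ref{lem:pole1/2,1}) and showing it conspires with the remaining terms to leave $P_{g,n+1}$ regular is where the real work lies; this is also precisely where the hypothesis that $r$ be \emph{effective}, so that $\Delta y\, dx$ has a true zero, is essential.
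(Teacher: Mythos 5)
Your part (i) is essentially correct, and it is a genuinely different derivation from the paper's: you exploit the parity of the kernel (odd in its first argument, even in its second) to reduce $\omega_{g,n+1}(p,J)+\omega_{g,n+1}(\sigma(p),J)$ to the two kernel-pole residues, obtaining $2\omega_{0,1}(p)L_{g,n+1}(p,J)={\rm Rec}^{\mathscr{Q}}_{g,n+1}(\sigma(p),J)-{\rm Rec}^{\mathscr{Q}}_{g,n+1}(p,J)$, which is equivalent to the $\sigma$-invariance of $P_{g,n+1}$. The only caveat is that evaluating ${\rm Rec}^{\mathscr{Q}}_{g,n+1}$ at $p'=p,\sigma(p)$ tacitly uses regularity of the diagonal term $\omega_{g-1,n+2}(p',p',J)$ there, which needs a word beyond the literal first-variable statement of Lemma~\ref{lem:poles}.

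Part (ii), however, is a genuine gap: what you give is a plan, and the step you yourself flag as ``where the real work lies'' --- the cancellation of the Laurent tail of $\mathscr{Q}\,dx(p)\,d_p\bigl(\omega_{g-\frac12,n+1}(p,J)/dx(p)\bigr)$ against the remaining terms at $r\in\mathcal{R}^*$ --- is precisely the content of the lemma and is never carried out, so nothing is proved. It is also the wrong battle to fight. The identity \eqref{pole2}, established in the proof of Lemma~\ref{lem:poles} (which you already invoke), together with its base-case analogues \eqref{w1,1-1} and \eqref{P1/2,20}, gives $\omega_{g,n+1}(p_0,J)=-{\rm Rec}^{\mathscr{Q}}_{g,n+1}(p_0,J)/2\omega_{0,1}(p_0)+2\sum_i d_{p_i}\bigl({\rm K}(p_0,p_i)\cdot\omega_{g,n}(J)\bigr)$; multiplying by $2\omega_{0,1}(p_0)$ and adding ${\rm Rec}^{\mathscr{Q}}_{g,n+1}$ as in \eqref{GLP}, the \emph{entire} recursion integrand --- including the refinement term --- cancels identically, leaving the closed form
\begin{equation*}
P_{g,n+1}(p_0,J)=4\,\omega_{0,1}(p_0)\sum_{i=1}^{n}d_{p_i}\Bigl({\rm K}(p_0,p_i)\cdot\omega_{g,n}(J)\Bigr),
\end{equation*}
cf.\ \eqref{Pg,n+1-1}, with \eqref{P03} and \eqref{P1,1-1} for the base cases. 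This is the paper's proof, and both claims are then manifest: as a function of $p_0$ the kernel ${\rm K}(p_0,p_i)$ has poles only at $p_i,\sigma(p_i)$ while $\omega_{0,1}$ is regular at every effective ramification point (giving (ii)), and $\omega_{0,1}$ and ${\rm K}(\cdot,p_i)$ are each anti-invariant (giving (i)). No Laurent expansion at $r$, no induction, and no separate treatment of the $\mathscr{Q}$-term are needed. Finally, your local setup contains a factual slip: at $r\in\mathcal{R}^*$ the form $\omega_{0,1}$ has a zero of order $0$ or $2$, not always a double zero --- for the degenerate Bessel curve, $y\,dx=dz$ is nonvanishing at the effective ramification point $z=0$ --- so any cancellation argument leaning on the double zero would break down there.
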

It turns out that the proof of the symmetry for $\omega_{\frac12,2}$ is quite different from that for other $\omega_{g,n+1}$, hence we will also defer its proof to Appendix~\ref{sec:proofs}:
\begin{lemma}\label{lem:w12,2} The bidifferential $\omega_{\frac{1}{2},2}$ is symmetric:
\begin{equation}
    \omega_{\frac12,2}(p_0,p_1)=\omega_{\frac12,2}(p_1,p_0).
\end{equation}
\end{lemma}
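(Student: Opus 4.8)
The plan is to prove the symmetry by direct computation, exploiting that on $\Sigma=\mathbb{P}^1$ every ingredient entering $\omega_{\frac12,2}$ is completely explicit: the bidifferential $\omega_{0,2}=B$ is the standard second-kind kernel $B(p,q)=\frac{dz(p)\,dz(q)}{(z(p)-z(q))^2}$ in a global coordinate $z$; the recursion kernel ${\rm K}(p_0,p)=\tfrac12\frac{\int_{\sigma(p)}^pB(p_0,\cdot)}{\Delta y(p)\,dx(p)}$ has an explicit numerator with simple poles in $p$ at $p_0$ and $\sigma(p_0)$; and $\omega_{\frac12,1}$ is given in closed form by Lemma~\ref{lem:pole1/2,1}. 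Writing out \eqref{eq:RTR} for $(g,n+1)=(\tfrac12,2)$, the source of the difficulty is clear: $p_0$ enters only through the kernel while $p_1$ enters only through ${\rm Rec}^{\mathscr{Q}}_{\frac12,2}(p,p_1)$, so no symmetry is visible. First I would split ${\rm Rec}^{\mathscr{Q}}_{\frac12,2}$ into the two pieces of \eqref{Rec1/2,2} --- the piece $\Delta\omega_{0,2}(p,p_1)\,\omega_{\frac12,1}(p)$ and the $\mathscr{Q}$-anomaly piece $-\mathscr{Q}\,dx(p)\,d_p\!\big(\omega_{0,2}(\sigma(p),p_1)/dx(p)\big)$ --- and treat each separately. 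Since ${\rm Rec}^{\mathscr{Q}}_{\frac12,2}$ is of order $\mathscr{Q}^1$, so is $\omega_{\frac12,2}$, and it suffices to check symmetry of the coefficient of $\mathscr{Q}$.

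The main technical device is the residue theorem on $\mathbb{P}^1$. For fixed $p_0,p_1$ the integrand ${\rm K}(p_0,p)\cdot{\rm Rec}^{\mathscr{Q}}_{\frac12,2}(p,p_1)$ is a global meromorphic $1$-form in $p$, whose poles lie in $\mathcal{R}\cup\{p_0,\sigma(p_0),p_1,\sigma(p_1)\}\cup\widetilde{\mathcal{P}}$. The recursion instructs us to sum the residues over $\mathcal{R},\sigma(p_0),\sigma(p_1)$ and $\widetilde{\mathcal{P}}_+$ --- that is, over all poles except $p_0,p_1$ and $\widetilde{\mathcal{P}}_-$. Because the total residue vanishes, I would trade the prescribed residues for (minus) the residues at $p_0$, $p_1$ and $\widetilde{\mathcal{P}}_-$, where the local behaviour of the kernel is elementary. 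Each of these is then evaluated explicitly: the residues at $p_0,p_1$ come from the simple poles of $\int_{\sigma(p)}^pB(p_0,\cdot)$ and of $\Delta\omega_{0,2}(p,p_1)$, while those at $\widetilde{\mathcal{P}}_-$ use ${\rm Res}_{p=q}\,\omega_{\frac12,1}=\tfrac{\mathscr{Q}}{2}\,{\rm ord}_q(y)\,(\mu_q-1)$ obtained from Lemma~\ref{lem:pole1/2,1}. The outcome is a rational expression in $z(p_0),z(p_1)$ whose symmetry can be read off.

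The crux is the $\mathscr{Q}$-anomaly piece, which carries a $p$-derivative and no manifest $p_0\leftrightarrow p_1$ symmetry. Here I would integrate by parts in $p$: writing $g(p)={\rm K}(p_0,p)\,dx(p)$ (a function of $p$) and $f(p)=\omega_{0,2}(\sigma(p),p_1)/dx(p)$, the identity ${\rm Res}_{p=r}(g\,d_pf)=-{\rm Res}_{p=r}(f\,d_pg)$, valid at every point since the residue of an exact form vanishes, moves the derivative onto the kernel. After this step $\omega_{0,2}(\sigma(p),p_1)$ appears undifferentiated and on the same footing as the factor $\int_{\sigma(p)}^pB(p_0,\cdot)$ coming from $g$, which is what ultimately restores the balance between the two variables. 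Combining this with the explicit contribution of the first piece and simplifying, the coefficient of $\mathscr{Q}$ becomes symmetric.

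I expect the main obstacle to be precisely the bookkeeping around the refined-specific poles at $\widetilde{\mathcal{P}}$, which have no counterpart in the unrefined recursion, together with the anomaly term: neither the $\omega_{\frac12,1}$-piece nor the anomaly piece is individually symmetric, and the cancellations only close up after the integration by parts and the explicit evaluation of the $\widetilde{\mathcal{P}}$-residues of $\omega_{\frac12,1}$ are combined. A useful consistency check along the way is that at $\mathscr{Q}=0$ the whole expression vanishes by Proposition~\ref{prop:Q=0RTR}, so that only the genuinely refined, order-$\mathscr{Q}$ data needs to be matched.
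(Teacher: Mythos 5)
Your outline tracks the paper's own proof for its first half, but it stops exactly where the real difficulty begins. Working from \eqref{eq:RTR}, splitting ${\rm Rec}^{\mathscr{Q}}_{\frac12,2}$ into the $\omega_{\frac12,1}$ piece and the anomaly piece, relocating residues via the residue theorem on $\mathbb{P}^1$, and integrating by parts so that the $p$-derivative lands on ${\rm K}(p_0,p)\,dx(p)$ is precisely what the paper does: differentiating the kernel numerator produces $\Delta\omega_{0,2}(p,p_0)$, and differentiating its denominator produces a $dy(p)/y(p)$ term which cancels against the $-dy(p)/y(p)$ part of $\omega_{\frac12,1}(p)$ from Lemma~\ref{lem:pole1/2,1}. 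After these cancellations one is left with the manifestly symmetric term $\frac{\Delta\omega_{0,2}(p,p_0)\cdot\Delta\omega_{0,2}(p,p_1)}{4\,y(p)dx(p)}$ \emph{plus} the remainder $-{\rm K}(p_0,p)\cdot\Delta\omega_{0,2}(p,p_1)\cdot\sum_{q\in\widetilde{\mathcal{P}}_+}\mu_q\,{\rm ord}_q(y)\,\eta_q(p)$, cf.~\eqref{w1/2,22}, in which $p_0$ and $p_1$ still enter on entirely different footings. The symmetry of this remainder is the crux of the lemma, and it cannot be ``read off'': for a general curve in the class it is an identity involving the arbitrary data $x$, $y$ and all the $\eta_q$. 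The paper proves it by using involution invariance of the integrand (Lemma~\ref{lem:contour}) to reduce to residues at $\mathcal{R}$ only, and then expanding locally at each ramification point in the differentials $d\xi_{-2k+1}$; Assumption~\ref{ass:tr} (that $\omega_{0,1}$ has at most a double zero at each ramification point) forces each such residue to collapse to $c_r\,d\xi_{-1}(p_0)\,d\xi_{-1}(p_1)$, which is symmetric. Your proposal supplies no substitute for this step, so as written it is incomplete at the decisive point.

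There is also a concrete misconception about where the ${\bm\mu}$-dependence enters. The integrand ${\rm K}(p_0,p)\,{\rm Rec}^{\mathscr{Q}}_{\frac12,2}(p,p_1)$ is \emph{regular} at all of $\widetilde{\mathcal{P}}$: the simple poles of $\omega_{\frac12,1}$ there are killed by the zeros of the kernel, whose denominator $\Delta y\,dx$ has poles on $\mathcal{P}\supset\widetilde{\mathcal{P}}$ --- this is exactly why the paper drops the $\widetilde{\mathcal{P}}_+$ residues in the proof of Lemma~\ref{lem:poles}. Consequently the residues at $\widetilde{\mathcal{P}}_-$ that you plan to evaluate using ${\rm Res}_{p=q}\,\omega_{\frac12,1}=\tfrac{\mathscr{Q}}{2}\,{\rm ord}_q(y)(\mu_q-1)$ (the formula itself is correct) are all identically zero and contribute nothing. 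The ${\bm\mu}$-dependence instead survives through $\omega_{\frac12,1}(p_0)$ in the residue at $p=p_0$, and through the derivative term $2\,d_{p_1}\bigl({\rm K}(p_0,p_1)\,\omega_{\frac12,1}(p_1)\bigr)$ produced by the residue at $p=p_1$, which comes from the \emph{double} pole (not a simple pole) of $\Delta\omega_{0,2}(p,p_1)$, cf.~\eqref{P1/2,20}. These two non-manifestly-symmetric $\mu$-terms are exactly what must be matched against each other, and matching them is what requires the ramification-point argument described above.
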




Equipped with these lemmas, we will now prove the symmetry for the other differentials.

\begin{proposition}[RTR\ref{RTR1}]\label{thm:sym}
For any $2g+n\geq2$, the multidifferential $\omega_{g,n+1}$ is symmetric.
\end{proposition}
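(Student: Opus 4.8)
The plan is to prove symmetry by induction on $2g+n$, adapting the Eynard--Orantin argument \cite{EO} for the unrefined case while carefully tracking the additional poles produced by the refinement. The three auxiliary results already in hand are exactly what such an argument needs: Lemma~\ref{lem:poles} pins down the poles of $\omega_{g,n+1}$ in its first variable, Lemma~\ref{lem:GLP} supplies the loop equation (the $\sigma$-invariance and ramification-regularity of $P_{g,n+1}$), and Lemma~\ref{lem:w12,2} disposes of the delicate base case $\omega_{\frac12,2}$.

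First I would reduce the problem to a single transposition. Assuming inductively that every $\omega_{g',n'+1}$ with $2g'+n'<2g+n$ is symmetric, each building block of $\text{Rec}^{\mathscr{Q}}_{g,n+1}(p,J)$ in \eqref{RecQ} is symmetric in $J=(p_1,\dots,p_n)$, and the residue set $\mathcal{R}\cup\sigma(J_0)\cup\widetilde{\mathcal{P}}_+$ is invariant under permutations of $J$; hence $\omega_{g,n+1}(p_0,J)$ is automatically symmetric in $J$, and it remains only to prove invariance under the exchange $p_0\leftrightarrow p_1$. The base cases are covered by Proposition~\ref{prop:Q=0RTR} (which identifies $\omega_{0,3}$ and $\omega_{1,1}$ with their symmetric unrefined counterparts, their $\mathscr{Q}$-dependence vanishing) and by Lemma~\ref{lem:w12,2}.

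For the inductive step I would evaluate $\omega_{g,n+1}(p_0,p_1,J')$ from the recursion \eqref{eq:RTR} with $p_0$ the distinguished variable, and split the integrand $\text{Rec}^{\mathscr{Q}}_{g,n+1}(p;p_1,J')$ into the terms in which $p_1$ enters linearly through $\Delta\omega_{0,2}(p,p_1)$ and those in which $p_1$ is buried inside a higher differential. The latter are symmetric in their own arguments by induction and can be carried along untouched, so the heart of the matter is the $\Delta\omega_{0,2}(\cdot,p_1)$ terms. Using the loop equation of Lemma~\ref{lem:GLP} to trade $\text{Rec}^{\mathscr{Q}}$ at the ramification points for the regular, $\sigma$-invariant $P_{g,n+1}$, I would then form the antisymmetric combination $\omega_{g,n+1}(p_0,p_1,J')-\omega_{g,n+1}(p_1,p_0,J')$ and show that it equals the sum, over an auxiliary variable $p$, of the residues of a single globally meromorphic $1$-form on $\Sigma$; the residue theorem then forces it to vanish once all of its poles are accounted for.

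The hard part will be the bookkeeping of the refinement-specific poles at $\sigma(J_0)$ and $\widetilde{\mathcal{P}}_+$, which have no analogue in \cite{EO}. Because the recursion integrand is no longer $\sigma$-invariant nor supported only at ramification points, deforming contours in the antisymmetric combination produces genuinely new local contributions at $p=\sigma(p_0)$ and $p=\sigma(p_1)$ (where both the kernel ${\rm K}$ and $\omega_{g,n+1}$ are singular by Lemma~\ref{lem:poles}) and at the points of $\widetilde{\mathcal{P}}_+$ (through $\omega_{\frac12,1}$). Controlling these requires the explicit description of $\omega_{\frac12,1}$ in Lemma~\ref{lem:pole1/2,1} together with the residue-freeness RTR\ref{RTR3}, available at lower order by the induction; the crux is to verify that, after these cancellations, the antisymmetric part is the sum of residues of an exact differential in $p$ and therefore vanishes.
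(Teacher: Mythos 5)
Your setup (induction on $2g+n$, reduction to the transposition $p_0\leftrightarrow p_1$, base cases via Proposition~\ref{prop:Q=0RTR} and Lemma~\ref{lem:w12,2}, eventual appeal to Lemma~\ref{lem:GLP}) agrees with the paper, but your inductive step has a genuine gap, located exactly where you declare the problem easy. The terms of ${\rm Rec}^{\mathscr{Q}}_{g,n+1}(p,J)$ in which $p_1$ is buried inside a higher differential cannot be ``carried along untouched'': their symmetry in their own arguments (which is all the induction gives) says nothing about the exchange $p_0\leftrightarrow p_1$, since under that exchange the kernel changes from ${\rm K}(p_0,p)$ to ${\rm K}(p_1,p)$ and $p_0$ replaces $p_1$ inside the higher $\omega$'s, so these terms do not cancel against anything in the antisymmetric combination; they are in fact where all the work lies. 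The paper's mechanism, which your sketch never produces, is: use the inductive symmetry to move $q_0$ (your $p_1$) into the first slot of every $\omega_{\tilde g,\tilde n+1}(q_0,\cdot)$ appearing in the integrand, re-apply the recursion \eqref{eq:RTR} in $q_0$ there, obtaining a double-residue representation of $\omega_{g,n+2}(p_0,q_0,J)$; then exchange the order of the two residues (the Eynard--Orantin trick), so that the entire potential asymmetry is localized in one extra ${\rm Res}_{p=q}$ term. By Lemma~\ref{lem:poles} only the $\omega_{0,2}(p,q)$ factors contribute to that residue, and after evaluating it and integrating by parts, the non-manifestly-symmetric remainder assembles into $\sum_{s\in\mathcal{R}^*}{\rm Res}_{q=s}\,{\rm K}(p_0,q)\cdot\Delta\omega_{0,2}(q,q_0)\cdot P_{g,n+1}(q,J)/(2\omega_{0,1}(q))$ (Lemma~\ref{lem:contour} shrinks the residue set to $\mathcal{R}^*$), which vanishes by Lemma~\ref{lem:GLP} together with \cite[Lemma A.1]{EO2}. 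Without this double recursion, your assertion that the antisymmetric combination ``equals the sum of residues of a single globally meromorphic $1$-form'' is a restatement of the goal, not an argument.

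Two further corrections. First, in the inductive step ($2g+n\geq 3$) the residues at $\widetilde{\mathcal{P}}_+$ are not ``genuinely new local contributions'' to be cancelled later: they vanish at the outset, because by the inductive hypothesis and Lemma~\ref{lem:poles} the only possible poles of the integrand there come from the single factor of $\omega_{\frac12,1}$, which are at worst simple by Lemma~\ref{lem:pole1/2,1}, while ${\rm K}(p_0,p)$ has a zero at each point of $\widetilde{\mathcal{P}}_+$ because $ydx$ has a pole there. Second, you invoke RTR\ref{RTR3} ``at lower order by the induction'', but in the paper residue-freeness is established only after symmetry (and its completion even relies on RTR\ref{RTR4}); if you want it inside the symmetry induction you must prove it independently level by level, otherwise the logic is circular. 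The paper's symmetry proof needs only Lemmas~\ref{lem:pole1/2,1}, \ref{lem:poles}, \ref{lem:GLP}, \ref{lem:contour} and the cited lemma of \cite{EO2}, not RTR\ref{RTR3}.
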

\begin{proof}
We will proceed by induction on $m:=2g+n$. 

Since it is clear from the recursion formula \eqref{RecQ} that $\omega_{g,n+2}(p_0,q_0,J)$ is symmetric under the permutation of the last  $n+1$ entries, it is sufficient to show symmetry in the first two variables:
\begin{equation}
    \omega_{g,n+2}(p_0,q_0,J)=\omega_{g,n+2}(q_0,p_0,J).
\end{equation}

From Remark \ref{rem:0n} we have that $\omega_{0,3}$ is symmetric.  There is nothing to prove for $\omega_{1,1}$, and Lemma~\ref{lem:w12,2} gives that $\omega_{\frac12,2}$ is also symmetric. Let us then assume that $\omega_{g,n+2}(p_0,q_0,J)$ are symmetric under permutation of arguments for all $g,n$, up to $2g+n=m\geq1$. Then, Lemma~\ref{lem:poles} holds not only with respect to $p_0$, but also with respect to the other arguments $q_0,J$. For all $g,n$ with $2g+n=m+1$, the recursion formula \eqref{eq:RTR} simply reads
\allowdisplaybreaks[0]
\begin{align}
    \omega_{g,n+2}(p_0,q_0,J)=&-2\left(\sum_{r\in\mathcal{R}^*}\underset{p=r}{\text{Res}}+\!\!\!\!\sum_{r\in\sigma(J_0)}\underset{p=r}{\text{Res}}+\underset{p=\sigma(q_0)}{\text{Res}}\right){\rm K}(p_0,p) \nonumber \\& \hspace{0.25cm}\times\Biggl(\Delta\omega_{0,2}(p,q_0)\cdot\omega_{g,n+1}(p,J)
    +\sum_{i=1}^n\Delta\omega_{0,2}(p,p_i)\cdot\omega_{g,n+1}(p,q_0,\widehat{J}_i)\nonumber\\
&\hspace{1.5cm}+2\sum^{**}_{\substack{g_1+g_2=g \\ J_1\sqcup J_2=J}} \omega_{g_1,n_1+2}(p,q_0,J_1)\cdot \omega_{g_2,n_2+1}(p,J_2) \nonumber\\
&\hspace{1.5cm}+\omega_{g-1,n+3}(p,p,q_0,J)+\mathscr{Q}\,dx(p)\cdot d_p\left(\frac{ \omega_{g-\frac12,n+2}(p,q_0,J)}{dx(p)}\right)\Biggr).\label{sym1}
\end{align}
\allowdisplaybreaks[1]
The integrand has no poles at $\widetilde{\mathcal{P}}_+$ and thus we have removed it from the summation. This follows by noticing that by the inductive assumption and Lemma \ref{lem:poles}, all $\omega_{g,n}$ appearing on the right hand side except $(g,n)=(\frac{1}{2},1)$ are regular at all $p\in\widetilde{\mathcal{P}}_+$, so the only possible poles might appear from terms in the second sum containing a single factor of $\omega_{\frac{1}{2},1}$. By Lemma \ref{lem:pole1/2,1}, the only possible poles at $\widetilde{\mathcal{P}}_+$ are simple, but since $\widetilde{\mathcal{P}}_+\subset \mathcal{P}_+$ are poles of $ y dx$, the kernel ${\rm K}(p_0,p)$ must have a zero there, so that the integrand is regular.

Since the remaining procedure is heavily computational, let us first describe the strategy:
\begin{enumerate}
    \item Since $\omega_{g,n+2}$ are symmetric up to $2g+n=m$ by the inductive assumption, we can move $q_0$ to the first argument everywhere it appears in the integrand. Then we can re-apply the recursion with respect to $q_0$ in the integrand. That is, whenever $\omega_{\tilde{g},\tilde{n}+1}(q_0,\,\cdot\,)$ appears for any $\tilde{g},\tilde{n}$ with $2\tilde{g}+\tilde{n}\geq2$, we replace it with the right hand side of the recursion formula \eqref{eq:RTR}. Note that the first term in the second line in \eqref{sym1} remains unchanged since $\omega_{0,2}(q_0,p)$ is an initial datum.
    \item As a consequence, the whole expression will be a sum of terms each consisting of two residue computations, one appearing in \eqref{sym1}, and one coming from the recursion for each $\omega_{\tilde{g},\tilde{n}+1}(q_0,\,\cdot\,)$ (all of whose dummy variables we will call $q$, since no product of such terms appears):
    \begin{equation}\label{eq:doublesum}
         \left(\sum_{r\in\mathcal{R}^*}\underset{p=r}{\text{Res}}+\!\!\!\!\sum_{r\in\sigma(J)}\underset{p=r}{\text{Res}}+\underset{p=\sigma(p_0)}{\text{Res}}+\underset{p=\sigma(q_0)}{\text{Res}}\right)\left(\sum_{s\in\mathcal{R}^*}\underset{q=s}{\text{Res}}+\!\!\!\!\sum_{s\in\sigma(J)}\underset{q=s}{\text{Res}}+\underset{q=\sigma(p)}{\text{Res}}+\underset{q=\sigma(q_0)}{\text{Res}}\right).
    \end{equation}
    \item When one exchanges the order of the two residues (c.f. \cite[proof of Theorem 4.6]{EO}), the integrand in the double recursion shows that \eqref{eq:doublesum} becomes
    \begin{equation}\label{eq:doublesum2}
        \left(\sum_{s\in\mathcal{R}^*}\underset{q=s}{\text{Res}}+\sum_{s\in\sigma(J)}\underset{q=s}{\text{Res}}+\underset{q=\sigma(p_0)}{\text{Res}}+\underset{q=\sigma(q_0)}{\text{Res}}\right)\left(\underset{p=q}{\text{Res}}+\sum_{r\in\mathcal{R}^*}\underset{p=r}{\text{Res}}+\!\!\!\!\sum_{r\in\sigma(J)}\underset{p=r}{\text{Res}}+\underset{p=\sigma(q)}{\text{Res}}+\underset{p=\sigma(p_0)}{\text{Res}}\right).
    \end{equation}
    It is clear the only difference between these two expressions is the term arising from the extra ``$\underset{p=q}{\text{Res}}$'' appearing in the second parentheses, so we focus only on such terms. \item By the inductive assumption, Lemma \ref{lem:poles} implies that the only poles contributing to ``$\underset{p=q}{\text{Res}}$'' are due to $\omega_{0,2}(p,q)$ appearing in a term of the integrand. One then checks that after taking $\underset{p=q}{\text{Res}}$, some terms will become manifestly symmetric in $p_0\leftrightarrow q_0$. The collection of remaining terms will become invariant under the involution, so that the second set of residues will be reduced to ramification points $\mathcal{R}^*$ by Lemma~\ref{lem:contour}. 
    \item Furthermore, one can show that the collection of the remaining terms coincides $P_{g,n+1}$, hence it is holomorphic at $\mathcal{R}^*$ by the loop equation (Lemma~\ref{lem:GLP}). Therefore, it vanishes after taking the residues at $\mathcal{R}^*$.
\end{enumerate}

We omit writing down most of terms in the double-recursion as they are straightforward and tedious, but after following the above steps, seemingly asymmetric terms in $p_0\leftrightarrow q_0$ are collected into the following form:
\begin{align}
    &\omega_{g,n+2}(p_0,q_0,J)\nonumber\\
    &\hspace{0.0cm}=(\text{sym}_1)+\left(\sum_{r\in\mathcal{R}^*}\underset{p=r}{\text{Res}}+\!\!\!\!\sum_{r\in\sigma(J)}\underset{p=r}{\text{Res}}+\underset{p=\sigma(p_0)}{\text{Res}}+\underset{p=\sigma(q_0)}{\text{Res}}\right){\rm K}(p_0,p)\cdot\Delta\omega_{0,2}(p,q_0)\cdot\omega_{g,n+1}(p,J)\nonumber\\
    &\hspace{1.55cm}+\left(\sum_{s\in\mathcal{R}^*}\underset{q=s}{\text{Res}}+\!\!\!\!\sum_{s\in\sigma(J)}\underset{q=s}{\text{Res}}+\underset{q=\sigma(p_0)}{\text{Res}}+\underset{q=\sigma(q_0)}{\text{Res}}\right)\underset{p=q}{\rm Res}\, {\rm K}(p_0,p)\cdot {\rm K}(q_0,q)\nonumber\\
    &\hspace{2cm}\times \Bigg(\omega_{0,2}(p,q)\cdot\Bigg(2\sum_{\substack{g_1+g_2=g\\J_1\sqcup J_2=J}}\omega_{g_1,n_1+1}(p,J_1)\cdot\omega_{g_2,n_2+1}(q,J_2)+\omega_{g-1,n+2}(p,q,J)\Bigg)\nonumber\\
    &\hspace{4cm}+\mathscr{Q}\,dx(p)\cdot\omega_{g-\frac12,n+1}(p,J)\cdot d_p\left(\frac{\omega_{0,2}(p,q)}{dx(p)}\right)\Bigg).\label{sym2}
\end{align}
where $(\text{sym}_1)$ denotes terms manifestly symmetric in $p_0\leftrightarrow q_0$. We then evaluate the residue at $p=q$ and integrate by parts to find the last three lines of \eqref{sym2} give
\begin{align}
   \left(\sum_{s\in\mathcal{R}^*}\underset{q=s}{\text{Res}}+\!\!\!\!\sum_{s\in\sigma(J)}\underset{q=s}{\text{Res}}+\underset{q=\sigma(p_0)}{\text{Res}}+\underset{q=\sigma(q_0)}{\text{Res}}\right){\rm K}(p_0,q) \cdot\Delta\omega_{0,2}(q,q_0)\cdot\frac{\text{Rec}_{g,n+1}(q,J)}{2\omega_{0,1}(q)}+(\text{sym}_2),\label{sym3}
\end{align}
where $(\text{sym}_2)$ again denotes manifestly symmetric terms in $p_0\leftrightarrow q_0$, and we have omitted tedious computational manipulations required to obtain \eqref{sym3} from \eqref{sym2}. Together with the first term in \eqref{sym2}, we obtain
 \begin{equation}
     \omega_{g,n+2}(p_0,q_0,J)= \sum_{s\in\mathcal{R}^*}\underset{q=s}{\text{Res}}\,{\rm K}(p_0,q) \cdot\Delta\omega_{0,2}(q,q_0)\cdot\frac{P_{g,n+1}(q,J)}{2\omega_{0,1}(q)}+(\text{sym}_1)+(\text{sym}_2),\label{sym4}
 \end{equation}
where we used Lemma~\ref{lem:GLP} and Lemma~\ref{lem:contour} to simplify the expression. Finally, Lemma~\ref{lem:GLP} and \eqref{Pg,n+1-1} imply that the first term in \eqref{sym4} vanishes by \cite[Lemma A.1]{EO2}, giving the result.
\end{proof}

\subsubsection{Proof of RTR\ref{RTR2}, RTR\ref{RTR3}, and RTR\ref{RTR4}}

Having established the symmetry of $\omega_{g,n+1}$, we now prove the remaining properties.

\begin{corollary}[RTR\ref{RTR2}]\label{coro:poles}
For all $g,n$ with $2g+n\geq2$, all poles of $\omega_{g,n+1}(p_0,J)$ (in any variable) lie in $\mathcal{R}^*\cup\sigma(J_0)$;
\end{corollary}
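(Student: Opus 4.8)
The plan is to deduce this statement directly from the two ingredients already in hand: the pole structure in the distinguished first variable (Lemma \ref{lem:poles}) and the full symmetry of $\omega_{g,n+1}$ just established (Proposition \ref{thm:sym}, i.e.\ RTR\ref{RTR1}). Lemma \ref{lem:poles} only controls the poles of $\omega_{g,n+1}(p_0,J)$ as a function of $p_0$, whereas the corollary asserts control in \emph{every} variable; symmetry is precisely the tool that promotes the single-variable statement to the uniform one. This is why RTR\ref{RTR2} is stated and proven only \emph{after} RTR\ref{RTR1}, even though in the unrefined case the analogue TR\ref{TR2} was nearly immediate from the recursion.

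Concretely, I would first record that by Lemma \ref{lem:poles}, for $2g+n\geq 2$ the poles of $\omega_{g,n+1}(p_0,J)$ in $p_0$ (with $J=(p_1,\dots,p_n)$ held fixed) lie in $\mathcal{R}^*\cup\sigma(J)$. Then I would invoke the invariance of $\omega_{g,n+1}$ under permutation of all $n+1$ arguments: for any index $i\in\{0,1,\dots,n\}$, applying the transposition exchanging $p_0$ and $p_i$ transports the first-variable statement to the $i$-th variable, showing that the poles of $\omega_{g,n+1}$ in the variable $p_i$ lie in $\mathcal{R}^*\cup\sigma\big(\{p_j : j\neq i\}\big)$.

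Finally I would note the trivial containment $\{p_j : j\neq i\}\subseteq J_0$, so $\sigma\big(\{p_j:j\neq i\}\big)\subseteq \sigma(J_0)$, and hence the poles in each variable $p_i$ lie in $\mathcal{R}^*\cup\sigma(J_0)$; taking the union over all $i$ gives the claim. I do not expect a genuine obstacle here, since all the substantive work — the residue and contour analysis behind Lemma \ref{lem:poles}, and the delicate induction behind the symmetry — has already been carried out, and what remains is only the bookkeeping that carries the single-variable statement across the symmetric group. The one point worth a moment's care is that the excluded value $\sigma(p_i)$ in the $i$-th variable is harmlessly reabsorbed when passing from $\sigma\big(\{p_j:j\neq i\}\big)$ to the full set $\sigma(J_0)$, so the uniform statement phrased in terms of $\sigma(J_0)$ is genuinely weaker and causes no difficulty.
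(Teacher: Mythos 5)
Your proposal is correct and is exactly the paper's argument: the paper's proof of Corollary~\ref{coro:poles} reads ``Immediate from Theorem~\ref{thm:sym} and Lemma~\ref{lem:poles},'' i.e.\ the single-variable pole structure transported to all variables by the symmetry, which is precisely what you spell out. Your additional remark that the set $\sigma\bigl(\{p_j : j\neq i\}\bigr)$ sits inside the larger $\sigma(J_0)$, so the stated form is a harmless weakening, is the right (and only) bookkeeping point.
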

\begin{proof}
Immediate from Theorem~\ref{thm:sym} and Lemma~\ref{lem:poles}.
\end{proof}

The proofs of the other two properties are intertwined: we will first partially prove RTR\ref{RTR3} in order to prove RTR\ref{RTR4}, and then use RTR\ref{RTR4} to complete the proof.
\begin{proposition}\label{lem:residue1}
For all $2g,n\in\mathbb{Z}_{\geq0}$, $\omega_{g,{n+2}}$ is residue-free in the first (thus, any) variable:
\begin{equation}
    \underset{p=o}{{\rm Res}}\,\omega_{g,n+2}(p,p_0,J)=0;
\end{equation}
at any $o \in \Sigma$.
\end{proposition}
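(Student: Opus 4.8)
The plan is to leverage the two properties already established. By the symmetry RTR\ref{RTR1} it suffices to control the residue in the first slot, and by RTR\ref{RTR2} (Corollary~\ref{coro:poles}) the poles of $\omega_{g,n+2}(p,p_0,J)$ in $p$ are confined to $\mathcal{R}^*\cup\{\sigma(p_0),\sigma(p_1),\dots,\sigma(p_n)\}$. Hence I only need to show the residue vanishes at each effective ramification point and at each $\sigma(p_i)$, and I would run the argument by induction on $m=2g+n$ so that every lower $\omega_{g',n'+1}$ feeding $\text{Rec}^{\mathscr{Q}}_{g,n+2}$ may be assumed to obey RTR\ref{RTR1}--RTR\ref{RTR3}. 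The two families of poles are of genuinely different character, and I would treat them separately.

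For a ramification point $r\in\mathcal{R}^*$ I would argue through the loop equation of Lemma~\ref{lem:GLP}. Since $P_{g,n+2}=2\omega_{0,1}\,\omega_{g,n+2}+\text{Rec}^{\mathscr{Q}}_{g,n+2}$ is $\sigma$-invariant and holomorphic at $r$, solving $\omega_{g,n+2}=\tfrac{1}{2\omega_{0,1}}\bigl(P_{g,n+2}-\text{Rec}^{\mathscr{Q}}_{g,n+2}\bigr)$ and splitting into $\sigma_r$-parities shows that the only possibly dangerous piece is the $\sigma_r$-even part of $\omega_{g,n+2}$, namely $-\,(\text{$\sigma_r$-odd part of }\text{Rec}^{\mathscr{Q}}_{g,n+2})/(2\omega_{0,1})$; the remaining pieces are manifestly residue-free, since an odd $1$-form has even Laurent coefficients and $P_{g,n+2}/\omega_{0,1}$ is $\sigma_r$-odd. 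Everything thus reduces to showing that the $\sigma_r$-odd part of $\text{Rec}^{\mathscr{Q}}_{g,n+2}$ carries no order-one term at $r$, equivalently that its quotient by $\omega_{0,1}$ is residue-free. This I would verify term by term from \eqref{RecQ}, using the inductive pole/residue data of the lower differentials together with the fact that the refinement term $\mathscr{Q}\,dx\cdot d_p(\,\cdot\,/dx)$ is a total $p$-derivative. The conceptual reason this succeeds is that the $p$-dependence at $r$ is carried by the $\sigma$-anti-invariant kernel ${\rm K}(p,q)=\tfrac12\int_{\sigma(q)}^{q}B(p,\cdot)\big/\bigl(\Delta y(q)\,dx(q)\bigr)$ (anti-invariance following from $\sigma$-invariance of $B$), and the loop equation is simply the efficient way to package this together with the contribution of the moving diagonal residue $q=\sigma(p)$, whose location collides with $r$ as $p\to r$.

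For the new poles at $p=\sigma(p_i)$ — which do not occur at all in the unrefined case — I would compute the principal part directly, treating $\sigma(p_0)$ by symmetry. Such a pole is sourced only by the factor $\omega_{0,2}(\sigma(\,\cdot\,),p_0)=B(\sigma(\,\cdot\,),p_0)$ entering through $\Delta\omega_{0,2}$ in $\text{Rec}^{\mathscr{Q}}$ and propagated by the kernel, together with the refinement derivative term. Because $B$ has purely a double pole with vanishing residue on the diagonal, and because applying $d_p$ to a residue-free principal part again yields a residue-free one, an induction on $m$ would show that the residue at $\sigma(p_0)$ vanishes: the leading pole (of order $\ge 2$) is manifestly residue-free, being generated by $B$ and its $p$-derivatives, while the potential simple-pole corrections cancel by virtue of the inductive residue-freeness of the lower $\omega$'s.

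I expect the principal-part analysis at the new poles $\sigma(p_i)$, together with the diagonal residue $q=\sigma(p)$ appearing in the ramification-point case, to be the main obstacle. Both are features with no unrefined analogue: the poles at $\sigma(p_i)$ move with the external insertions and propagate through the quadratic and $\omega_{g-1,n+3}(p,p,\,\cdot\,)$ terms of the recursion, so the delicate point is to confirm that no uncancelled simple-pole contribution is ever generated once the $\mathscr{Q}$-dependent derivative term is included. This is precisely why the loop equation is invoked to close the ramification-point estimate, rather than the bare parity count that suffices in the unrefined setting.
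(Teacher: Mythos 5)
There is a genuine gap, and it sits precisely where your plan asserts the work can be done routinely: the claim that the residue of the $\sigma_r$-odd part of ${\rm Rec}^{\mathscr{Q}}_{g,n+2}$ divided by $2\omega_{0,1}$ vanishes ``term by term, using the inductive pole/residue data of the lower differentials.'' Two things break. First, the ingredients of ${\rm Rec}^{\mathscr{Q}}_{g,n+2}(p,p_0,J)$, viewed as forms in the recursion variable $p$, include the one-variable differentials $\omega_{\frac12,1}(p)$ and $\omega_{g',1}(p)$ (from the terms $\omega_{g_1,n_1+1}(p,J_1)\cdot\omega_{g_2,n_2+1}(p,J_2)$ with $J_1=\varnothing$). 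Your induction is a statement only about differentials with at least two variables, so it says nothing about these; worse, $\omega_{\frac12,1}$ genuinely \emph{fails} to be residue-free --- by Lemma~\ref{lem:pole1/2,1} it has residue $-\mathscr{Q}/2$ at every ramification point where $y$ vanishes --- and residue-freeness of $\omega_{g',1}$, $g'\geq 1$, is exactly what the paper can only prove \emph{after} RTR\ref{RTR4} (Proposition~\ref{prop:residue2}), which in turn relies on the present proposition; so that input is partly false and partly unavailable. Second, even for residue-free ingredients, residues are not the right inductive data: after dividing by $\omega_{0,1}$, which has a double zero at an effective ramification point, the residue of each term is controlled by subleading Laurent coefficients, and individual terms (e.g.\ $[\omega_{\frac12,1}(p)\,\omega_{g-\frac12,n+2}(p,p_0,J)]$ or the $\mathscr{Q}$-derivative term) have nonvanishing residues in general; the vanishing is a cancellation \emph{across} terms, for which you give no mechanism. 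Note also that Lemma~\ref{lem:GLP} does no work here: holomorphy of $P_{g,n+2}$ constrains the $\sigma_r$-anti-invariant part of $\omega_{g,n+2}$, which is residue-free for trivial parity reasons, while the residue lives entirely in the invariant part $-({\rm Rec}^{\mathscr{Q}}_{g,n+2})^{\rm odd}/2\omega_{0,1}$, about which the loop equation is silent. Your treatment of the poles at $\sigma(p_i)$ has a similar soft spot: ``applying $d_p$ to a residue-free principal part yields a residue-free one'' is true for exact forms, but the relevant quantity is $d_p(\,\cdot\,/dx)$ subsequently divided by $2y$, and dividing an exact form by a nonconstant function can recreate residues, so the simple-pole bookkeeping there is not actually closed.

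The paper's proof avoids all of this with one move you have available (RTR\ref{RTR1} is already proven at this stage) but do not use: it computes the residue in a \emph{spectator} variable rather than the recursion variable. In the recursion \eqref{eq:RTR}, or in its rearranged form \eqref{pole2}, the dependence on $p_1$ enters only through $\omega_{0,2}(\cdot,p_1)$ and through lower multi-variable $\omega$'s carrying $p_1$ in a non-recursion slot --- all residue-free in $p_1$ by induction and by the properties of $B$ --- while the troublesome $\omega_{\frac12,1}(p)$ and $\omega_{g',1}(p)$ are constant in $p_1$ and drop out of the residue entirely; after justifying the exchange of the order of residues, the $p_1$-residue vanishes, and symmetry transfers this to the first variable. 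To salvage your direct first-variable attack you would have to establish the ramification-point cancellations by explicit Laurent computation, which effectively entangles you in the RTR\ref{RTR4}/RTR3 circle that the paper's ordering (Proposition~\ref{lem:residue1} before Proposition~\ref{thm:Rinverse} before Proposition~\ref{prop:residue2}) is designed to avoid.
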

\begin{proof}
We proceed by induction on $2g+n\geq1$. By Remark \ref{rem:0n} all $\omega_{0,n}$ agree with their unrefined versions, and thus have no residues (TR\ref{TR3}). Let us next consider $\omega_{\frac12,2}$. It follows from the recursion formula \eqref{eq:RTR} that for any $o\in\Sigma$:
\begin{align}
&\underset{p_1=o}{\text{Res}}\,\omega_{\frac12,2}(p_0,p_1)\nonumber\\
    &=-2\underset{p_1=o}{\text{Res}}\left(\sum_{r\in\mathcal{R}}\underset{p=r}{\text{Res}}+\underset{p=\sigma(p_0)}{\text{Res}}+\underset{p=\sigma(p_1)}{\text{Res}}\right){{\rm K}(p_0,p)}\nonumber\\
    &\hspace{3cm}\times \left(\Delta\omega_{0,2}(p,p_1)\cdot\omega_{\frac12,1}(p)-\mathscr{Q}\,dx(p)\cdot d_{p}\left(\frac{ \omega_{0,2}(\sigma(p),p_1)}{dx(p)}\right)\right)\nonumber\\
    &=-2\left(\sum_{r\in\mathcal{R}}\underset{p=r}{\text{Res}}+\underset{p=\sigma(p_0)}{\text{Res}}+\underset{p=\sigma(p_1)}{\text{Res}}\right)\left(\underset{p_1=o}{\text{Res}}+\underset{p_1\in \tilde P}{\text{Res}}\right){{\rm K}(p_0,p)}\nonumber\\
    &\hspace{3cm}\times\left(\Delta\omega_{0,2}(p,p_1)\cdot\omega_{\frac12,1}(p)-\mathscr{Q}\,dx(p)\cdot d_{p}\left(\frac{ \omega_{0,2}(\sigma(p),p_1)}{dx(p)}\right)\right)\nonumber\\
    &=0,
\end{align}
where $\tilde P$ is a collection of points that may appear when one exchanges the order of residues, depending on where $o$ is. The last line follows since the $p_1$-residue on the second line vanishes. To see this, note that there is no $\omega_{\frac12,1}(p_1)$ in the recursion: what appears is $\omega_{\frac12,1}(p)$, hence $\omega_{\frac12,2}(p_0,p_1)$ has no residue even though $\omega_{\frac12,1}(p_0)$ does. 

For other $\omega_{g,n+2}$ we can prove the statement by induction. The trick is the same, and it is easy to show that exchanging the order of residues does not give any contribution by the same reason as above. Alternatively, one can take the expression \eqref{pole2} and compute the residue with respect to the second variable.
\end{proof}

We now prove RTR\ref{RTR4}. 

\begin{proposition}[RTR\ref{RTR4}]\label{thm:Rinverse}
For any $2g+n\geq2$ with $n>0$, we have
\begin{equation}
(2g-2+n)\,\omega_{g,n}(J)=-\left(\sum_{r\in\mathcal{R}^*}\underset{p=r}{{\rm Res}}+\sum_{r\in\sigma(J)}\underset{p=r}{{\rm Res}}\right)\Phi(p)\cdot\omega_{g,n+1}(p,J)\label{Rinverse11},
\end{equation}
where $\mathcal{R}^*$ is the set of effective ramification points, and $\Phi$ is any primitive of $ydx$.
\end{proposition}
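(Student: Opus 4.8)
\textbf{Proof proposal for RTR\ref{RTR4} (Proposition \ref{thm:Rinverse}).}

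The plan is to mimic the proof strategy of the unrefined dilaton/string equation (TR\ref{TR4}, \cite[Theorem 4.7]{EO}), but carefully tracking the extra residue contributions at $\sigma(J_0)$ and the new $\mathscr{Q}$-dependent term in $\text{Rec}_{g,n+1}^{\mathscr{Q}}$. First I would substitute the recursion formula \eqref{eq:RTR} for $\omega_{g,n+1}(p,J)$ into the right-hand side of \eqref{Rinverse11}, obtaining a double-residue expression with the outer residue in $p$ (weighted by $\Phi(p)$) running over $\mathcal{R}^*\cup\sigma(J)$ and the inner residue (call its variable $p'$) coming from the recursion, running over $\mathcal{R}^*\cup\sigma(J_0')\cup\widetilde{\mathcal{P}}_+$. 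The key move, exactly as in \cite{EO}, is to exchange the order of the two residues; this is justified because the only coincident poles occur where the two contours meet, and swapping produces an additional contribution from $\underset{p=p'}{\text{Res}}$. I would then use that $\Phi$ is a primitive of $\omega_{0,1}=ydx$, so that $d\Phi=ydx$ and the combination $\Phi(p)\,{\rm K}(p_0,p)$ can be integrated by parts against the recursion kernel.

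The heart of the computation is recognizing that after the residue swap, the inner integrand reorganizes into $\text{Rec}_{g,n+1}^{\mathscr{Q}}$ itself, multiplied by a factor that counts the Euler characteristic. Concretely, when $\Phi(p)$ is paired with each term of $\text{Rec}_{g,n+1}^{\mathscr{Q}}(p,J)$ — the $\Delta\omega_{0,2}$ terms, the quadratic $\omega_{g_1}\cdot\omega_{g_2}$ terms, the $\omega_{g-1,n+2}$ term, and the $\mathscr{Q}\,dx\cdot d_p(\omega_{g-\frac12,n+1}/dx)$ term — integration by parts and the defining relation $\Phi'(p)=y(p)x'(p)$ convert the $2\omega_{0,1}(p)\omega_{g,n+1}$ piece of the loop equation \eqref{GLP} into the leading multiple of $\omega_{g,n}$. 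The bookkeeping of how many $\omega$-factors each term contributes is what produces the prefactor $(2g+n-2)$: each splitting term counts its arguments, and the combinatorics match precisely the Euler characteristic $2g-2+n$. I expect the $\mathscr{Q}$-dependent term to integrate by parts cleanly since $d_p$ acting under $\Phi(p)$ against $dx(p)$ telescopes, contributing no net obstruction.

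The main obstacle will be controlling the extra residues at $\sigma(J)$ in the outer sum and at $\sigma(J_0')$ and $\widetilde{\mathcal{P}}_+$ in the inner sum, which have no analogue in the unrefined proof. I would argue that the residues at $\widetilde{\mathcal{P}}_+$ vanish because $\Phi(p)\cdot{\rm K}(p_0,p)$ is regular there (the kernel ${\rm K}$ has a zero at poles of $ydx$, as used in the proof of Proposition \ref{thm:sym}), and that the $\sigma(J)$ contributions either cancel in pairs under the involution via Lemma~\ref{lem:contour}, or reassemble into the correct term using RTR\ref{RTR3} (Proposition~\ref{lem:residue1}), which guarantees residue-freeness and hence independence of the choice of primitive $\Phi$. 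The delicate point is that RTR\ref{RTR3} has only been \emph{partially} established at this stage (Proposition~\ref{lem:residue1} covers $\omega_{g,n+2}$ with the distinguished variable separate), so I must ensure the argument only invokes the residue-freeness already proven, and indeed the proof of RTR\ref{RTR4} here is designed to feed back into completing RTR\ref{RTR3}. Once the spurious residues are shown to cancel and the loop equation (Lemma~\ref{lem:GLP}) removes the holomorphic-at-$\mathcal{R}^*$ ambiguity, the remaining identity is exactly \eqref{Rinverse11}.
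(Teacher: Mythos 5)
Your outline gets several peripheral points right (the vanishing of the $\widetilde{\mathcal{P}}_+$ contributions because the kernel has zeros at the poles of $ydx$, the need to invoke only the already-proven part of RTR3, and the residue exchange as a central move), but it is missing the two ideas that actually make the proof work. First, the argument is an induction on $2g+n$, and the factor $(2g+n-2)$ is \emph{not} produced by any self-contained ``bookkeeping of $\omega$-factors'': it arises as $(2g+n-3)+1$, where $(2g+n-3)$ comes from applying the inductive hypothesis to every term of $\text{Rec}^{\mathscr{Q}}_{g,n+1}$ that is free of $\omega_{0,2}$, and the extra $+1$ comes from a separate, explicit evaluation of the $\omega_{0,2}$-containing terms using the identities $\bigl(\sum_{r\in\mathcal{R}^*}\underset{p=r}{{\rm Res}}+\sum_{r\in\sigma(J)}\underset{p=r}{{\rm Res}}\bigr)\Phi(p)\,\omega_{0,2}(p,p_i)=0$ and $\bigl(\cdots\bigr)\Phi(p)\,\omega_{0,2}(\sigma(p),p_i)=-\omega_{0,1}(p_i)$; these reconstitute the kernel denominator $2ydx$, and a sum-of-residues argument (using RTR2) then picks up exactly $\underset{q=p_1}{{\rm Res}}$, i.e.\ one copy of $\omega_{g,n}(J)$. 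Your proposal asserts the count (``the combinatorics match precisely the Euler characteristic'') without supplying this mechanism, and it never addresses the base case of the induction: for $(g,n)=(\tfrac12,1)$ one must prove $\bigl(\sum_{r\in\mathcal{R}^*}\underset{p=r}{{\rm Res}}+\underset{p=\sigma(p_0)}{{\rm Res}}\bigr)\Phi(p)\,\omega_{\frac12,2}(p,p_0)=0$, which in the paper requires the explicit ${\bm \mu}$-dependent form of $\omega_{\frac12,2}$ from Lemma \ref{lem:w12,2} and the observation that the double-pole contributions at ramification points die because $\Phi$ has no linear term in the local coordinate there. Without this base case the induction has nothing to stand on.

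Second, there is a structural inconsistency in your substitution. You substitute the recursion for $\omega_{g,n+1}(p,J)$ in the $\Phi$-weighted variable $p$ itself, so that $\text{Rec}^{\mathscr{Q}}$ carries the dummy variable $p'$ and $\Phi(p)$ pairs only with the kernel ${\rm K}(p,p')$; after the collision residues (which occur at \emph{both} $p=p'$ and $p=\sigma(p')$, not only at $p=p'$ as you state) the $\Phi$-dependence enters only through the combination $\Phi(p')-\Phi(\sigma(p'))$ multiplying the whole of $\text{Rec}^{\mathscr{Q}}(p',J)$, and there is then no way to ``pair $\Phi(p)$ with each term of $\text{Rec}^{\mathscr{Q}}_{g,n+1}(p,J)$'' as your third paragraph assumes. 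The paper instead uses the already-established symmetry RTR1 to apply the recursion in a \emph{spectator} variable $p_1\in J$, keeping $p$ as an argument inside $\text{Rec}^{\mathscr{Q}}_{g,n+1}(q,p,\widehat{J}_1)$; this is precisely what makes the inductive hypothesis applicable factor by factor and yields the $(2g+n-3)$ count. As written, the two halves of your argument (the substitution and the claimed reorganization) do not fit together, and the missing induction and base case constitute a genuine gap rather than a detail to be filled in.
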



\begin{proof} 
We note first that the right hand side of \eqref{Rinverse11} is indeed well-defined by  Proposition~\ref{lem:residue1}. Note also that Remark \ref{rem:0n} and TR4 implies that \eqref{Rinverse11} holds between arbitrary $\omega_{0,n}$ and $\omega_{0,n+1}$ with $n>1$. In particular, the result will hold for $2g+n=2$ once we check it holds between $\omega_{\frac{1}{2},1}$ and $\omega_{\frac{1}{2},2}$.

In the proof of Lemma \ref{lem:w12,2} it is shown that $\omega_{\frac{1}{2},{2}}$ can be massaged into the form 
\begin{align}
    \omega_{\frac12,2}(p_0,p_1)=\mathscr{Q}\left(\sum_{r\in\mathcal{R}^*}\underset{p=r}{\text{Res}}+\!\!\underset{p=\sigma(p_0)}{\text{Res}}\!+\!\!\underset{p=\sigma(p_1)}{\text{Res}}\right)&\Biggl(\frac{\Delta\omega_{0,2}(p,p_0)\cdot\Delta\omega_{0,2}(p,p_1)}{4\, y(p)dx(p)}\nonumber\\
    &-{\rm K}(p_0,p)\!\cdot\!\Delta\omega_{0,2}(p,p_1)\!\cdot\!\!\!\sum_{q\in \widetilde{\mathcal{P}}_+}\,\mu_q\,{\rm ord}_q(y)\eta_q(p)\Biggr).\label{w1/2,22again}
\end{align}
It is also shown that the contribution to $\omega_{\frac{1}{2},2}$ from evaluating the residues of the second term in the integrand above has a double pole at $p_0=r\in\mathcal{R}^*$ when $r$ is a double zero of $\omega_{0,1}$ and it vanishes otherwise. In the former case, we note that the expansion in a coordinate $z$ centred at $r\in\mathcal{R}^*$ of the primitive $\Phi(z)=\Phi(r)+c z^3+\ldots$ has a constant term and third order term, but no linear term, thus, we find that the second line of \eqref{w1/2,22again} has no contribution to \eqref{Rinverse11}. Thus, we have:

\begin{align}
    &\left(\sum_{r\in\mathcal{R}^*}\underset{p=r}{\text{Res}}+\underset{p=\sigma(p_0)}{\text{Res}}\right)\Phi(p)\cdot \omega_{\frac12,2}(p,p_0)\nonumber\\
    &=\mathscr{Q}\left(\sum_{r\in\mathcal{R}^*}\underset{p=r}{\text{Res}}+\underset{p=\sigma(p_0)}{\text{Res}}\right)\left(\sum_{s\in\mathcal{R}^*}\underset{q=s}{\text{Res}}+\underset{q=\sigma(p_0)}{\text{Res}}+\underset{q=\sigma(p)}{\text{Res}}\right)\Phi(p)\,\frac{\Delta\omega_{0,2}(q,p_0)\cdot\Delta\omega_{0,2}(q,p)}{4\, y(q)dx(q)},\label{Rinverse2}
\end{align}
where we used the symmetry of $\omega_{\frac{1}{2},2}$ to apply the recursion in $p_0$. If one exchanges the order of residues, we have
\begin{equation}
\mathscr{Q}\left(\sum_{s\in\mathcal{R}^*}\underset{q=s}{\text{Res}}+\underset{q=\sigma(p_0)}{\text{Res}}\right)\left(\underset{p=q}{{\rm Res}}+\sum_{r\in\mathcal{R}^*}\underset{p=r}{\text{Res}}+\underset{p=\sigma(p_0)}{\text{Res}}+\underset{p=\sigma(q)}{\text{Res}}\right)\Phi(p)\,\frac{\Delta\omega_{0,2}(q,p_0)\cdot\Delta\omega_{0,2}(q,p)}{4\, y(q)dx(q)}\label{Rinverse2b}.
\end{equation}
Then since only $\omega_{0,2}(p,q)$ and $\omega_{0,2}(p,\sigma(q))$ appears in the integrand of \eqref{Rinverse2b}, evaluating the residues gives zero, and thus
\begin{equation}
    \left(\sum_{r\in\mathcal{R}^*}\underset{p=r}{\text{Res}}+\underset{p=\sigma(p_0)}{\text{Res}}\right)\Phi(p)\cdot  \omega_{\frac12,2}(p,p_0)=0.
\end{equation}

We now proceed by induction on $2g+n\geq2$. First, by applying the refined topological recursion, it follows
\begin{align}
&\left(\sum_{r\in\mathcal{R}^*}\underset{p=r}{\text{Res}}+\!\!\!\!\sum_{s\in\sigma(J)}\underset{p=r}{\text{Res}}\right)\Phi(p)\cdot  \omega_{g,n+1}(p,J)\nonumber\\
&=-2\left(\sum_{s\in\mathcal{R}^*}\underset{q=s}{\text{Res}}+\!\!\!\!\sum_{s\in\sigma(J)}\underset{q=s}{\text{Res}}\right)\left( \underset{p=q}{\text{Res}}+\underset{p=\sigma(q)}{\text{Res}}+\sum_{r\in\mathcal{R}^*}\underset{p=r}{\text{Res}}+\!\!\!\!\sum_{r\in\sigma(J)}\underset{p=r}{\text{Res}}\right)\nonumber\\&\hspace{80mm}\times
{\rm K}(p_1,q)\cdot\Phi(p)\cdot\text{Rec}_{g,n+1}^\mathscr{Q}(q,p,\widehat{J}_1),\label{Rinverse3}
 \end{align}
where we have already exchanged the order of contour integrals as we did in the proof of Proposition \ref{thm:sym}. 

We evaluate this by breaking the integrand up into two pieces. We first consider all terms in the integrand that do not contain a factor of $\omega_{0,2}(q,p)$. To evaluate the inner residues, we note such terms have no residues at $p=q$ thanks to Lemma~\ref{lem:poles}, so the inductive assumption gives:
\begin{align}
&\left( \underset{p=q}{\text{Res}}+\underset{p=\sigma(q)}{\text{Res}}+\sum_{r\in\mathcal{R}^*}\underset{p=r}{\text{Res}}+\!\!\!\!\sum_{r\in\sigma(J)}\underset{p=r}{\text{Res}}\right){\Phi(p)}\cdot \text{Rec}^{\mathscr{Q},**}_{g,n+1}(q,p,\widehat{J}_1)=-(2g+n-3)\cdot \text{Rec}^{\mathscr{Q}}_{g,n}(q,\widehat{J}_1),
\end{align}
where we wrote $\text{Rec}^{\mathscr{Q},**}_{g,n+1}$ to denote only the terms of $\text{Rec}^{\mathscr{Q}}_{g,n+1}$ independent of $\omega_{0,2}$.\footnote{Note that the computation is in part parallel to \cite[Eq. (A-31)]{EO2}, which the reader may refer to for details.} Multiplying by $-2\,{\rm K}(p_1,q)$ and applying the outer residues to the right hand side, we immediately obtain $-(2g+n-3)\,\omega_{g,n}(J)$ from the recursion formula \eqref{eq:RTR}.

It remains to evaluate the terms involving $\omega_{0,2}(q,p)$:
\begin{align} \label{Rinverse5}
\left( \underset{p=q}{\text{Res}}+\underset{p=\sigma(q)}{\text{Res}}+\sum_{r\in\mathcal{R}^*}\underset{p=r}{\text{Res}}+\!\!\!\!\sum_{r\in\sigma(J)}\underset{p=r}{\text{Res}}\right)\Phi(p)\cdot \Delta\omega_{0,2}(q,p)\cdot\omega_{g,n}(q,\widehat{J}_1)&\nonumber\\=\qquad 2 y(q) dx(q)\cdot \omega_{g,n}(q,\widehat{J}_1)&.
\end{align}
where the latter two residues vanish trivially, and the first two follow from 
\begin{align}
    \underset{p=q}{\rm Res}\,\Phi(p)\omega_{0,2}(p,q)=\omega_{0,1}(q), \qquad \underset{p=\sigma(q)}{\rm Res}\Phi(p)\omega_{0,2}(p,\sigma(q))=\omega_{0,1}(\sigma(q)).
\end{align}
Noting that $2 y(q) dx(q)$ is the denominator of the kernel, multiplying \eqref{Rinverse5} by $-2{\rm K }(p_1,q)$ and taking the residues in $q$, we must evaluate
\begin{equation}\label{equation255}
   -\left(\sum_{s\in\mathcal{R}^*}\underset{q=s}{\text{Res}}+\!\!\!\!\sum_{r\in\sigma(J)}\underset{q=s}{\text{Res}}\right) \int_{\sigma(q)}^q\omega_{0,2}(p_1,\cdot)\cdot \omega_{g,n}(q,\widehat{J}_1).
\end{equation}
The integrand can have poles in $q$ only at $\mathcal{R}^*$, $\sigma({J})$ or $p_1$ thanks to RTR\ref{RTR2}. Then since the sum of residues of a meromorphic form vanishes, \eqref{equation255} becomes
\begin{equation}
\underset{q=p_1}{\text{Res}}\int_{\sigma(q)}^q\omega_{0,2}(p_1,\cdot)\cdot \omega_{g,n}(q,\widehat{J}_1) = -\omega_{g,n}(J).
\end{equation}
Putting the two together, we obtain the result.
\end{proof}


Having established RTR\ref{RTR4}, we may now complete the proof of RTR\ref{RTR3}:
\begin{proposition}[RTR\ref{RTR3}]\label{prop:residue2}
For $2g+n\geq2$, and any $o\in\Sigma$ $\omega_{g,n+1}$ is residue-free in the first (thus, any) variable:
\begin{equation}
\underset{p=o}{{\rm Res}}\,\omega_{g,n+1}(p,J)=0.\label{residue2}
\end{equation}
\end{proposition}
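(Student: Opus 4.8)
The plan is to reduce the statement to the one-forms $\omega_{g,1}$ and deduce their residue-freeness from RTR\ref{RTR4}. By Proposition \ref{lem:residue1} the claim already holds for every $\omega_{g,n+1}$ with $n\geq 1$ (i.e. at least two variables), so the only remaining case is $\omega_{g,1}$ with $2g\geq 2$. By Lemma \ref{lem:poles} the only poles of $\omega_{g,1}$ lie in $\mathcal{R}^*$; hence at any $o\notin\mathcal{R}^*$ the residue vanishes trivially, and it suffices to prove $\underset{p_1=r_0}{\mathrm{Res}}\,\omega_{g,1}(p_1)=0$ for each effective ramification point $r_0\in\mathcal{R}^*$.

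First I would apply Proposition \ref{thm:Rinverse} (RTR\ref{RTR4}) with $n=1$. Since $2g-1\neq 0$ whenever $2g\geq 2$, this gives
\begin{equation}
(2g-1)\,\omega_{g,1}(p_1)=\left(\sum_{r\in\mathcal{R}^*}\underset{p=r}{\mathrm{Res}}+\underset{p=\sigma(p_1)}{\mathrm{Res}}\right)\Phi(p)\cdot\omega_{g,2}(p,p_1),
\end{equation}
and the goal becomes to show that $\underset{p_1=r_0}{\mathrm{Res}}$ annihilates the right-hand side. The terms with $r\in\mathcal{R}^*$, $r\neq r_0$, are the easy ones: localizing $p$ near such an $r$ keeps the anti-diagonal point $\sigma(p_1)$ away from $r$ as $p_1\to r_0$, so the $p$- and $p_1$-residue loci never collide and the two residues may be exchanged freely; the inner residue $\underset{p_1=r_0}{\mathrm{Res}}\,\omega_{g,2}(p,p_1)$ then vanishes by Proposition \ref{lem:residue1}.

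The delicate contribution is the one from $r=r_0$ together with the anti-diagonal residue at $p=\sigma(p_1)$, which collide as $p_1\to r_0$ because $\sigma(r_0)=r_0$. Here I would repackage the two residues into a single fixed contour integral. In a small disk $U_{r_0}$ around $r_0$ the only poles of $\Phi(p)\,\omega_{g,2}(p,p_1)$ in $p$ are at $p=r_0$ and $p=\sigma(p_1)$ --- note $\Phi$ is holomorphic on $U_{r_0}$ since $\Delta y\!\cdot\! dx=2y\,dx$ is regular (with at most a double zero) at the effective point $r_0$ by Assumption \ref{ass:tr} --- so that
\begin{equation}
\left(\underset{p=r_0}{\mathrm{Res}}+\underset{p=\sigma(p_1)}{\mathrm{Res}}\right)\Phi(p)\cdot\omega_{g,2}(p,p_1)=\frac{1}{2\pi i}\oint_{\partial U_{r_0}}\Phi(p)\,\omega_{g,2}(p,p_1)=:G(p_1).
\end{equation}
Computing $\underset{p_1=r_0}{\mathrm{Res}}\,G$ with the $p_1$-contour a circle strictly inside $\partial U_{r_0}$, the integrand is jointly holomorphic on the product of the two contours (the anti-diagonal $p=\sigma(p_1)$ sits on the inner circle while $p$ ranges over the outer one, so it is never met). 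Fubini then permits exchanging the two contour integrals, and the inner $p_1$-integral is $\underset{p_1=r_0}{\mathrm{Res}}\,\omega_{g,2}(p,p_1)=0$ by Proposition \ref{lem:residue1}; hence $\underset{p_1=r_0}{\mathrm{Res}}\,G(p_1)=0$. Combined with the vanishing of the $r\neq r_0$ terms, this yields $\underset{p_1=r_0}{\mathrm{Res}}\,\omega_{g,1}(p_1)=0$ and completes the proof.

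I expect the main obstacle to be precisely the collision of the anti-diagonal pole $p=\sigma(p_1)$ with the ramification point $r_0$ as $p_1\to r_0$, which is exactly what blocks a naive exchange of residues (as in the proof of Proposition \ref{thm:sym}). The device that resolves it is repackaging the two offending residues as the single fixed contour integral $G(p_1)$ and invoking Fubini together with the already-established residue-freeness of $\omega_{g,2}$ from Proposition \ref{lem:residue1}; once this is in place, the remaining bookkeeping is routine.
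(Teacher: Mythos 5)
Your proof is correct, and it shares the paper's overall skeleton: reduce to the one-variable forms $\omega_{g,1}$ via Proposition \ref{lem:residue1}, then feed RTR\ref{RTR4} (Proposition \ref{thm:Rinverse}) into the residue-freeness of $\omega_{g,2}$. Where you genuinely differ is in how the interchange of $\underset{p_1=r_0}{\mathrm{Res}}$ with the residues on the right-hand side of \eqref{Rinverse11} is justified. The paper applies the recursion \eqref{eq:RTR} once more to $\omega_{g,2}(p,p_0)$, obtaining a double-residue expression with integrand $\Phi(p)\,{\rm K}(p,q)\,{\rm Rec}^{\mathscr{Q}}_{g,2}(q,p_0)$, and then argues that $\underset{p_0=o}{\mathrm{Res}}$ commutes with both residue operations because every $\omega_{\tilde g,\tilde n}$ occurring in ${\rm Rec}^{\mathscr{Q}}_{g,2}$ has $\tilde n>1$ and is hence residue-free in $p_0$ by Proposition \ref{lem:residue1}; the collision of the moving locus $\sigma(p_0)$ with the fixed residue loci is handled only implicitly (``which one can check''). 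You avoid the second application of the recursion entirely: Lemma \ref{lem:poles} reduces the claim to $o=r_0\in\mathcal{R}^*$, and the only genuine obstruction --- the merging of $\mathrm{Res}_{p=r_0}$ and $\mathrm{Res}_{p=\sigma(p_1)}$ as $p_1\to r_0$, since $\sigma(r_0)=r_0$ --- is resolved by fusing the two into a single fixed contour integral over $\partial U_{r_0}$ (legitimate because $\Phi$ is holomorphic there, $r_0$ being effective, and because of the pole structure RTR\ref{RTR2}) and then exchanging nested contour integrals by Fubini, after which Proposition \ref{lem:residue1} kills the inner integral. Your route is more self-contained and makes explicit precisely the collision analysis the paper compresses into an assertion, at the price of invoking the pole-structure statements to first localize at ramification points; the paper's route, in exchange for the extra recursion expansion, treats all $o\in\Sigma$ uniformly without that reduction. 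Both arguments are sound and rest on the same two inputs.
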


\begin{proof}
It remains only to show the case of $\omega_{g,1}$, $g\geq1$. Thanks to RTR\ref{RTR4}, \eqref{Rinverse} shows that
\begin{align}
&(2g-1)\,\omega_{g,1}(p_0)\\
&=\left(\sum_{r\in\mathcal{R}^*}\underset{p=r}{\text{Res}}+\underset{p=\sigma(p_0)}{\text{Res}}\right){\Phi(p)}\, \omega_{g,2}(p,p_0)\nonumber\\
&=-2\left(\sum_{r\in\mathcal{R}}\underset{p=r}{\text{Res}}+\underset{p=\sigma(p_0)}{\text{Res}}\right)\left(\sum_{s\in\mathcal{R}}\underset{q=s}{\text{Res}}+\underset{q=\sigma(p)}{\text{Res}}+\underset{q=\sigma(p_0)}{\text{Res}}\right){\Phi(p)}\cdot{{\rm K}(p,q)}\cdot\text{Rec}_{g,2}^\mathscr{Q}(q,p_0), 
\end{align}
We now take the residue in $p_0$ and observe that $\underset{{p_0=o}}{\text{Res}}$ commutes with both of the other residue terms. This follows since the precise form of ${\rm Rec}^{\mathscr{Q}}_{g,2}(q,p_0)$ is a sum of $p_0$-independent coefficients times $\omega_{\tilde{g},\tilde{n}}$'s with $p_0$ in one argument, which one can check all have $\tilde{n}>1$. Hence the proposition follows from the residue-freeness already shown.
\end{proof}

\section{Quantum curves}\label{sec:QC}
Having established the necessary definitions and properties of the refined recursion, we now turn to the task of constructing the corresponding quantum curves. 

Throughout this section we fix the standard coordinate $z$ on $\mathbb{P}^1$, and write all expressions in this coordinate for convenience (occasionally blurring the distinction between point and coordinate). Furthermore, since there is a canonical choice of bidifferential $B$ on $\mathbb{P}^1$, we will always assume
\begin{equation}\label{eq:canonicalB}
    B(z_1,z_2)=\frac{dz_1dz_2}{(z_1-z_2)^2}.
\end{equation}
throughout this section.

The results of this section are a generalization of the results of \cite{IKT1,IKT2}, and we follow their presentation closely.

\subsection{Definition of quantum curve}

Let $\mathcal{S}^{\bm \mu}=(\Sigma,x,y,B,D({\bm \mu}))$ be a genus zero degree two refined spectral curve satisfying Assumption \ref{ass:tr}. From the refined topological recursion one may construct a so-called wavefunction. In fact, it turns out we may write an entire family of wavefunctions --- for this purpose, we take a (complex) divisor
\begin{equation}
D(z; {\bm \nu}) := [z] - \sum_{p \in {\mathcal P}} \nu_p [p],
\end{equation}
on ${\Sigma}$ which depends on $z \in {\Sigma}$ and a tuple ${\bm \nu} = (\nu_p)_{p \in \mathcal{P}}$ of complex parameters, which we call {\em quantization parameters}, satisfying 
\begin{equation} \label{eq:relation-nu}
\sum_{p \in {\mathcal P}} \nu_p = 1. 
\end{equation}
\noindent Integration of a residue-free meromorphic one-form $\omega$ with respect to the divisor $D(z;{\bm \nu})$ will by definition mean
\begin{equation}
\int_{D(z;{\bm \nu})}\omega:= \sum_{p\in \mathcal{P}}\nu_p \, \int_{p}^z\omega,
\end{equation}
which is well-defined.

In order to define the wavefunction, we integrate $\omega_{g,n}$ along a divisor repeatedly. The following lemma assures good behaviour of such multi-integrals:

\begin{lemma}\label{lem:residue3}
Given a genus zero degree two refined spectral curve, let $p_i \in\Sigma$ for $i=0,\ldots n$ and suppose that $p_i\not\in\mathcal{R}^*\cup\mathcal{P}$, and $p_i\neq \sigma(p_j)$ for any $i,j$. Suppose furthermore that lower endpoints $q_i\in {\mathcal{P}}$ are chosen. Then for all $2g+n\geq2$, the integral
\begin{equation}
   \int^{p_0}_{q_0}\cdots\int^{p_{n}}_{q_{n}}\omega_{g,n+1}(\zeta_0,\zeta_1,\ldots,\zeta_n)\label{residue3}
\end{equation}
where the $(i+1)th$ integral from the left is in the $\zeta_i$ variable, exists and defines a meromorphic function on $\Sigma$ with respect to each $p_i$, regular at $p_i=p_j$ for any $i,j$, and is independent of the path of integration. 
\end{lemma}
\noindent We defer the proof to Appendix \ref{sec:proofs}.

\begin{remark}\label{rem:continuous}
    Note that this integral is in fact not guaranteed to be continuous in $p_i$ at the excluded points, even if it exists. Thus, we have said that \eqref{residue3} \emph{defines} a meromorphic function, with the understanding that it is given by a rational expression whose value is given by the integral \eqref{residue3} on a dense open subset of $\Sigma^{n+1}$.
\end{remark}

We may now define the \emph{(refined) topological recursion wavefunction} by (see also \cite{MS,CHMS} for a special choice of ${\bm \nu}$ and ${\bm \mu}$):

\begin{align}
&    \varphi^{\rm \mathsmaller{TR}}(z) = \varphi^{\rm \mathsmaller{TR}}(z;{\bm \mu}, {\bm \nu},\beta):= \nonumber \\
&    \exp \left( \sum_{g\in \frac{1}{2}\mathbb{Z}_{\geq 0},n>0} \dfrac{\hbar^{2g-2+n}}{\beta^{n/2}} \dfrac{1}{n!} \underset{\zeta_1 \in D(z;{\bm \nu})}{\int}\!\!\!\ldots\!\!\! \underset{\zeta_n \in D(z;{\bm \nu})}{\int}\omega_{g,n}(\zeta_1,\ldots,\zeta_n)-\delta_{g,0}\delta_{n,2}\frac{dx(\zeta_1)dx(\zeta_2)}{(x(\zeta_1)-x(\zeta_2))^2}\right),
\end{align}
which should be understood as (the exponential of) a formal series in $\hbar$ with coefficients depending on $z$ and parameters $\beta$, ${\bm \mu}$, and ${\bm \nu}$. Note that when $\beta = 1$ this indeed reproduces the definition of the topological recursion wavefunction in the unrefined case, and that the integration along the divisor with upper endpoints all specialized to $z$ makes sense thanks to Lemma \ref{lem:residue3}. Finally, although it does not play a role in the present paper, we note that due to Remark \ref{rem:continuous}, for some values of $z$ the integral signs must strictly speaking be interpreted as a limit of the meromorphic function in Lemma \ref{lem:residue3}.
\begin{remark} Strictly speaking $\varphi^{\rm \mathsmaller{TR}}$ depends on the path of integration due to possible residues of $\omega_{0,1}$ and $\omega_{\frac{1}{2},1}$, and behaviour of $\omega_{0,2}$ (there is no issue for other terms thanks to Lemma \ref{lem:residue3}). However, this only changes the wavefunction by an overall constant, which has no effect on the quantum curve. Furthermore, since we are integrating with an endpoint at a pole of $ydx$, the first three terms (the coefficients of $\hbar^{-1}$ and $\hbar^0$) in the exponential may diverge. We could remedy this with a regularization method along the same lines as \cite[Remark 4.6]{IKT1}, but since only $z$-derivatives will matter in what follows, we omit the details. 
\end{remark}

\begin{remark}
Note that although refined topological recursion depends on the deformation parameter $\mathscr{Q}$, the wavefunction breaks the $\beta \leftrightarrow 1/\beta$ symmetry. We could have equivalently replaced $\beta \rightarrow 1/\beta$ in the definition, and all arguments in the remainder of the paper would still be valid, but with the appropriate modification.
\end{remark}

For any choice of inverse $z(x)$, we can define a corresponding wavefunction on the base $\mathbb{P}^1$:
\begin{equation}
    \psi^{\rm {TR}}(x):=\varphi^{\rm TR}(z(x)).
\end{equation}
A \emph{quantum curve} is by definition a differential operator\footnote{Geometrically, it is a kind of \emph{oper} \cite{beilinson2005opers,DM2}, acting on $-\frac{1}{2}$-forms, but we will not emphasize this here. See also \cite{iwaki2014exact} for details in the context of WKB analysis.} (for us, of order $2$) such that

\begin{equation}\label{eq:qcurve}
 \left(\hbar^2\dfrac{d^2}{dx^2} + q(x,\hbar) \hbar \dfrac{d}{dx}+r(x,\hbar)\right)\psi^{\rm \mathsmaller{TR}}(x) =0,
\end{equation}
where $q,r$ are formal series in $\hbar$ of meromorphic functions in $x$, with the additional property that its \emph{semiclassical limit} is precisely the equation for the spectral curve:

\begin{equation}
     y^2 + q(x,0)y + r(x,0) = y^2-Q(x).
\end{equation}

Thanks to the form of the wavefunction (that is, the exponential of an $\hbar$-series beginning at order $-1$), it turns out this condition is precisely the condition for $\psi^{\rm \mathsmaller{TR}}$ to be a WKB solution to equation \eqref{eq:qcurve}, which we explain next.

\subsection{WKB analysis}
Let $q(x,\hbar), r(x,\hbar)$ be formal $\hbar$-series of rational functions in $x$. For any Schr\"odinger-like equation

\begin{equation}\label{schrod}
    \left(\hbar^2\dfrac{d^2}{dx^2} + q(x,\hbar) \hbar \dfrac{d}{dx}+r(x,\hbar)\right)\psi =0,
\end{equation}

\noindent we may seek a formal solution, a \emph{WKB solution}, of the form

\begin{equation}\label{wkb-sol}
    \psi^{\mathsmaller{\rm WKB}}(x,\hbar) := \exp{\int^x S(x,\hbar)}= \exp \left( \int^x\dfrac{S_{-1}(x)}{\hbar}+ \int^x S_0(x) + \ldots \right).
\end{equation}

In order for the WKB solution \eqref{wkb-sol} to satisfy (formally) the equation \eqref{schrod}, $S(x,\hbar)$ must then satisfy the \emph{Riccati equation}

\begin{equation}
    \hbar^2 \left(\dfrac{d}{dx}S(x,\hbar)+S(x,\hbar)^2\right)+\hbar \,  q(x,\hbar) S(x,\hbar) + r(x,\hbar) = 0,
\end{equation}
which can be written more explicitly as the recursive equations
\allowdisplaybreaks[0]
\begin{align}
    &(S_{-1})^2+q_0(x)S_{-1}+r_0(x)=0, \label{eq:WKBcurve}\\ 
    &(2S_{-1}+q_0(x))S_{m+1}+\sum_{j=0}^{m}S_{m-j}S_j+q_1(x)S_m+\dfrac{d S_m}{dx} \nonumber \\ & \qquad \qquad \qquad \qquad \;\;\, + \sum_{j=-1}^m q_{m+1-j}(x)S_j + r_{m+2}(x)=0, \qquad m\geq-1.
\end{align}
\allowdisplaybreaks[1]

\noindent Note that the structure of these equations implies that once the solution $S_{-1}$ to the first equation is specified, all the remaining $S_i$ are determined uniquely.

Now fix a spectral curve $(\Sigma,x(z),y(z))$ and a Schr\"odinger-like equation of the form \eqref{schrod}, such that $y=S_{-1}$ satisfies \eqref{eq:WKBcurve}. Then we can pull the WKB solution $\psi^{\rm {WKB}}$ back to the cover $\Sigma$. We denote by $T^{\rm WKB}(z)$ the log-derivative (in $z$) of $\psi^{\rm WKB}(x(z),\hbar)$:

\begin{equation}
T^{\mathsmaller{\rm WKB}}(z):= \dfrac{d}{dz}\log \psi^{\rm {WKB}}(x(z),\hbar)=    S(x(z),\hbar)x'(z).
\end{equation}
Then the Riccati equation becomes,
\allowdisplaybreaks[0]
\begin{align}\label{eq:ricT}
    &(T_{-1}^{\rm \mathsmaller{WKB}})^{2}+U_0(z)T^{{\rm \mathsmaller{WKB}}}_{-1}+V_0(z)=0,\\
    &\left(2T^{\rm \mathsmaller{WKB}}_{-1}+U_0(z)\right)T^{\rm \mathsmaller{WKB}}_{m+1}+\left(\dfrac{d}{dz}-\dfrac{x''(z)}{x'(z)}\right)T^{\rm \mathsmaller{WKB}}_m+ \sum_{j=-1}^m U_{m+1-j}(z)T_{j}^{\rm \mathsmaller{WKB}} \notag \\
    &\hspace{6cm}+\sum_{j=0}^m{T^{\rm \mathsmaller{WKB}}_{m-j}T^{\rm \mathsmaller{WKB}}_j}+V_{m+2}(z)=0, \quad m\geq-1,
\end{align}
\allowdisplaybreaks[1]

\noindent where 
\begin{align}U(z,\hbar):=x'(z)q(x(z),\hbar) \quad V(z,\hbar):=x'(z)^2 r(x(z),\hbar)
\end{align}
are formal $\hbar$-series of rational functions in $z$. Again, once $T^{\rm \mathsmaller{WKB}}_{-1}$ is specified, all other coefficients $T^{\rm \mathsmaller{WKB}}_{m}$, $m\geq 0$, are determined uniquely.

Thus, the statement that a given Schr\"odinger equation \eqref{schrod} is a quantum curve for the spectral curve $(\Sigma,x,y)$ is the statement that the topological recursion wavefunction $\psi^{\rm TR}$ is a WKB solution for a specific choice of $U,V$. 


We will show the following result, which is our main theorem:
\begin{theorem}\label{thm:main}
Let $\mathcal{S}^{\bm \mu}=(\Sigma,x,y,B,D({\bm \mu}))$ be a genus zero degree two refined spectral curve satisfying \emph{Assumption \ref{ass:tr}}. Given a divisor
\begin{equation}
    D(z;{\bm \nu })=[z]-\sum_{p\in \mathcal{P}}\nu_{p}[p], \qquad \sum_{p \in \mathcal{P}}\nu_p=1,
\end{equation}
and any choice of inverse $z(x)$, the wavefunction 
\begin{equation}
    \psi^{\rm \mathsmaller{TR}}(x) = \varphi^{\rm \mathsmaller{TR}}(z(x)),
\end{equation}
is a formal solution of the equation
\begin{equation}
\left(\hbar^{2}\dfrac{d^2}{dx^2}+q(x,\hbar)\hbar \dfrac{d}{dx}+r(x,\hbar)\right)\psi^{\rm \mathsmaller{TR}}(x)=0,
\end{equation}  
where $q(x,\hbar)$, $r(x,\hbar)$ are (finite) formal $\hbar$-series of rational functions in $x$
\begin{equation}
    q(x,\hbar)=q_0(x)+\hbar q_1(x), \quad r(x,\hbar) = r_0(x) + \hbar r_1(x) + \hbar^2 r_2(x).
\end{equation}
which can be explicitly computed. Furthermore, if we denote the unrefined limit $\beta=1$ by $r(x,\hbar;{\bm \nu})=r(x,\hbar;{\bm \mu},{\bm \nu},\beta)|_{\beta=1}$, $q(x,\hbar;{\bm \nu})=q(x,\hbar;{\bm \mu},{\bm \nu},\beta)|_{\beta=1}$, we have
\begin{align}
    &r(x,\hbar;{\bm \mu},{\bm \nu},\beta)=\frac{1}{\beta}r(x,{\hbar \beta^{-\frac12}};\widehat{{\bm \nu}}),\\
    &q(x,\hbar;{\bm \mu},{\bm \nu},\beta)=\frac{1}{\beta^{\frac12}}q(x,{\hbar \beta^{-\frac12}};{{\bm \nu}}),
\end{align}
where $\widehat{\bm \nu}$ denotes the shifted parameters
\begin{equation}
    \hat{\nu}_p={\nu_p}-{\rm ord}_p(y)\left(\tfrac{1+\mu_p}{2}\right)(\beta-1).
\end{equation}
\end{theorem}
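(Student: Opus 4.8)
The plan is to follow the strategy of Iwaki-Koike-Takei \cite{IKT1,IKT2}: reduce the quantum-curve statement to a Riccati equation for the logarithmic derivative of the wavefunction, produce that Riccati equation from the refined loop equations, and finally extract the $\beta,{\bm\mu}$-dependence from the explicit shape of $\omega_{\frac12,1}$.

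First I would introduce the logarithmic derivative on the cover, $T^{\rm TR}(z):=\tfrac{d}{dz}\log\varphi^{\rm TR}(z)$, and compute it by differentiating the exponent term by term. Since the divisor $D(z;{\bm\nu})$ depends on $z$ only through its base point $[z]$, and since each $\omega_{g,n}$ is symmetric (RTR\ref{RTR1}), the $z$-derivative of the $n$-fold integral produces $n$ copies of the same $(n-1)$-fold integral with one argument frozen at $z$, turning $1/n!$ into $1/(n-1)!$. Thus $T^{\rm TR}(z)\,dz$ is a generating one-form
\[
\Omega(z):=\sum_{g,n}\frac{\hbar^{2g-2+n}}{\beta^{n/2}}\frac{1}{(n-1)!}\int_{D(z;{\bm\nu})}\!\!\!\cdots\!\!\!\int_{D(z;{\bm\nu})}\omega_{g,n}(z,\zeta_2,\dots,\zeta_n),
\]
whose leading terms are governed by $\omega_{0,1}=y\,dx$ (giving $S_{-1}=y$), by $\omega_{\frac12,1}$ (the first refinement correction), and by $\omega_{0,2}$ (whose subtracted-diagonal regularization must be handled as in the Remark following the definition of $\varphi^{\rm TR}$). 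Setting $S(x,\hbar)=T^{\rm TR}(z(x))/x'(z(x))$ and using $\psi''/\psi=S'+S^2$, the quantum-curve equation \eqref{eq:qcurve} becomes the Riccati equation $\hbar^2(S'+S^2)+\hbar\,q\,S+r=0$ of \eqref{eq:WKBcurve}; hence it suffices to exhibit finite rational $q,r$ solving it.

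Next I would generate this Riccati equation from Lemma~\ref{lem:GLP}. The plan is to assemble the quadratic differentials $P_{g,n+1}(z,J)$ into $\hbar$-weighted generating series by integrating the $J$-variables against $D(z;{\bm\nu})$. Because each $P_{g,n+1}$ is $\sigma$-invariant and holomorphic at $\mathcal R^*$, the two local solutions of the Riccati equation are exactly the restrictions of $\Omega$ to the two sheets $z$ and $\sigma(z)$; their sum and product are $\sigma$-invariant and therefore descend to rational functions of $x$, furnishing $q$ and $r$ up to the explicit derivative corrections dictated by the $\mathscr Q$-terms in \eqref{RecQ}. The pole bound RTR\ref{RTR2} guarantees that the only singularities of these symmetric functions lie over $\mathcal P$ and the ramification locus, so they are genuinely rational in $x$.

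The main obstacle is the \emph{truncation}: that $q$ is degree one and $r$ degree two in $\hbar$, i.e. the $\hbar$-corrections terminate. The novelty over the unrefined case is that RTR\ref{RTR2} now permits poles at $\sigma(J_0)$, and $\omega_{\frac12,1}$ carries poles along $\widetilde{\mathcal P}$ (Lemma~\ref{lem:pole1/2,1}), so one must verify these extra singularities cancel in the symmetric combinations and generate no higher powers of $\hbar$. I would bound, order by order in $\hbar$, the principal parts of the symmetric functions using RTR\ref{RTR2}, RTR\ref{RTR3}, and the explicit form of $\omega_{\frac12,1}$; the degree-two structure $y^2=Q(x)$ together with the boundedness of $\mathrm{ord}_p(y)$ then forces only finitely many orders to survive. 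This is where most of the computational labour — and the essential use of the refined loop equation — resides. Finally, for the refinement relation I would track the $\beta,{\bm\mu}$-dependence: writing $\hbar'=\hbar\beta^{-1/2}$, the weights satisfy $\hbar^{2g-2+n}\beta^{-n/2}=\hbar'^{2g-2+n}\beta^{g-1}$, so the integer-$g$ part of $\Omega$ reproduces the unrefined generating series in $\hbar'$. The half-integer part enters at lowest order through $\omega_{\frac12,1}=\tfrac{\mathscr Q}{2}\bigl(-d\log y+\sum_p\mu_p\,\mathrm{ord}_p(y)\,\eta_p\bigr)$, whose integral against $D(z;{\bm\nu})$ is purely logarithmic; exponentiating produces a product of powers $(z-p)^{c_p}$ that merely shifts the effective exponent of $\psi^{\rm TR}$ at each $p\in\widetilde{\mathcal P}$. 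Matching this shift against the $\hbar$-rescaling, using $\mathscr Q\,\beta^{-1/2}=1-\beta^{-1}$, collapses the combined effect into exactly $\nu_p\mapsto\hat\nu_p=\nu_p-\mathrm{ord}_p(y)\tfrac{1+\mu_p}{2}(\beta-1)$; substituting into the already-established unrefined quantum curve yields the stated transformation of $q$ and $r$. I expect this last bookkeeping to be routine once the truncation is secured.
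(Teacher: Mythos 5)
Your skeleton (log-derivative $T^{\rm TR}$, reduction to a Riccati equation, appeal to the loop equations) points in the right general direction, but the two steps that carry the entire weight of the theorem are not established, and each rests on a claim that is false in the refined setting. First, the \emph{truncation}. You propose to define $q,r$ as symmetric functions of the sheet-restrictions of $\Omega$ and then ``bound, order by order in $\hbar$, the principal parts\ldots so that only finitely many orders survive.'' A bound on principal parts can at best show that the $\hbar$-coefficients of $q,r$ are rational with controlled poles; it cannot show that they \emph{vanish identically} for $m\geq 2$ (resp.\ $m\geq 3$), which is what truncation means. Moreover, your identification of the second local solution with the restriction of $\Omega$ to the other sheet implicitly uses the anti-invariance $\omega_{g,n+1}(\sigma(p_0),J)=-\omega_{g,n+1}(p_0,J)$, which holds in the unrefined case but fails in the refined one: by RTR\ref{RTR2} the refined $\omega_{g,n+1}$ have genuine poles at $\sigma(J_0)$, which anti-invariance would transport to poles at $J_0$, contradicting Lemma \ref{lem:poles}. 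The paper obtains truncation structurally, not by estimates: starting from the identity \eqref{pole2} (which is the loop equation of Lemma \ref{lem:GLP} in its \emph{explicit} form \eqref{Pg,n+1-1}, not merely its qualitative properties of $\sigma$-invariance and holomorphicity at $\mathcal{R}^*$), it derives closed recursions for the integrated objects $\mathpzc{G}_{g,n}$ and $\mathpzc{H}_{g,n}$ (Lemma \ref{lem:hrec}) and then for $T^{\rm TR}_m$ (Proposition \ref{prop:trec}); the recursion \eqref{eq:trec} for $m\geq1$ is \emph{literally} of the Riccati form \eqref{eq:ricT} with $U_j=0$ for $j\geq2$ and $V_j=0$ for $j\geq3$, so $\psi^{\rm TR}$ is a WKB solution of an operator with $q=q_0+\hbar q_1$, $r=r_0+\hbar r_1+\hbar^2 r_2$, and truncation is automatic. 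In other words, the qualitative loop-equation properties you invoke do not suffice; the closed recursion coming from the precise form of ${\rm Rec}^{\mathscr{Q}}_{g,n+1}$ is what does the work.

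Second, the $\beta,{\bm\mu}$-relation. Your bookkeeping rests on the premise that after setting $\hbar'=\hbar\beta^{-1/2}$ ``the integer-$g$ part of $\Omega$ reproduces the unrefined generating series in $\hbar'$,'' with the half-integer sector reducing to the logarithmic contribution of $\omega_{\frac12,1}$. Both statements are false: the refined $\omega_{g,n}$ with integer $g$ depend polynomially on $\mathscr{Q}$ and coincide with the unrefined ones only at $\mathscr{Q}=0$ (see Remark \ref{rem:0n}; e.g.\ $\omega_{1,1}$ carries a $\mathscr{Q}^2$-term), and the higher half-integer differentials $\omega_{\frac12,2},\omega_{\frac32,n},\ldots$ contribute at every order and are not exponent shifts. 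Consequently the refined wavefunction is \emph{not} the unrefined wavefunction with shifted parameters --- note also that $\sum_p\hat\nu_p\neq1$ in general, so $\hat{\bm\nu}$ is not even an admissible quantization parameter --- and the stated relation cannot be obtained by resumming the wavefunction. In the paper it is an observation made \emph{after} the coefficients have been computed: the $m=-1,0$ Riccati equations yield the explicit formulas (\ref{eq:preciseformula1}--\ref{eq:preciseformula3}) for $q_1,r_1,r_2$, which turn out to have exactly the shape of the unrefined formulas of Iwaki-Koike-Takei up to the $\beta$-prefactors and the substitution $\nu_p\to\hat\nu_p$; this comparison of formulas is also what guarantees (via the unrefined result) that $q_1,r_1,r_2$ are well-defined rational functions of $x$. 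To repair your argument you would need to carry out this explicit computation, at which point you would essentially be reproducing the paper's proof.
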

\begin{remark} We see that the quantum curve we have found is related in a remarkably simple way to the unrefined quantum curve of \cite{IKT1,IKT2}. In particular, it is obtained by rescaling each term by an appropriate power of $\beta$, together with a shift in the parameters ${\bm \nu}$. Recalling that we defined $\mu_p=-\mu_{\sigma(p)}$ whenever $p \in \widetilde{\mathcal{P}}_-$, since the $p$ for $\nu_p$ appearing in the formula (see (\ref{eq:preciseformula1}--\ref{eq:preciseformula3}) below) for $r_2(x)$  are never ramification points, and the ones appearing in $r_1(x)$ cancel out, the $\hat{\nu}_p$ appearing are indeed defined. 
\end{remark}
\begin{remark}\label{rem:generalcaseqc}
{Even if $x,y$ satisfy a more general algebraic equation of the form $y^2+2b(x) y -Q(x)+b(x)^2=0$, the refined recursion can still be applied by replacing $y$ with $\frac12\Delta y$ as in Remark \ref{rem:ass}, and all $\omega_{g,n}$ are insensitive to $b(x)$ except for $\omega_{0,1}:=ydx$. In this case, the $b(x)$ contributions appear only in the leading order in $\varphi^{\text{TR}}$, and this effect can simply be recovered by the gauge transformation of the wave function $\psi^{\text{TR}}$ by $e^{b(x)/\hbar}$. As a result, the theorem continues to hold verbatim in this case, except that ${\rm ord}_p(y)$ should be replaced with ${\rm ord}_p (\Delta y)$.}
\end{remark}

\subsection{Proof of the recursion relations}
Our approach to determining the quantum curve follows that of \cite{IKT1,IKT2}. In particular, we note that both $\psi^{\rm \mathsmaller{WKB}}$ and $\psi^{\rm \mathsmaller{TR}}$ are (exponentials of) formal $\hbar$-series of the same form, and thus it is sufficient to find $q(z)=q(x(z))$ and $r(z)=r(x(z))$ which factor through $x$, such that 
\begin{equation}
T^{\rm \mathsmaller{TR}}(z):=\sum_{m\geq-1}\hbar^m T^{\rm \mathsmaller{TR}}_m(z):=\frac{d}{dz}\log\psi^{\rm \mathsmaller{TR}}
\end{equation} satisfies the Riccati equation \eqref{eq:ricT} in the $z$-variable. The rest of this section is devoted to determining such functions.

\subsubsection*{Notation}
We will use a similar notation to \cite{IKT1,IKT2}. We let $\mathpzc{G}_{g,n}$ denote the $1$-differential defined as the $n$ times integral of $\omega_{g,n+1}$:
\begin{equation}
    \mathpzc{G}_{g,n+1}(z_0,z_1,\ldots,z_n):= \begin{cases}\omega_{g,1}(z_0),\hfill n=0\vspace{8pt} \\
    
    {\displaystyle\int}_{\!\!\!D(z_1;{\bm \nu})}\!\!\!\ldots {\displaystyle\int}_{\!\!\!D(z_{n};{\bm \nu})} \omega_{g,n+1}(z_0,\zeta_1,\ldots \zeta_{n}),\qquad n>0
    \end{cases}
\end{equation}
where the $i$th integration is taken in the $\zeta_i$ variable. We also define $\mathpzc{H}_{g,n}$ by specializing all variables to $z$ in $\mathpzc{G}_{g,n}$:

\begin{equation}
    \mathpzc{H}_{g,n}(z):=\dfrac{1}{(n-1)!}\mathpzc{G}_{g,n}(z_1,\ldots,z_{n})\bigg|_{z_1=\,\ldots\, =z_{n}=z}, \quad (g,n)\neq(0,2)
\end{equation}

\noindent Throughout, we take $\mathpzc{H}_{g,n}=0$ by convention if $g<0$ or $n<1$. We can then write

\begin{equation}\label{eq:TmTRH}
    T_m^{\rm \mathsmaller{TR}}(z)dz = \sum_{\substack{2g-2+n=m \\ g\geq0,n\geq1}} \beta^{-\frac{n}{2}}\mathpzc{H}_{g,n}(z), \qquad m\neq0
\end{equation}
i.e. $T^{\rm \mathsmaller{TR}}_m dz$ is the sum of all $\mathpzc{H}_{g,n}$ at level $m=2g-2+n$.  For the case $m=0$, we have
\begin{equation}
     T_0^{\rm \mathsmaller{TR}}dz= -\dfrac{1}{\beta }\mathpzc{G}_{0,2}(\sigma({z}),z)+\dfrac{1}{\sqrt{\beta}}\mathpzc{G}_{\frac{1}{2},1}(z).
\end{equation}




We now proceed to obtain recursion relations for $\mathpzc{G},\mathpzc{H}$, and $T^{\rm \mathsmaller{TR}}$. For $\mathpzc{G}_{g,n+1}$, we have:

\begin{lemma}[Recursion for $\mathpzc{G}_{g,n}$]
For any $(g,n)$ with $2g-2+n > 1$, we have
\begin{align}\label{eq:grec}
\notag \mathpzc{G}_{g,n+1}(z_0,J)=&-\frac{1}{{2 y}(z_0)dx(z_0)} \int_{D(z_1;{\bm \nu})}\!\!\!\ldots \int_{D(z_n;{\bm \nu})}\omega_{g-1,n+2}(z_0,z_0,\zeta_J) \\ \notag
&- \sum_{j=1}^n \left(\frac{\mathpzc{G}_{0,2}(z_0,z_j)\!-\!\mathpzc{G}_{0,2}(\sigma(z_0),z_j)}{{2 y}(z_0)dx(z_0)}\mathpzc{G}_{g,n}(z_0,\widehat{J}_{j}) - \dfrac{\mathpzc{G}_{0,2}(z_0,z_j)\!-\!\mathpzc{G}_{0,2}(z_0,\sigma(z_j))}{{2 y}(z_j)dx(z_j)}\mathpzc{G}_{g,n}(z_j,\widehat{J}_{j}) \right)\\
\notag &-\frac{1}{{2 y}(z_0)dx(z_0)}\sum^{**}_{\substack{g_1+g_2=g \\ J_1 \sqcup J_2=J}} \mathpzc{G}_{g_1,n_1+1}(z_0,J_1)\cdot \mathpzc{G}_{g_2,n_2+1}(z_0,J_2)\\
& - \frac{\mathscr{Q}}{{2 y}(z_0)dx(z_0)}dx(z_0) \cdot d_{z_0}\left(\dfrac{\mathpzc{G}_{g-\frac{1}{2},n+1}(z_0,J)}{dx(z_0)}\right).
\end{align}
\end{lemma}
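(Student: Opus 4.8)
The plan is to derive the recursion by substituting the refined recursion \eqref{eq:RTR} for $\omega_{g,n+1}$ into the definition of $\mathpzc{G}_{g,n+1}$ and integrating term by term over the divisors $D(z_i;{\bm \nu})$; all integrals involved are well-defined and path-independent by RTR\ref{RTR3} and Lemma \ref{lem:residue3}, and the symmetry RTR\ref{RTR1} lets us identify the integrated $\omega_{g,n}$'s with the appropriate $\mathpzc{G}_{g,n}$'s. The organizing tool is the loop equation of Lemma \ref{lem:GLP}: since on our curve $\Delta y=2y$ and $2\omega_{0,1}=2y\,dx$, it may be rewritten as
\[
\omega_{g,n+1}(z_0,\zeta)=\frac{P_{g,n+1}(z_0,\zeta)-\text{Rec}^{\mathscr{Q}}_{g,n+1}(z_0,\zeta)}{2y(z_0)\,dx(z_0)},
\]
which already exhibits the denominator $2y(z_0)dx(z_0)$ appearing throughout the statement. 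The hypothesis $2g-2+n>1$ ensures $\text{Rec}^{\mathscr{Q}}_{g,n+1}$ is the generic expression \eqref{RecQ}.

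Integrating this identity in $\zeta_1,\dots,\zeta_n$, the contributions of $\omega_{g-1,n+2}(z_0,z_0,\zeta)$, of the $**$-sum, and of the $\mathscr{Q}$-derivative term pass through the integral immediately (the operator $d_{z_0}$ commuting with the $\zeta$-integrations), and reproduce verbatim the first, third, and fourth lines of the claimed formula. The remaining linear term $\sum_i\Delta\omega_{0,2}(z_0,\zeta_i)\,\omega_{g,n}(z_0,\widehat{J}_i)$ integrates to a \emph{preliminary} contribution
\[
-\sum_{i}\frac{\mathpzc{G}_{0,2}(z_0,z_i)-\mathpzc{G}_{0,2}(\sigma(z_0),z_i)}{2y(z_0)\,dx(z_0)}\,\mathpzc{G}_{g,n}(z_0,\widehat{J}_i),
\]
whose free variable is $z_0$, not the $z_j$ of the statement. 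The entire discrepancy between this and the claimed linear term is thus the contribution of $P_{g,n+1}$, which must be evaluated.

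To compute it, I would pass to the global residue theorem, which is the cleanest way to see $P_{g,n+1}$'s effect. Because ${\rm K}(z_0,p)$ vanishes at the poles of $y\,dx$ and $\widetilde{\mathcal{P}}\subset\mathcal{P}$, the integrand has no residue at $\widetilde{\mathcal{P}}$ (the same mechanism used to drop $\widetilde{\mathcal{P}}_+$ in the proof of Proposition \ref{thm:sym}), so the residues over $\mathcal{R}\cup\sigma(J_0)$ in \eqref{eq:RTR} equal minus those at the complementary poles in $p$: the simple pole $p=z_0$ of the kernel (which reproduces the loop-equation piece above) and the double poles $p=\zeta_i$ coming from $\omega_{0,2}(p,\zeta_i)$ inside $\Delta\omega_{0,2}$. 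The residue at $p=\zeta_i$ is a total $\zeta_i$-derivative of ${\rm K}(z_0,\zeta_i)\,\omega_{g,n}(\zeta_i,\widehat{J}_i)$; integrating against $D(z_i;{\bm \nu})$ and using $\sum_p\nu_p=1$ to get $\int_{\sigma(z_i)}^{z_i}B(z_0,\cdot)=\mathpzc{G}_{0,2}(z_0,z_i)-\mathpzc{G}_{0,2}(z_0,\sigma(z_i))$ together with $2{\rm K}(z_0,z_i)=\frac{\int_{\sigma(z_i)}^{z_i}B(z_0,\cdot)}{2y(z_i)dx(z_i)}$, the boundary term at $z_i$ is exactly
\[
\frac{\mathpzc{G}_{0,2}(z_0,z_j)-\mathpzc{G}_{0,2}(z_0,\sigma(z_j))}{2y(z_j)\,dx(z_j)}\,\mathpzc{G}_{g,n}(z_j,\widehat{J}_j),
\]
the second term of the claimed linear part, now carrying the denominator $2y(z_j)dx(z_j)$ and free variable $z_j$.

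The main obstacle is the final reconciliation: I must combine the preliminary linear term with the remaining boundary contributions at the fixed points $p\in\mathcal{P}$ produced by integrating the $\zeta_i$-residue, and show they conspire to shift the free variable of $\mathpzc{G}_{g,n}$ in the first summand from $z_0$ to $z_j$, producing $-\sum_j\frac{\mathpzc{G}_{0,2}(z_0,z_j)-\mathpzc{G}_{0,2}(\sigma(z_0),z_j)}{2y(z_0)dx(z_0)}\mathpzc{G}_{g,n}(z_j,\widehat{J}_j)$. This amounts to an identity relating the difference $\mathpzc{G}_{g,n}(z_0,\widehat{J}_i)-\mathpzc{G}_{g,n}(z_i,\widehat{J}_i)$ to the fixed-point residues, and tracking it through the interplay of the free variable with the integration endpoints is where the real work lies; the other three lines, by contrast, drop out of the loop equation with no effort.
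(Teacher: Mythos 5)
Your derivation is, in substance, exactly the paper's proof: the paper proves this lemma by integrating the identity \eqref{pole2} (equivalently, the loop equation of Lemma \ref{lem:GLP} combined with the explicit form \eqref{Pg,n+1-1} of $P_{g,n+1}$ --- the same statement) over the divisors $D(z_i;{\bm \nu})$ and rearranging, and everything you do up to your final paragraph carries this out correctly. The problem is the final paragraph itself: the ``reconciliation'' you are seeking does not exist, and nothing is missing. The boundary contributions at the points $p\in\mathcal{P}$ produced by integrating the total-derivative term vanish identically, for the reason you yourself give elsewhere: ${\rm K}(z_0,\zeta_i)$ has a zero at every pole of $y\,dx$ (its denominator $\Delta y\,dx=2y\,dx$ blows up there), while $\omega_{g,n}$ is regular there by RTR\ref{RTR2}, since no point of $\mathcal{P}$ can be an effective ramification point. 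So your computation terminates, and its output is the formula whose first summand in the linear term is
\begin{equation*}
-\sum_{j=1}^n\frac{\mathpzc{G}_{0,2}(z_0,z_j)-\mathpzc{G}_{0,2}(\sigma(z_0),z_j)}{2y(z_0)\,dx(z_0)}\,\mathpzc{G}_{g,n}(z_0,\widehat{J}_j),
\end{equation*}
with free variable $z_0$, while only the second summand carries $\mathpzc{G}_{g,n}(z_j,\widehat{J}_j)$. No identity can convert the one into the other: they are genuinely different $1$-forms in $z_0$.

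The resolution is that the statement as printed contains a misprint: in the first factor of the second line, $\mathpzc{G}_{g,n}(z_j,\widehat{J}_j)$ should read $\mathpzc{G}_{g,n}(z_0,\widehat{J}_j)$, which is precisely what your computation produces. You can confirm this from how the lemma is used. Lemma \ref{lem:hrec}, obtained by specializing all variables to $z$, contains the term $\partial_{z_0}\bigl(\mathpzc{G}_{g,n}(z_0,z,\ldots,z)/(n-1)!\bigr)\big|_{z_0=z}$ in \eqref{eq:hreceq}. Such a $z_0$-derivative of $\mathpzc{G}_{g,n}$ can only arise if the two summands of the linear term carry \emph{different} free variables: the simple pole of $\mathpzc{G}_{0,2}(z_0,z_j)$ at $z_j=z_0$ then pairs with the difference of $\mathpzc{G}_{g,n}(z_j,\widehat{J}_j)/2y(z_j)dx(z_j)$ and $\mathpzc{G}_{g,n}(z_0,\widehat{J}_j)/2y(z_0)dx(z_0)$, producing exactly the derivative terms in the first two lines of \eqref{eq:hreceq}. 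With the statement as printed (both factors $\mathpzc{G}_{g,n}(z_j,\widehat{J}_j)$), the specialization would yield no $z_0$-derivative of $\mathpzc{G}_{g,n}$ at all, contradicting \eqref{eq:hreceq}. So: delete your last paragraph, state the conclusion of your own computation as the result (noting the misprint), and your proof is complete and identical in method to the paper's.
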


\begin{proof}

In the proof of Lemma \ref{lem:poles}, after taking residues in the recursion formula \eqref{eq:RTR} we obtained the expression \eqref{pole2}
\begin{equation}
    \omega_{g,n+1}(z_0,J)=-\frac{\text{Rec}^{\mathscr{Q}}_{g,n+1}(z_0,J)}{2\omega_{0,1}(z_0)} +2\sum_{i=1}^n d_{z_i}\left({\rm K}(z_0,z_i)\cdot\omega_{g,n}(J)\right).
\end{equation}
Integrating both sides and rearranging, we obtain the result.

\end{proof}


We note that the last term can also be written, using the notation $\partial_z(f(z)dz):=\partial_z f(z)dz$,
\begin{align}\label{eq:gproductrule}
    &-\frac{\mathscr{Q}}{{2 y}(z_0)}{d_{z_0}}\!\!\left(\frac{\mathpzc{G}_{g-\frac{1}{2},n+1}(z_0,z_J)}{dx(z_0)}\right)\notag\\
    &\hspace{2cm}=-\dfrac{\mathscr{Q}}{{2 y}(z_0)x'(z_0)}\dfrac{\partial}{\partial z_0}\mathpzc{G}_{g-\frac{1}{2},n+1}(z_0,z_J)+\frac{\mathscr{Q} x''(z_0)}{{2 y}(z_0)x'(z_0)^2}\mathpzc{G}_{g-\frac{1}{2},n+1}(z_0,z_J)
    \end{align}

Specializing all variables to $z$, it is easy to deduce the recursion for $\mathpzc{H}_{g,n+1}$ from the recursion for $\mathpzc{G}_{g,n+1}$:
\begin{lemma}[Recursion for $\mathpzc{H}_{g,n}$]
\label{lem:hrec}
For any $(g,n)$ with $2g-2+n > 1$, we have
\allowdisplaybreaks[0]
\begin{align}\label{eq:hreceq}
    \mathpzc{H}_{g,n+1}(z)=&-\frac{1}{{2 y}(z)x'(z)}\frac{\partial}{\partial z_0}\biggl(\mathpzc{H}_{g-1,n+2}(z_0)-\frac{\mathpzc{G}_{g-1,n+2}(z_0,z,\ldots,z)}{(n+1)!}+\frac{\mathpzc{G}_{g,n}(z_0,z,\ldots,z)}{(n-1)!}\notag\\
    &\hspace{8.4cm}+\mathscr{Q}\,\frac{{\mathpzc{G}_{g-\frac{1}{2},n+1}(z_0,z,\ldots z)}}{n!}\biggr)\bigg|_{z_0=z} \notag \\
    &-\frac{d}{dz}\left(\frac{1}{{2 y}(z)x'(z)}    \right) \mathpzc{H}_{g,n}(z)+ \frac{\mathpzc{G}_{0,2}(\sigma(z),z)-\mathpzc{G}_{0,2}(z,\sigma(z))}{{2 y}(z)dx(z)}\mathpzc{H}_{g,n}(z)  \notag\\
    &+\frac{\mathscr{Q} x''(z)}{{2 y}(z)x'(z)^2}\mathpzc{H}_{g-\frac{1}{2},n+1}(z)-\frac{1}{{2 y}(z)dx(z)}\sum^{**}_{\substack{g_1+g_2=g\\n_1+n_2=n}} \mathpzc{H}_{g_1,n_1+1}(z)\cdot \mathpzc{H}_{g_2,n_2+1}(z).
\end{align}
\end{lemma}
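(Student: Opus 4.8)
The plan is to derive the $\mathpzc{H}_{g,n+1}$ recursion \eqref{eq:hreceq} directly from the $\mathpzc{G}_{g,n+1}$ recursion \eqref{eq:grec} by restricting to the total diagonal. Explicitly, in \eqref{eq:grec} I would first set the parameters $z_1=\dots=z_n=z$ while keeping the form-variable $z_0$ free, then send $z_0\to z$, and finally divide by $n!$; by the definition of $\mathpzc{H}_{g,n+1}$ this is exactly $\mathpzc{H}_{g,n+1}(z)$. The right-hand side of \eqref{eq:grec} splits into four groups --- the $\omega_{g-1,n+2}$ head term, the $\mathpzc{G}_{0,2}$ terms, the quadratic ($\sum^{**}$) terms, and the $\mathscr{Q}$ term --- and I would specialize each separately, the only genuinely delicate point being the diagonal behaviour $z_0\to z_j$ of the $\mathpzc{G}_{0,2}$ terms.

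The quadratic and $\mathscr{Q}$ terms are immediate. For the quadratic term, for fixed bipartition type $(g_i,n_i)$ there are $\binom{n}{n_1}$ splittings of $J$, all equal after specialization, and $\tfrac{1}{n!}\binom{n}{n_1}=\tfrac{1}{n_1!\,n_2!}$ combines with the factorials relating $\mathpzc{G}$ to $\mathpzc{H}$ to give the advertised $-\tfrac{1}{2ydx}\sum^{**}\mathpzc{H}_{g_1,n_1+1}\mathpzc{H}_{g_2,n_2+1}$. For the $\mathscr{Q}$ term I would use the product-rule rewriting \eqref{eq:gproductrule}: upon specialization its first piece becomes the $-\mathscr{Q}\,\mathpzc{G}_{g-\frac12,n+1}(z_0,z,\dots,z)/n!$ entry inside the bracket $\tfrac{\partial}{\partial z_0}(\cdots)|_{z_0=z}$ of \eqref{eq:hreceq}, while its second piece becomes the term $\tfrac{\mathscr{Q}x''}{2yx'^2}\mathpzc{H}_{g-\frac12,n+1}$.

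For the head term, the subtlety is that $\omega_{g-1,n+2}(z_0,z_0,\zeta_J)$ freezes two of its slots at $z_0$, so its specialization is neither $\mathpzc{H}_{g-1,n+2}$ nor $\mathpzc{G}_{g-1,n+2}$ but something with one ``extra'' tied slot. I would exploit the symmetry of $\omega_{g-1,n+2}$ together with the fact that $\mathpzc{H}_{g-1,n+2}(z_0)-\tfrac{1}{(n+1)!}\mathpzc{G}_{g-1,n+2}(z_0,z,\dots,z)$ vanishes at $z_0=z$: its $z_0$-derivative at $z_0=z$ subtracts the free-slot ($\partial_{z_0}$) contribution from the all-slots-tied derivative, isolating exactly the single frozen interior slot appearing in the head term of \eqref{eq:grec}. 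This reproduces the first two entries of the bracket in \eqref{eq:hreceq}.

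The \textbf{main obstacle} is the $\mathpzc{G}_{0,2}$ terms, where one must disentangle the pole at $z_0=z_j$. Using $\mathpzc{G}_{0,2}(z_0,w)=\bigl(\sum_{p}\tfrac{\nu_p}{z_0-p}-\tfrac{1}{z_0-w}\bigr)dz_0$, I would split each of the two fractions into its regular part and the diagonal pole $\tfrac{1}{z_0-z_j}$. For the regular parts, the $\sum_p\nu_p/(z_0-p)$ contributions coincide in the two fractions and cancel on the diagonal, leaving the finite coefficient $\tfrac{\mathpzc{G}_{0,2}(\sigma(z),z)-\mathpzc{G}_{0,2}(z,\sigma(z))}{2ydx}$ of $\mathpzc{H}_{g,n}$. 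For the pole parts I would Taylor-expand $\mathpzc{G}_{g,n}(z_j,\widehat J_j)$ and the two prefactors $\tfrac{1}{2y(z_0)x'(z_0)}$, $\tfrac{1}{2y(z_j)x'(z_j)}$ about $z_j=z_0$: the leading $\tfrac{1}{z_0-z_j}$ poles from the two fractions cancel --- consistent with Lemma \ref{lem:poles}, which guarantees $\mathpzc{G}_{g,n+1}$ is regular on the total diagonal --- the first subleading term produces the remaining bracket entry $\tfrac{\partial}{\partial z_0}\tfrac{\mathpzc{G}_{g,n}(z_0,z,\dots,z)}{(n-1)!}\big|_{z_0=z}$, and the mismatch between the two prefactors produces $-\tfrac{d}{dz}\bigl(\tfrac{1}{2yx'}\bigr)\mathpzc{H}_{g,n}$. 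Checking that these spurious poles cancel cleanly and that the finite remainders assemble into exactly the three advertised pieces is the crux; everything else is combinatorial bookkeeping and the factorial matching between $\mathpzc{G}$ and $\mathpzc{H}$.
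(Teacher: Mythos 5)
Your proposal is correct and is essentially the paper's own (largely unwritten) proof: the paper obtains Lemma \ref{lem:hrec} simply by ``specializing all variables to $z$'' in the $\mathpzc{G}$-recursion \eqref{eq:grec}, and your four-way split --- the head term via the chain-rule difference $\partial_{z_0}\bigl(\mathpzc{H}_{g-1,n+2}(z_0)-\mathpzc{G}_{g-1,n+2}(z_0,z,\dots,z)/(n+1)!\bigr)\big|_{z_0=z}$, the quadratic term via $\tfrac{1}{n!}\binom{n}{n_1}=\tfrac{1}{n_1!\,n_2!}$, the $\mathscr{Q}$-term via \eqref{eq:gproductrule}, and the diagonal-pole cancellation in the $\mathpzc{G}_{0,2}$ terms --- supplies exactly the bookkeeping the paper omits. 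One caveat: two quirks you silently absorb are typos in the paper rather than gaps in your argument, namely an honest computation from \eqref{eq:gproductrule} lands on $+\mathscr{Q}\,\mathpzc{G}_{g-\frac{1}{2},n+1}(z_0,z,\dots,z)/n!$ inside the bracket (the sign the paper itself uses later in \eqref{eq:hggg1}), and producing the bracket entry $\mathpzc{G}_{g,n}(z_0,z,\dots,z)/(n-1)!$ from the diagonal pole requires the first fraction in \eqref{eq:grec} to multiply $\mathpzc{G}_{g,n}(z_0,\widehat{J}_j)$ rather than $\mathpzc{G}_{g,n}(z_j,\widehat{J}_j)$ (as \eqref{pole2} in fact yields), which is precisely what your Taylor expansion of the $\mathpzc{G}$-factor about the diagonal tacitly assumes.
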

\allowdisplaybreaks[1]

\begin{proposition}[Recursion for $T_{m}^{\rm \mathsmaller{TR}}$]\label{prop:trec}
For any $m\geq1$, we have
\begin{align}\label{eq:trec}
    &2T_{-1}^{\rm \mathsmaller{TR}}T_{m+1}^{\rm \mathsmaller{TR}}+\left(\frac{d}{dz}-\frac{1}{\beta}\left(\frac{ y'(z)}{y(z)}-\frac{{\mathpzc{G}_{0,2}(\sigma(z),z)+\mathpzc{G}_{0,2}(z,\sigma(z))}}{dz}\right)-\frac{x''(z)}{x'(z)}\right)T_m^{\rm \mathsmaller{TR}} \notag\\
    &\hspace{9cm}+ \sum_{j=0}^m\mathit{T}^{\rm \mathsmaller{TR}}_{m-j}\mathit{T}^{\rm \mathsmaller{TR}}_{j}=0.
\end{align}
\end{proposition}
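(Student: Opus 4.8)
The plan is to derive \eqref{eq:trec} by summing the recursion for $\mathpzc{H}_{g,n+1}$ from Lemma~\ref{lem:hrec} over all $(g,n)$ contributing to a fixed level, with the correct $\beta$-weights. Recall from \eqref{eq:TmTRH} that $\mathpzc{H}_{g,n}$ enters $T_m^{\rm TR}\,dz$ with weight $\beta^{-n/2}$ at level $m=2g-2+n$, and that the unique level-$(-1)$ term is $T_{-1}^{\rm TR}\,dz=\beta^{-1/2}\mathpzc{H}_{0,1}=\beta^{-1/2}y\,dx$, so $2T_{-1}^{\rm TR}=2\beta^{-1/2}yx'$ is (up to the weight) the reciprocal of the universal prefactor $\tfrac{1}{2yx'}$ appearing throughout \eqref{eq:hreceq}. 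First I would multiply the recursion for $\mathpzc{H}_{g,n+1}$ by $2y(z)x'(z)\,\beta^{-(n+2)/2}$ and sum over $2g+n=m+2$ (the level-$(m+1)$ locus). Since $2T_{-1}^{\rm TR}T_{m+1}^{\rm TR}\,dz=\sum_{2g+n=m+2}2yx'\,\beta^{-(n+2)/2}\mathpzc{H}_{g,n+1}$, the left-hand side assembles into the leading term of \eqref{eq:trec}, and it remains to identify the right-hand terms.

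Matching then proceeds term by term. After multiplication by $2yx'$, the first line of \eqref{eq:hreceq} becomes $-\partial_{z_0}(\cdots)\big|_{z_0=z}$; using the combinatorial identity relating the diagonal total derivative $\tfrac{d}{dz}\!\big[\mathpzc{G}(z,\dots,z)\big]$, which differentiates every slot, to the single-slot derivative $\partial_{z_0}\mathpzc{G}(z_0,z,\dots,z)\big|_{z_0=z}$, together with the factorials $\tfrac{1}{(n\pm1)!},\tfrac1{n!}$, this reconstructs the derivative term $-\tfrac{d}{dz}T_m^{\rm TR}$. The measure term $-\tfrac{d}{dz}\!\big(\tfrac{1}{2yx'}\big)\mathpzc{H}_{g,n}$ multiplied by $2yx'$ yields $\big(\tfrac{y'}{y}+\tfrac{x''}{x'}\big)\mathpzc{H}_{g,n}$, and since the ambient weight $\beta^{-(n+2)/2}=\beta^{-1}\beta^{-n/2}$ and $\mathpzc{H}_{g,n}$ sits at level $m$, summation produces $\tfrac1\beta\big(\tfrac{y'}{y}+\tfrac{x''}{x'}\big)T_m^{\rm TR}$. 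The product term factorizes, as $\beta^{-(n_1+1)/2}\beta^{-(n_2+1)/2}=\beta^{-(n+2)/2}$, into a convolution of two $T^{\rm TR}$-series, and the $\mathpzc{G}_{0,2}$ and $\mathscr{Q}x''$ terms supply the rest.

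The hard part will be the bookkeeping reconciling the \emph{restricted} product $\sum^{**}$ of \eqref{eq:hreceq}, which omits factors of $\omega_{0,1}$ and $\omega_{0,2}$, with the \emph{unrestricted} convolution $\sum_{j=0}^m T_{m-j}^{\rm TR}T_j^{\rm TR}$ in \eqref{eq:trec}. In $T^{\rm TR}$-language the factor $\omega_{0,1}$ corresponds to $T_{-1}^{\rm TR}$ (absent from $\sum_{j=0}^m$ in any case) while $\omega_{0,2}$ corresponds to the $\mathpzc{G}_{0,2}$-part of $T_0^{\rm TR}$; hence $\sum^{**}$ equals the full convolution with this part removed from the two boundary factors in $2T_0^{\rm TR}T_m^{\rm TR}$. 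Here the special $m=0$ formula $T_0^{\rm TR}\,dz=-\tfrac1\beta\mathpzc{G}_{0,2}(\sigma(z),z)+\tfrac1{\sqrt\beta}\mathpzc{G}_{\frac12,1}(z)$ is essential: the resulting $\mathpzc{G}_{0,2}(\sigma(z),z)\,T_m^{\rm TR}$ correction must combine with the \emph{antisymmetric} term $\mathpzc{G}_{0,2}(\sigma(z),z)-\mathpzc{G}_{0,2}(z,\sigma(z))$ of \eqref{eq:hreceq} to produce exactly the \emph{symmetric} operator coefficient $\tfrac1\beta\,\tfrac{\mathpzc{G}_{0,2}(\sigma(z),z)+\mathpzc{G}_{0,2}(z,\sigma(z))}{dz}$ of \eqref{eq:trec} --- getting these relative signs right is the crux. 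A parallel subtlety fixes the coefficient of $\tfrac{x''}{x'}$: the measure term supplies only $\tfrac1\beta\tfrac{x''}{x'}$, and it is precisely the identity $\mathscr{Q}\beta^{-1/2}=1-\beta^{-1}$, i.e.\ $\mathscr{Q}=\beta^{1/2}-\beta^{-1/2}$, that lets the $\mathscr{Q}x''$ term contribute the complementary $\big(1-\tfrac1\beta\big)\tfrac{x''}{x'}$ and complete the coefficient to $\tfrac{x''}{x'}$. This is the step at which $\mathscr{Q}$ is traded for $\beta$.

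Finally, since Lemma~\ref{lem:hrec} is valid only for levels $\geq 3$ (that is $2g-2+n>1$), the base case $m=1$ of \eqref{eq:trec} --- which receives contributions from the level-two differentials $\omega_{1,1}$, $\omega_{\frac12,2}$, $\omega_{0,3}$ governed by the non-universal $\text{Rec}^{\mathscr{Q}}_{0,3}$, $\text{Rec}^{\mathscr{Q}}_{\frac12,2}$, $\text{Rec}^{\mathscr{Q}}_{1,1}$ of \eqref{Rec0,3}--\eqref{Rec1,1} --- must be verified separately by direct computation.
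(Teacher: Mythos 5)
Your proposal follows the paper's own proof in every essential respect: the paper likewise writes $T_{m+1}^{\rm TR}(z)\,dz$ as the $\beta$-weighted sum of $\mathpzc{H}_{g,n+1}$ over the level $2g-2+n=m$, substitutes Lemma \ref{lem:hrec}, telescopes the sum inside the $z_0$-derivative into $\beta^{\frac12}T_m^{\rm TR}(z_0)\,dz_0$ (your diagonal-versus-single-slot identity is exactly this step), converts the restricted sum $\sum^{**}$ into the full convolution plus a $\mathpzc{G}_{0,2}(\sigma(z),z)\,T_m^{\rm TR}$ correction coming from the $\mathpzc{G}_{0,2}$-part of $T_0^{\rm TR}$, combines that correction with the antisymmetric $\mathpzc{G}_{0,2}$ term of \eqref{eq:hreceq} to produce the symmetric coefficient in \eqref{eq:trec}, and invokes $\mathscr{Q}=\beta^{\frac12}-\beta^{-\frac12}$ to merge the $\mathscr{Q}x''$ term with the measure term. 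So the core of your argument is the paper's argument, and it is correct.

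Your last paragraph, however, is confused. The $m=1$ case of \eqref{eq:trec} computes $T_2^{\rm TR}$, which is assembled from $\mathpzc{H}_{\frac32,1},\mathpzc{H}_{1,2},\mathpzc{H}_{\frac12,3},\mathpzc{H}_{0,4}$, i.e.\ from the differentials $\omega_{\frac32,1},\omega_{1,2},\omega_{\frac12,3},\omega_{0,4}$ with $2g+n=3$ --- all governed by the universal ${\rm Rec}^{\mathscr{Q}}_{g,n+1}$ of \eqref{RecQ}. The differentials $\omega_{1,1},\omega_{\frac12,2},\omega_{0,3}$ you name sit one level lower: they constitute $T_1^{\rm TR}$ and enter the $m=1$ computation only as inputs, never through their own non-universal recursion formulas \eqref{Rec0,3}--\eqref{Rec1,1}; the non-universal level influences the proof solely through the special form of $T_0^{\rm TR}$, which you already treat correctly. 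What \emph{is} true is that Lemma \ref{lem:hrec} as literally stated ($2g-2+n>1$) does not cover the pairs $(g,n)$ with $2g-2+n=1$ that are needed when $m=1$; but its proof --- integrating \eqref{pole2}, which holds for all $2g+n\geq3$ --- covers this boundary case verbatim, and this extension, rather than a separate direct computation at $m=1$, is how one should read the paper's proof, which applies the lemma at $2g-2+n=m$ for every $m\geq1$.
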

\begin{proof}
From \eqref{eq:TmTRH}, we have

\begin{equation}
    T^{{\rm \mathsmaller{TR}}}_{m+1}(z)dz=\sum_{\substack{2g-2+n=m\\g\geq0,n\geq0}}{\beta^{-\frac{n+1}{2}}} \, \mathpzc{H}_{g,n+1}(z), \qquad m\geq1.
\end{equation}
Rewriting the right-hand side with the $\mathpzc{H}$-recursion, we consider the sum appearing inside the derivative of \ref{eq:hreceq}. Recalling $\mathscr{Q}=\beta^{\frac{1}{2}}-\beta^{-\frac{1}{2}}$, we can write this part of $T_{m+1}^{\rm \mathsmaller{TR}}(z)dz$ as

\begin{align}
    &\sum_{\substack{2g-2+n=m\\g\geq0,n\geq0}} \frac{1}{\beta^{\frac{n+1}{2}}}\Bigg(\mathpzc{H}_{g-1,n+2}(z_0)-\dfrac{\mathpzc{G}_{g-1,n+2}(z_0,z,\ldots,z)}{(n+1)!}+\dfrac{\mathpzc{G}_{g,n}(z_0,z,\ldots,z)}{(n-1)!}\nonumber\\ 
     &\hspace{6cm}+\left({\beta^{\frac{1}{2}} -\beta^{-\frac{1}{2}}}\right)\frac{\mathpzc{G}_{g-\frac{1}{2},n+1}(z_0,z,\ldots,z)}{n!}\Bigg) \label{eq:hggg1}\\
     &=\sum_{\substack{2g-2+n=m\\g\geq0,n\geq2}}\frac{1}{\beta^{\frac{n-1}{2}}} \mathpzc{H}_{g,n}(z_0)+\mathpzc{G}_{\frac{m+1}{2},1}(z_0)\label{eq:hggg2}\\
    &=\beta^{\frac{1}{2}}\sum_{\substack{2g-2+n=m\\g\geq0,n\geq1}}\beta^{-\frac{n}{2}}\mathpzc{H}_{g,n}(z_0)\\
    &=\beta^{\frac{1}{2}}\,T_m^{\rm \mathsmaller{TR}}(z_0)dz_0,
\end{align}
where we passed from \eqref{eq:hggg1} to \eqref{eq:hggg2} with a careful manipulation of summation indices.

Next we treat the product term in the recursion, and write it in terms of $T^{\rm \mathsmaller{TR}}_m$: 

\begin{align}
    & \sum^{**}_{\substack{g_1+g_2=g\\n_1+n_2=n\\2g-2+n=m}}\beta^{-\frac{n+1}{2}}\mathit{\mathpzc{H}}_{g_1,n_1+1}(z)\mathit{\mathpzc{H}}_{g_2,n_2+1}(z)= 
     \beta^{\frac12}\sum_{j=0}^m\mathit{T}^{\rm \mathsmaller{TR}}_{m-j}(z)\mathit{T}^{\rm \mathsmaller{TR}}_{j}(z) + \frac{1}{\beta^{\frac12}}2\mathpzc{G}_{0,2}(\sigma({z}),z)\mathit{T}^{\rm \mathsmaller{TR}}_m(z).
\end{align}

We combine these with the $\mathpzc{H}$-recursion of Lemma \ref{lem:hrec} to obtain the recursion for $T_m^{\rm \mathsmaller{TR}}$:
\begin{align}
    T^{\rm \mathsmaller{TR}}_{m+1}(z)=&-\beta^{\frac12}\frac{1}{2 y(z) x'(z)}\frac{\partial}{\partial z}T^{\rm \mathsmaller{TR}}_{m}(z)-\frac{1}{\beta^{\frac12}}\left(\frac{\partial}{\partial z}\frac{1}{2 y(z)x'(z)}\right)T^{\rm \mathsmaller{TR}}_m(z)\nonumber\\ &-\beta^{\frac12}\frac{1}{2 y(z) x'(z)}\sum_{j=0}^m\mathit{T}^{\rm \mathsmaller{TR}}_{m-j}(z)\mathit{T}^{\rm \mathsmaller{TR}}_{j}(z)  +\frac{1}{\beta^{\frac12}}\frac{\mathpzc{G}_{0,2}(\sigma(z),z)-\mathpzc{G}_{0,2}(z,\sigma(z))}{{2 y}(z)dx(z)}T_m^{\rm \mathsmaller{TR}}\nonumber\\
    &-\frac{1}{\beta^{\frac12}}\frac{1}{ y(z) dx(z)}\mathpzc{G}_{0,2}(\sigma({z}),z)\mathit{T}^{\rm \mathsmaller{TR}}_m(z) +\frac{\mathscr{Q}}{2 y(z)x'(z)}\frac{x''(z)}{x'(z)}T_{m}^{\rm \mathsmaller{TR}}(z).
    \end{align}
    Recalling $\mathscr{Q}=\beta^{\frac12}-\beta^{-\frac12}$, the last terms of the first and third lines combine nicely and we can write this as
    \begin{align}
    =&-\beta^{\frac12}\frac{1}{2 y(z) x'(z)}\frac{d}{d z}T^{\rm \mathsmaller{TR}}_{m}(z)+\frac{1}{\beta^{\frac12}}\left(\frac{y ' (z)}{2 y(z)^2 x'(z)}\right)T^{\rm \mathsmaller{TR}}_m(z)\nonumber\\ &-\beta^{\frac12}\frac{1}{2 y(z) x'(z)}\sum_{j=0}^m\mathit{T}^{\rm \mathsmaller{TR}}_{m-j}(z)\mathit{T}^{\rm \mathsmaller{TR}}_{j}(z)  -\frac{1}{\beta^{\frac12}}\frac{\mathpzc{G}_{0,2}(\sigma(z),z)+\mathpzc{G}_{0,2}(z,\sigma(z))}{{2 y}(z)dx(z)}T_m^{\rm \mathsmaller{TR}}\nonumber\\
    &\hspace{7cm}+\beta^{\frac12}\frac{1}{2 y(z)x'(z)}\frac{x''(z)}{x'(z)}T_{m}^{\rm \mathsmaller{TR}}(z).
\end{align}
Multiplying by $T_{-1}^{\rm \mathsmaller{TR}}=\frac{1}{\beta^{\frac12}}y(z)x'(z)$ and rearranging, we obtain the result.\end{proof}

\begin{proof}[Proof of Theorem \ref{thm:main}]
Due to Assumption \ref{ass:rtr}, the semiclassical limit condition in the definition of quantum curve implies that $q_0$ (and $U_0$) vanishes, and that $V_0(x)=-\beta^{-1}x'(z)^2Q(x(z))$. Then comparing the ($m\geq1$) recursion \eqref{eq:trec} for $T^{\rm \mathsmaller{TR}}$ with the recursion \eqref{eq:ricT} for $T^{\rm \mathsmaller{WKB}}$ we immediately see that demanding the recursion relations are identical requires that we set all $U_j(z)=0$ for $j>1$ and $V_j(z)=0$ for $j>2$; matching the terms linear in $T_m$ gives $U_1(z)$ (and thus $q_1$).

The remaining equations \eqref{eq:ricT} for $m=-1,0$ give formulas for $V_1$ and $V_2$ (hence $r_1$, $r_2$) in terms of $T^{{\rm \mathsmaller{TR}}}_{-1}$, $T^{{\rm \mathsmaller{TR}}}_{0}$, $T^{{\rm \mathsmaller{TR}}}_{1}$ (whose recursions we omitted) and $U_1$, which we already determined. In terms of $z$, explicitly, we arrive at the following expressions after tedious computations:
\begin{align} 
    &q_0=0,\qquad\qquad\quad\;\;\;  q_1=\frac{1}{\beta x'(z)}\left(-\frac{ y'(z)}{ y(z)}+\frac{2}{z-\sigma(z)} - \sum_{p \in \mathcal{P}}\frac{{\nu}_p+{\nu}_{\sigma(p)}}{z-p}\right)\label{eq:preciseformula1}\\
    &r_0=-\frac{1}{\beta}Q(x(z)),\qquad r_1=\frac{ y(z)}{\beta^{3/2}x'(z)}\sum_{p\in\mathcal{P}}\frac{\hat{\nu}_p-\hat{\nu}_{\sigma(p)}}{z-p},\label{eq:preciseformula2}\\
    &r_2= \frac{y(z)}{\beta^{2}x'(z)}\sum_{p\in \mathcal{P}^{(1)}}\frac{\hat{\nu}_p\,\hat{\nu}_{\sigma(p)}}{\underset{w=p}{{\rm Res}}\,{ y(w)dx(w)}}\frac{1}{(z-p)},\label{eq:preciseformula3}
\end{align}
where 
\begin{equation}
    \hat{\nu}_p={\nu_p}-{\rm ord}_p(y)\left(\tfrac{1+\mu_p}{2}\right)(\beta-1).
\end{equation}

It remains to show that $q_1$, and $r_1$, $r_2$ are indeed well-defined as rational functions of $x$ --- in particular, we must check these expressions are invariant under the involution. Since the formulas (\ref{eq:preciseformula1}--\ref{eq:preciseformula3}) are of exactly the same form as the unrefined case up to the shift and $\beta$ factors, this follows from the result of \cite{IKT1,IKT2}.
\end{proof}


\section{Examples and the Nekrasov-Shatashvili limit}

In this section, we apply our results to an interesting collection of explicit examples of genus zero degree two curves, which played a central role in \cite{IKT1,IKT2,Iwaki:2021zif,IWAKI2022108191}. We write their quantum curves explicitly, and take the so-called Nekrasov-Shatashvili limit, observing a simple interpretation of the parameters ${\bm \mu}$ in this case.

\subsection{Spectral curves of hypergeometric type}

Although we have formulated and studied the recursion for genus zero degree two refined spectral curves, in fact we have been motivated by a set of examples arising from the ``Gauss hypergeometric'' curve $\Sigma$ defined by:

\begin{equation}
y^2= \dfrac{m_\infty^2 x^2-(m_\infty^2+m_0^2-m_1^2)x+m_0^2}{x^2(x-1)^2}
\end{equation}
and various limits of it. In particular, the corresponding quantum curve in this case turns out to the classical Gauss hypergeometric equation, and for the various other examples we obtain various confluent limits (more precise descriptions of their relationships can be found in e.g. \cite{IKT2}). Hence, we refer to such examples as \emph{(refined) spectral curves of hypergeometric type}.

Explicitly and by definition, these are nine (families of) curves of the form
\begin{equation}
    y^2=Q(x)
\end{equation}
given by the rational functions listed in Table \ref{table:classical} below, together with the functions $x$ and $y$ (and the canonical choice \eqref{eq:canonicalB} for $B$). It is easy to verify that these curves, together with a choice of divisor $D({\bm \mu})$ are all examples of genus zero degree two refined spectral curves, so that all of our results thus far may be applied.

\begin{table}[h]
\begin{center}
\begin{tabular}{ccc}\hline
${\rm Equation}$ & $Q(x)$  & Assumption
\\\hline
\parbox[c][4.0em][c]{0em}{}
${\rm Gauss}$
&
\begin{minipage}{.35\textwidth}
\begin{center}
$\dfrac{{m_\infty}^2 x^2 
- ({m_\infty}^2 + {m_0}^2 - {m_1}^2)x 
+ {m_0}^2}{x^2 (x-1)^2}$
\end{center}
\end{minipage}
& ~~~\quad
\begin{minipage}{.35\textwidth}
\begin{center}
$m_0, m_1, m_\infty \neq 0$,\\
$m_0 \pm m_1 \pm m_\infty \ne 0$.
\end{center}
\end{minipage}
\\\hline
\parbox[c][3.0em][c]{0em}{}
${\rm Degenerate~Gauss}$
&
\begin{minipage}{.35\textwidth}
\begin{center}
$\dfrac{{m_\infty}^2 x + {m_1}^2 - {m_\infty}^2}{x(x-1)^2}$
\end{center}
\end{minipage}
&
\begin{minipage}{.35\textwidth}
\begin{center}
$m_1, m_\infty \neq 0$,\\
$m_1 \pm m_\infty \neq 0$.
\end{center}
\end{minipage}
\\\hline
\parbox[c][3.0em][c]{0em}{}
${\rm Kummer}$ 
&
\begin{minipage}{.3\textwidth}
\begin{center}
$\dfrac{x^2 + 4 m_\infty x + 4 {m_0}^2}{4x^2}$
\end{center}
\end{minipage}
&
\begin{minipage}{.3\textwidth}
\begin{center}
$m_0\neq 0$,\\
$m_0 \pm m_\infty \neq 0$.
\end{center}
\end{minipage}
\\\hline
\parbox[c][3.0em][c]{0em}{}
${\rm Legendre}$ 
& $\dfrac{m_\infty^2}{x(x-1)}$
&
$m_\infty \neq 0$.
\\\hline
\parbox[c][3.0em][c]{0em}{}
${\rm Bessel}$ 
& $\dfrac{x + 4m_0^2}{4x^2}$
&
$m_0 \neq 0$.
\\\hline
\parbox[c][3.0em][c]{0em}{}
${\rm Whittaker}$ 
& $\dfrac{x + 4m_\infty}{4 x}$
& $m_\infty \neq 0$.
\\\hline
\parbox[c][3.0em][c]{0em}{}
${\rm Weber}$ 
& $\dfrac{1}{4} x^2 - m_\infty$
& $m_\infty \neq 0$.
\\\hline
\parbox[c][3em][c]{0em}{}
${\rm Degenerate~Bessel}$ 
& $\dfrac{1}{4x}$
& --
\\\hline
\parbox[c][2.5em][c]{0em}{}
${\rm Airy}$ 
& $x$
& --
\\\hline
\end{tabular}
\end{center}
\caption{Spectral curves $\Sigma : y^2 - Q(x) = 0$ 
of hypergeometric type.} 
\label{table:classical}
\vspace{-7mm}
\end{table}
Each curve comes in a family parametrized by a collection of complex numbers $\{m_s\}$ associated to each distinct $s=x(p)$ for $p\in\mathcal{P}\setminus\mathcal{R}$. We will use the notation $s_{\pm}$ for the two points in $x^{-1}({s})$, with convention that
\begin{equation}
    \underset{q=s_{\pm}}{\rm Res}\,y(q)dx(q) = \pm m_{s}.
\end{equation}
We call the parameters $\{m_s\}$ the \emph{masses}.

\subsection{Nekrasov-Shatashvili limit}

Since the $\omega_{g,n}$ are polynomials in $\mathscr{Q}$ of degree ${2g}$, counting the powers of $\hbar$ and $\beta$ in the definition of the wavefunction we see that we can take the \emph{Nekrasov-Shatashvili (NS) limit} (\cite{NS1, Nekrasov:2011bc}) as $\hbar\rightarrow 0$, $\beta\rightarrow \infty$ with $\hbar \beta^{\frac12}=\epsilon_1$ fixed:
\begin{equation}
    \varphi^{\rm NS}(z):= \underset{\substack{\hbar\rightarrow 0\\ \beta\rightarrow\infty \\ \hbar\beta^{\frac12} = \epsilon_1}}{\lim} \varphi^{\rm TR}(z),
\end{equation}
to define the \emph{Nekrasov-Shatashvili wavefunction} $\psi^{\rm NS}(x)$. We may again ask for quantum curves which annihilate this wavefunction, and it is easy to check that this commutes with the quantization, so we may simply take the limit to obtain the NS quantum curve (see \cite{BMT} for discussion from the matrix model perspective).

In the physics literature, this limit plays an important role and enjoys a close relation to quantum integrability and other applications. In fact, in light of the relation to topological string theory, one may identify $\epsilon_1=\hbar\beta^{\frac12}$, $\epsilon_2=-\hbar \beta^{-\frac12}$, corresponding to the so-called \emph{$\Omega$-background} parameters of the associated gauge theory (so that the NS limit becomes $\epsilon_2\rightarrow0$, and the self-dual/unrefined limit $\epsilon_1+\epsilon_2\rightarrow0$).  For comparison to such a setting, we give the explicit form of the corresponding quantum curves below in terms of these parameters.

In the Nekrasov-Shatashvili limit, the examples we present below clarify the effect of the new parameters ${\bm \mu}$: in particular, the quantum curves are all of the form
\begin{equation}
\left(\epsilon_1^2\frac{d^2}{dx^2}+\widetilde{Q}(x,\epsilon_1)\right)\psi^{\rm NS}(x)=0,\label{eq:NSform}
\end{equation}
where $\widetilde{Q}(x,\epsilon_1)$ is obtained from $Q(x)$ by replacing masses with a simple $\epsilon_1$ and ${\bm \mu}$-dependent shift, explicitly written below. The parameters ${\bm \nu}$ no longer appear in this limit. In fact, after gauge transformation, the parameters $\nu_p$ for $p\in\widetilde{\mathcal{P}}$ in the self-dual limit \cite{IKT1,IKT2} and the parameters $\mu_p$ in the NS limit below play a similar role.

\subsection{Explicit expressions of quantum curves}
\label{sec:QCHG}

Finally, we present the concrete form of the quantum curves of all the examples of hypergeometric type. We first write down the the explicit formulas and basic properties for the spectral curves in each example in Table \ref{table:classical}, which can then be used to compute the output of the refined topological recursion. When $\beta=1$, we naturally reproduce all quantum curves obtained by Iwaki-Koike-Takei \cite{IKT2} (up to a minor change in convention for the Legendre, Whittaker, and degenerate Bessel curves).

We let $z$ denote an affine coordinate for a chart of $\mathbb{P}^1$.


\subsubsection*{Gauss hypergeometric:}



The functions defining the spectral curve are given by
\begin{align}
    &x(z)= \frac{\sqrt{M } \left(z-{{\alpha_{0,+}}}\right) (z-{\alpha_{0,-}})}{4 z m _{\infty }^2}\\
    &y(z)=\frac{4 z \left(z^2-1\right) m _{\infty }^3}{\sqrt{M } \left(z-\alpha_{0,+}\right) (z-{\alpha_{0,-}}) \left(z-{{\alpha_{1,+}}}\right) (z-{\alpha_{1,-}})}
\end{align}
where 
\begin{equation}
    \alpha_{0_\pm}=-\frac{(m_0\pm m_\infty)^2-m_1^2}{\sqrt{M}}, \qquad \alpha_{1_\pm}=\frac{(m_1\pm m_\infty)^2-m_0^2}{\sqrt{M}}
\end{equation}
and
\begin{equation}
    M = (m_0+m_1+m_\infty)(m_0+m_1-m_\infty)(m_0-m_1+m_\infty)(m_0-m_1-m_\infty).
\end{equation}
We have the involution $\sigma(z)=1/z$. The (effective) ramification points are $\mathcal{R}=\mathcal{R}^*=\{+1,-1\}$, and $\mathcal{P}=\widetilde{\mathcal{P}}=\{0,\infty,\alpha_{0_+},\alpha_{0_-},\alpha_{1_+},\alpha_{1_-}\}$, with $\widetilde{\mathcal{P}}_+=\{0,\alpha_{0_+},\alpha_{1_+}\}$.


 The quantum curve is explicitly given by:
\begin{align}\label{eq:qcgauss}
   &  \Bigg(\epsilon_1^2\dfrac{d^2}{dx^2}-\epsilon_1\epsilon_2 \bigg(\dfrac{1-\nu_{0_+}-\nu_{0_-}}{ x}+\dfrac{1-\nu_{1_+}-\nu_{1_-}}{(x-1)}\bigg) \dfrac{d}{dx} \nonumber\\
    &-\frac{ \left(\epsilon_2\nu_{0_+}-(\epsilon_1\!\!+\!\epsilon_2)\frac{1+\mu _{{0_+}}}{2}\right)\left(\epsilon_2\nu _{0_-}\!-(\epsilon_1\!\!+\!\epsilon_2)\frac{1-\mu _{{0_+}}}{2}\right)}{x^2(x-1)} +\frac{m_0 \left(\epsilon_2(\nu _{0_+}\!\!-\nu_{0_-})-(\epsilon_1\!\!+\!\epsilon_2)\mu _{{0_+}} \right)}{x^2(x-1) }\nonumber\\
    &+\frac{ \left(\epsilon_2\nu_{1_+}-(\epsilon_1\!\!+\!\epsilon_2)\frac{1+\mu _{{1_+}}}{2}\right)\left(\epsilon_2\nu _{1_-}-(\epsilon_1\!\!+\!\epsilon_2)\frac{1-\mu _{{1_+}}}{2}\right)}{x(x-1)^2} -\frac{m_1 \left(\epsilon_2(\nu _{1_+}\!\!-\nu_{1_-})-(\epsilon_1\!\!+\!\epsilon_2)\mu _{{1_+}}\right)}{x(x-1)^2 }\nonumber\\
    &+\frac{\left(\epsilon_2\nu_{\infty_+}+(\epsilon_1\!\!+\!\epsilon_2)\frac{1+\mu _{{\infty_+}}}{2}\right)\left(\epsilon_2\nu_{\infty_-}+(\epsilon_1\!\!+\!\epsilon_2)\frac{1-\mu _{{\infty_+}}}{2}\right)}{x(x-1)} -\frac{m_\infty \left( \epsilon_2(\nu _{\infty_+}\!\!-\nu_{\infty_-})+(\epsilon_1\!\!+\!\epsilon_2)\mu _{{\infty_+}}\right)}{x(x-1) }\nonumber\\
    &\hspace{7.75cm}-\dfrac{m_\infty^2 x^2-(m_\infty^2+m_0^2-m_1^2)x+m_0^2}{x^2(x-1)^2}\Bigg)\psi(x)=0
\end{align}
In the NS limit, this can be written as

\begin{align}\label{eq:qcgaussNS}
    &\Bigg(\epsilon_1^2\dfrac{d^2}{dx^2}
-\dfrac{{\rm m}_\infty^+ {\rm m}_{\infty}^- x^2-({\rm m}_\infty^{+} {\rm m}_\infty^{-}+{\rm m}_0^{+} {\rm m}_0^{-}-{\rm m}_1^{+} {\rm m}_1^{-})x+{\rm m}_0^+ {\rm m}_0^-}{x^2(x-1)^2}\Bigg)\psi^{\rm NS}(x)=0
\end{align}
where ${\rm m}_{0}^{\pm}=m_0-\frac{\epsilon_1\mu_{0_+} }{2}\pm \frac{\epsilon_1}{2}$, ${\rm m}_{1}^{\pm}=m_1-\frac{\epsilon_1\mu_{1_+} }{2}\pm \frac{\epsilon_1}{2}$, and ${\rm m}_{\infty}^{\pm}=m_\infty+\frac{\epsilon_1\mu_{\infty_+} }{2}\mp \frac{\epsilon_1}{2}$. 

\subsubsection*{Degenerate hypergeometric:}

The functions defining the spectral curve are given by
\begin{align}
    x(z)= \frac{m_1^2-m_{\infty }^2}{z^2-m_{\infty }^2},\qquad y(z)=-\frac{z \left(z^2-m_{\infty }^2\right)}{z^2-m_1^2}
\end{align}
with the involution $\sigma(z)=-z$. The (effective) ramification points are $\mathcal{R}=\mathcal{R}^*=\{0,\infty\}$, and we have $\mathcal{P}=\widetilde{\mathcal{P}}=\{m_1,-m_1,m_\infty,-m_\infty\}$, with $\widetilde{\mathcal{P}}_+=\{m_1,-m_\infty\}$.\medskip

\noindent The quantum curve is identical to the Gauss hypergeometric case \eqref{eq:qcgauss} with the parameters $m_0$ and $\nu_{0_\pm}$ set to zero, and $\mu_{0_+}=\pm 1$.

\subsubsection*{Kummer:}

The functions defining the spectral curve are given by
\begin{align}
    x(z)= \sqrt{m_{\infty }^2-m _0^2}\left(z+\frac{1}{z}\right) -2 m_{\infty },\qquad y(z)=\frac{z^2-1}{2 \left(z-{\alpha_+} \right) \left(z-\alpha_-\right)}
\end{align}
where 
\begin{equation}
\alpha_{0_\pm}=\frac{m_\infty \pm m_0}{\sqrt{m_\infty^2-m_0^2}}.
\end{equation}
We have the involution $\sigma(z)=1/z$. The (effective) ramification points are $\mathcal{R}=\mathcal{R}^*=\{+1,-1\}$, and we have ${\mathcal{P}}=\{0,\infty,\alpha_{0_+},\alpha_{0_-}\}$, $\widetilde{\mathcal{P}}=\{\alpha_{0_+},\alpha_{0_-}\}$, with $\widetilde{\mathcal{P}}_+=\{\alpha_{0_+}\}$.

\noindent The quantum curve is explicitly given by:
\begin{align}\label{eq:qckummer}
&\Bigg(  \epsilon_1 ^2 \dfrac{d^2}{dx^2} - \epsilon_1\epsilon_2 \frac{\nu _{\infty _+}\!\!+\nu _{\infty _-}}{  x}\frac{d}{dx}  + \frac{\left(\epsilon_2\nu_{0_+}\!\!-(\epsilon_1\!\!+\!\epsilon_2)\frac{1+\mu _{{0_+}}}{2}\right)\left(\epsilon_2\nu _{0_-}\!\!-(\epsilon_1\!\!+\!\epsilon_2)\frac{1-\mu _{{0_+}}}{2}\right)}{ \mathit{x}^2}\nonumber \\
&  -\left(\frac{\epsilon_2(\nu _{\infty _+}\!\!-\nu _{\infty _-})}{2  \mathit{x}} + \frac{m_0 \left(\epsilon_2(\nu _{0_+}\!\!-\nu_{0_-})-(\epsilon_1\!\!+\!\epsilon_2)\mu _{{0_+}} \right)}{ \mathit{x}^2}\right)-\frac{x^2+4 m_{\infty }x+4 m_0^2}{4  x^2}\Bigg)\psi^{\rm TR}(x)=0
\end{align}
We note that in particular, even though $ydx$ has poles at $0$ and $\infty$, the corresponding parameters $\nu_{\infty_{\pm}}$ are unshifted from the unrefined case since $y$ is regular there.

In the Nekrasov-Shatashvili limit, the quantum curve can be written
\begin{align}
&\Bigg(  \epsilon_1^2 \dfrac{d^2}{dx^2}  -\frac{x^2+4  m_{\infty }x+4 {\rm m}_0^+{\rm m}_0^-}{4 x^2} \Bigg)\psi^{\rm NS}(x)=0.
\end{align}
where ${\rm m}_{0}^{\pm}=m_0-\frac{\epsilon_1\mu_{0_+} }{2}\pm \frac{\epsilon_1}{2}$.

\subsubsection*{Legendre:}





The functions defining the spectral curve are given by
\begin{align}
    x(z)= \frac{1}{2} \left(1+z+\frac{1}{z}\right),\qquad y(z)=\frac{4 \, m_{\infty } z}{z^2-1}
\end{align}
with the involution $\sigma(z)=1/z$. The (effective) ramification points are $\mathcal{R}=\mathcal{R}^*=\{+1,-1\}$, and we have $\widetilde{\mathcal{P}}=\{0,\infty\}$, with $\widetilde{\mathcal{P}}_+=\{0\}$.\medskip

\noindent The quantum curve is identical to the Gauss hypergeometric case\eqref{eq:qcgauss} with parameters $m_0$, $m_1$, $\nu_{0_\pm}$, and $\nu_{1_\pm}$ set to zero, and $\mu_{0_+}=\pm 1$, $\mu_{1_+}=\pm1$.

\subsubsection*{Bessel:}

The functions defining the spectral curve are given by
\begin{align}
    x(z)=4 m_0^2 \left(z^2-1\right),\qquad y(z)= \frac{z}{4 m_0 \left(z^2-1\right)}
\end{align}
with the involution $\sigma(z)=-z$. The (effective) ramification points are $\mathcal{R}=\{0,\infty\}$, $\mathcal{R}^*=\{0\}$ and we have $\mathcal{P}=\widetilde{\mathcal{P}}=\{-1,1\}$ with $\widetilde{\mathcal{P}}_{+}= \{1\}$.\medskip

\noindent The quantum curve is explicitly given by:

\begin{align}\label{eq:qcbessel}
  &\Bigg(\epsilon_1^2 \dfrac{d^2}{dx^2} +\epsilon_1\epsilon_2 \frac{ (\nu _{0_+}\!\!+\nu _{0_-}\!\!-1)}{x}\dfrac{d}{dx}  + \frac{\bigl(\epsilon_2\nu_{0_+}\!\!-(\epsilon_1\!\!+\!\epsilon_2)\frac{1+\mu_{0_+}}{2}\bigr)\bigl(\epsilon_2\nu _{0_-}\!\! -(\epsilon_1\!\!+\!\epsilon_2)\frac{1-\mu_{0_+}}{2}\bigr)}{x^2} \nonumber\\ 
  &\hspace{4.7cm}+m_0\frac{(\epsilon_1\!\!+\!\epsilon_2)\mu_{0_+}\!\!-\epsilon_2(\nu _{0_+}\!\!-\nu _{0_-})}{x^2} -\frac{x+4m_0^2}{4  x^2}\Bigg)\psi^{\rm TR} (x) =0
\end{align}
In the NS limit, this can be written
\begin{align}
  \Bigg(\epsilon_1 ^2 \dfrac{d^2}{dx^2}  -\frac{x+4{\rm m}_{0}^{+}{\rm m}_{0}^{-}}{4  x^2}\Bigg)\psi^{\rm NS} (x) =0.
\end{align}
where ${\rm m}_{0}^{\pm}=m_0-\frac{\epsilon_1\mu_{0_+} }{2}\pm \frac{\epsilon_1}{2}$.



\subsubsection*{Whittaker:}

The functions defining the spectral curve are given by
\begin{align}
x(z)=    2 m_{\infty } \left(\frac{1}{z-1}-\frac{1}{z+1}\right) ,\quad y(z)= \frac{1}{2}z
\end{align}
with the involution $\sigma(z)=-z$. The (effective) ramification points are $\mathcal{R}=\mathcal{R}^*=\{0,\infty\}$, and we have $\mathcal{P}=\{-1,1\}$, $\widetilde{\mathcal{P}}=\widetilde{\mathcal{P}}_+=\varnothing$.\medskip

The quantum curve is identical to the Kummer case \eqref{eq:qckummer} with the parameters $m_0$, $\nu_{0_\pm}$ set to zero, and $\mu_{0_+}=\pm1$.


\subsubsection*{Weber:}


The functions defining the spectral curve are given by
\begin{align}
    x(z)= \sqrt{m_\infty}\left(z+\dfrac{1}{z}\right),\quad y(z)=\dfrac{\sqrt{m_\infty}}{2}\left(z-\dfrac{1}{z}\right)
\end{align}
with the involution $\sigma(z)=1/z$. The (effective) ramification points at $\mathcal{R}=\mathcal{R}^*=\{+1,-1\}$, and we have ${\mathcal{P}}=\widetilde{\mathcal{P}}=\{0,\infty\}$, with $\mathcal{\widetilde{P}}_+=\{\infty\}$. \medskip

\noindent The quantum curve is given explicitly by:
\begin{equation}
\left(
\epsilon_1^2 \dfrac{d^2}{dx^2} + \frac{ \epsilon_2(\nu_{\infty_+}\!\! - \nu_{\infty_-})-(\epsilon_1\!\!+\!\epsilon_2)\mu_{\infty_+}}{2} + \frac{4 m_\infty -x^2}{4  }  \right) \psi^{\rm TR} (x) =0
\end{equation}
In the NS limit, this can be written
\begin{equation}
\left(
\epsilon_1^2 \dfrac{d^2}{dx^2} + \frac{2 ({\rm m}_\infty^++{\rm m}_\infty^-) -x^2}{4  }  \right) \psi^{\rm NS} (x) =0.
\end{equation}
where ${\rm m}_{\infty}^{\pm}=m_\infty-\frac{\epsilon_1\mu_{\infty_+} }{2}\pm \frac{\epsilon_1}{2}$.

\subsubsection*{Degenerate Bessel:}
The functions defining the spectral curve are given by
\begin{align}
    x(z)=z^2,\quad y(z)=\frac{1}{2z}
\end{align}
with the involution $\sigma(z)=-z$. The (effective) ramification points are $\mathcal{R}=\{0,\infty\}$, $\mathcal{R}^*=\{0\}$, and we have $\mathcal{P}=\widetilde{\mathcal{P}}=\widetilde{\mathcal{P}}_+=\varnothing$. \medskip

\noindent The quantum curve is identical to the Bessel case \eqref{eq:qcbessel} with the parameters $m_0$ $\nu_{0_{\pm}}$ set to zero, and $\mu_{0_+}=\pm1$.


\subsubsection*{Airy:}
The functions defining the spectral curve are given by
\begin{align}
    x(z)=z^2,\quad y(z)=z
\end{align}
with the involution $\sigma(z)=-z$. The (effective) ramification points are $\mathcal{R}=\{0,\infty\}$, $\mathcal{R}^*=\{0\}$ and we have $\mathcal{P}=\widetilde{\mathcal{P}}=\widetilde{\mathcal{P}}_+=\varnothing$.\medskip

\noindent The quantum curve is explicitly given by:
\begin{equation}
    \left(\epsilon_1^2\frac{d}{dx^2}-x\right)\psi^{\rm TR}(x)=0.
\end{equation}

\appendix
\section{Proofs of lemmas}\label{sec:proofs}
In this appendix, we give the proofs of several lemmas appearing in Section~\ref{sec:properties}.

\begin{lemma}[Lemma \ref{lem:pole1/2,1}]
All poles of
$\omega_{\frac12,1}$ are simple and lie in $\mathcal{R}\cup\widetilde{\mathcal{P}}$. Explicitly,
let ${\rm ord}_p(y)$ be the order of the zero of $y$ at $p\in\widetilde{\mathcal{P}}$ and $-{\rm ord}_p(y)$ be the order of the pole, then $\omega_{\frac12,1}$ is given by
\begin{equation}
    \omega_{\frac12,1}(p_0)=\frac{\mathscr{Q}}{2}\left(-\frac{dy(p_0)}{y(p_0)}+\sum_{p\in \widetilde{\mathcal{P}}_+}\,\mu_p\,{\rm ord}_p(y)\,\eta_p(p_0)\right),\label{w1/2,1-2A}
\end{equation}
where $\eta_p$ is the unique meromorphic differential on $\mathbb{P}^1$ with residue $+1$ at $p$ and $-1$ at $\sigma(p)$.
\end{lemma}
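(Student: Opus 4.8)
The plan is to evaluate the three groups of residues in \eqref{eq:half1def} directly, after first simplifying the integrand. Since $\Sigma$ is globally of degree two, $y(\sigma(p))=-y(p)$, so $\Delta y(p)=2y(p)$ and the kernel reduces to ${\rm K}(p_0,p)=\frac{1}{4y(p)\,dx(p)}\int_{\sigma(p)}^{p}B(p_0,\cdot)$. Using the canonical form of $B$ one checks that $\int_{\sigma(p)}^{p}B(p_0,\cdot)=\eta_p(p_0)$, the differential with residue $+1$ at $p$ and $-1$ at $\sigma(p)$. Hence the whole integrand collapses to
\begin{equation}
{\rm K}(p_0,p)\cdot dx(p)\cdot dy(p)=\tfrac14\,\frac{dy(p)}{y(p)}\,\eta_p(p_0),
\end{equation}
a meromorphic one-form in $p$ whose only poles are simple: at the zeroes and poles of $y$ (which is precisely $\mathcal{R}\cup\widetilde{\mathcal{P}}$), where $\frac{dy}{y}$ has residue $\mathrm{ord}_p(y)$, and at $p=p_0,\sigma(p_0)$, coming from $\eta_p(p_0)$.

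First I would dispose of the ramification points. At $r\in\mathcal{R}$ one has $\sigma(r)=r$, so in a local coordinate linearizing $\sigma$ to $t\mapsto -t$ the two endpoints of $\int_{\sigma(p)}^{p}B$ collide and $\eta_p(p_0)=O(t)$ vanishes to first order as $p\to r$. This cancels the simple pole of $\frac{dy}{y}$, so every residue in $\sum_{r\in\mathcal{R}}$ vanishes and that entire block drops out. Next, at each $r\in\widetilde{\mathcal{P}}_+$ the factor $\eta_p(p_0)$ is holomorphic in $p$ (since $r\neq p_0,\sigma(p_0)$), so the residue is $\mathrm{ord}_r(y)\,\eta_r(p_0)$ up to the universal $\tfrac14$ from the kernel; weighting by $\mu_r$ and summing produces exactly the $\eta_p$-sum in \eqref{w1/2,1-2A}.

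The remaining piece is the residue at $p=\sigma(p_0)$, where $\eta_p(p_0)$ has its pole while $\frac{dy}{y}$ is regular. Here I would change variables $p\mapsto\sigma(p)$ and use that $\frac{dy}{y}$ is $\sigma$-invariant (because $d\log(y\circ\sigma)=d\log(-y)=d\log y$) together with the chain rule $(y\circ\sigma)'=-y'$; the residue then evaluates to a multiple of $\frac{dy(p_0)}{y(p_0)}$, producing the first term of \eqref{w1/2,1-2A}. Assembling the three contributions and multiplying by $-2\mathscr{Q}$ gives the stated closed form, and since the right-hand side is a fixed combination of $\frac{dy}{y}$ and the $\eta_p$ — all of which have only simple poles supported on $\mathcal{R}\cup\widetilde{\mathcal{P}}$ — the pole-structure claim follows immediately.

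I expect the main obstacle to be the sign and orientation bookkeeping through the global involution: verifying the precise vanishing order of $\eta_p$ at ramification points and pinning down the constant in the $p=\sigma(p_0)$ residue via $y\circ\sigma=-y$ and $\eta_{\sigma(p)}=-\eta_p$. A clean way to fix all constants is to invoke the residue theorem for the one-form $\tfrac14\frac{dy}{y}\,\eta_p(p_0)$ in $p$: the sum of all its residues vanishes, which forces $\mathrm{Res}_{p=p_0}+\mathrm{Res}_{p=\sigma(p_0)}=0$ and makes the residues over $\widetilde{\mathcal{P}}$ pair off under $\sigma$, providing an independent check on the $p=\sigma(p_0)$ term and on the overall normalization.
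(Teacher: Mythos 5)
Your proposal is correct, and its core is the same as the paper's: collapse the integrand of \eqref{eq:half1def} to $\tfrac14\frac{dy(p)}{y(p)}\,\eta_p(p_0)$ using $\Delta y=2y$ and $\int_{\sigma(p)}^{p}B(p_0,\cdot)=\eta_p(p_0)$, then do residue bookkeeping exploiting the $\sigma$-anti-invariance of this one-form. Where you differ is in how the residues are evaluated. The paper (see \eqref{w1/2,1-0}--\eqref{w1/2,1-1}) applies the global residue theorem once, trading the residues at $\mathcal{R}\cup\{\sigma(p_0)\}$ for those at $\{p_0\}\cup\widetilde{\mathcal{P}}_-$, and therefore never needs to know what happens at individual ramification points. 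You instead evaluate all three blocks in place, which requires your (correct) extra observation that $\eta_p(p_0)$ vanishes to first order at each $r\in\mathcal{R}$, because $\sigma$ fixes $r$ with $\sigma'(r)=-1$, killing the simple pole of $dy/y$ there. Your route is slightly more elementary and yields the stronger statement that each $\mathcal{R}$-residue vanishes individually; the paper's route is shorter and sidesteps any local analysis at $\mathcal{R}$.

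Two cautions. First, in your closing paragraph you attribute the cancellations $\mathrm{Res}_{p=p_0}+\mathrm{Res}_{p=\sigma(p_0)}=0$ and the pairing over $\widetilde{\mathcal{P}}$ to the residue theorem; these are in fact consequences of the anti-invariance $\sigma^*\bigl(\tfrac{dy}{y}\eta_p(p_0)\bigr)=-\tfrac{dy}{y}\eta_p(p_0)$, while the residue theorem supplies only the single global relation (which, combined with anti-invariance, is what forces $\sum_{r\in\mathcal{R}}\mathrm{Res}_r=0$). You have all the needed ingredients, but the logic should be run in that order. Second, you should pin down the sign of the $\widetilde{\mathcal{P}}_+$ block rather than asserting it matches \eqref{w1/2,1-2A}: the in-place evaluation gives $-2\mathscr{Q}\cdot\tfrac14\,\mu_r\,\mathrm{ord}_r(y)\,\eta_r(p_0)$ per point, so the $\mu$-sum appears with the opposite sign to the display in \eqref{w1/2,1-2A}. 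The same sign emerges from the paper's own intermediate formula \eqref{w1/2,1-1} once the convention $\mu_{\sigma(p)}=-\mu_p$ is applied, so the mismatch amounts to the relabelling $\mu_p\to-\mu_p$ of the free parameters and is harmless; still, a complete write-up should either track this sign carefully or remark explicitly that it is conventional.
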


\begin{proof}
\hfill

\noindent One can see from the definition of the recursion kernel \eqref{kernel} that the definition of $\omega_{\frac12,1}$ \eqref{eq:half1def} is simplified as
\begin{equation}
  \omega_{\frac12,1}(p_0)=-\frac{\mathscr{Q}}{2}\left(\sum_{r\in\mathcal{R}}\underset{p=r}{\text{Res}}+\underset{p=\sigma(p_0)}{\text{Res}}+\sum_{r\in\widetilde{\mathcal{P}}_+}\mu_r\cdot\underset{p=r}{ \text{Res}}\,\right)\frac{dy(p)}{y(p)}\int_{\sigma(p)}^p\omega_{0,2}(p_0,\cdot),\label{w1/2,1-0}
\end{equation}
where we used $y(\sigma(p))=-y(p)$. Therefore, the integrand can only have poles with respect to $p$ at $p=p_0$ and $p=\sigma(p_0)$ or at the zeroes and poles of $y$ (that is, $\mathcal{R}$ and $\widetilde{\mathcal{P}}$).  Since the sum of all residues on $\Sigma$ vanishes,
\begin{equation}
    \omega_{\frac12,1}(p_0)=\frac{\mathscr{Q}}{2}\left(-\frac{dy(p_0)}{y(p_0)}-\sum_{r\in \widetilde{\mathcal{P}}_-}\mu_r\cdot\underset{p=r}{\text{Res}}\,\frac{dy(p)}{y(p)}\int^p_{\sigma(p)}\omega_{0,2}(p_0,\cdot)\right).\label{w1/2,1-1}
\end{equation}
Using the standard fact that 
\begin{equation}
\int_{\sigma(p)}^p\omega_{0,2}(p_0,\cdot)=\eta_p(p_0)
\end{equation}
we evaluate the remaining residue, and using the anti-invariance under the involution of the second term, we arrive at \eqref{w1/2,1-2A}. 
The pole structure immediately follows.
\end{proof}

\begin{lemma}[Lemma \ref{lem:poles}]
For $2g+n\geq2$, all poles of $\omega_{g,n+1}(p_0,J)$, with respect to $p_0$, lie in $\mathcal{R}^*\cup\sigma(J)$.
\end{lemma}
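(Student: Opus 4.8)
The plan is to prove the claim by induction on $m=2g+n$, the key output being an explicit closed form for $\omega_{g,n+1}(p_0,J)$ from which the $p_0$-poles can be read off directly. For the base cases $m=2$, the differentials $\omega_{0,3}$ and $\omega_{1,1}$ are unrefined by Remark \ref{rem:0n}, so their poles lie in $\mathcal{R}^*$ by TR\ref{TR2}, while the pole structure of $\omega_{\frac12,2}$ follows from the explicit data for $\omega_{\frac12,1}$ in Lemma \ref{lem:pole1/2,1}. For the inductive step the crucial observation is that the $p_0$-dependence of the integrand in \eqref{eq:RTR} enters only through the recursion kernel ${\rm K}(p_0,p)$ of \eqref{kernel}, whose numerator $\int_{\sigma(p)}^p B(p_0,\cdot)$, regarded as a one-form in $p$, has simple poles precisely at $p=p_0$ and $p=\sigma(p_0)$.

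First I would catalogue the poles in $p$ of the integrand ${\rm K}(p_0,p)\cdot\text{Rec}^{\mathscr{Q}}_{g,n+1}(p,J)$. The poles lying in the summation set $\mathcal{R}\cup\sigma(J_0)\cup\widetilde{\mathcal{P}}_+$ come from the kernel denominator $\Delta y\cdot dx$, from the second summand of each $\Delta\omega_{0,2}(p,p_i)$ (singular at $\sigma(p)=p_i$), and --- by the inductive hypothesis together with Lemma \ref{lem:pole1/2,1} --- from the lower multidifferentials in $\text{Rec}^{\mathscr{Q}}$. The only complementary poles are then at $p=p_0$ (from the kernel numerator) and at $p=p_i$, $i=1,\dots,n$ (from the first summand of $\Delta\omega_{0,2}(p,p_i)$). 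Here one must verify that no complementary pole sits at $\widetilde{\mathcal{P}}_-$: although $\omega_{\frac12,1}$ has simple poles there by Lemma \ref{lem:pole1/2,1}, the kernel vanishes at every point of $\widetilde{\mathcal{P}}$ because $\Delta y\cdot dx$ has a pole, so the product is regular; the $\mathscr{Q}$-derivative term likewise produces no new complementary poles. Since the sum of all residues of a meromorphic one-form on $\mathbb{P}^1$ vanishes, I can replace the summation-set residues by minus the complementary ones and evaluate the latter explicitly, obtaining
\begin{equation}\label{pole2}
    \omega_{g,n+1}(p_0,J)=-\frac{\text{Rec}^{\mathscr{Q}}_{g,n+1}(p_0,J)}{2\omega_{0,1}(p_0)} +2\sum_{i=1}^n d_{p_i}\left({\rm K}(p_0,p_i)\cdot\omega_{g,n}(J)\right),
\end{equation}
where the residue at $p=p_0$ gives the first term and the residues at $p=p_i$ give the total $p_i$-derivatives.

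Finally I would read the $p_0$-poles off \eqref{pole2}. The factor $1/\omega_{0,1}(p_0)$ contributes poles only at the ramification points, and at the ineffective ones the accompanying high-order zero (cf. the Remark following the statement) removes them, leaving $\mathcal{R}^*$; the remaining poles of $\text{Rec}^{\mathscr{Q}}(p_0,J)$ and of the derivative terms lie, by the inductive hypothesis and the structure of $\Delta\omega_{0,2}(p_0,p_i)$, in $\mathcal{R}^*\cup\{p_i,\sigma(p_i)\}$. The step I expect to be the main obstacle is showing that the apparent poles at $p_0=p_i$ cancel between the two pieces of \eqref{pole2}: the pole of $\text{Rec}^{\mathscr{Q}}(p_0,J)/\omega_{0,1}(p_0)$ at $p_0=p_i$ coming from $\Delta\omega_{0,2}(p_0,p_i)$ must be matched exactly against the $p_0=p_i$ pole of $2\,d_{p_i}\!\left({\rm K}(p_0,p_i)\cdot\omega_{g,n}(J)\right)$, so that only the poles at $\mathcal{R}^*\cup\sigma(J)$ survive. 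Executing the two explicit residue evaluations (tracking the $\mathscr{Q}$-derivative contribution in particular) and verifying this cancellation is the bulk of the work.
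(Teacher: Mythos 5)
Your strategy coincides with the paper's own proof of this lemma: induction on $2g+n$, trading the recursion contour for the complementary residues at $p_0$ and $J$ via the vanishing of the total sum of residues on $\mathbb{P}^1$, arriving at exactly the paper's closed formula
\begin{equation*}
\omega_{g,n+1}(p_0,J)=-\frac{{\rm Rec}^{\mathscr{Q}}_{g,n+1}(p_0,J)}{2\omega_{0,1}(p_0)}+2\sum_{i=1}^n d_{p_i}\bigl({\rm K}(p_0,p_i)\cdot\omega_{g,n}(J)\bigr),
\end{equation*}
and then checking in local coordinates that the apparent poles at $p_0=p_i$ cancel between the two pieces. For $2g+n\geq 3$ this is sound, and your observation that $\omega_{\frac12,1}$ enters the integrand only linearly there, so that its simple poles at $\widetilde{\mathcal{P}}$ are killed by the zero of the kernel coming from the pole of $\Delta y\cdot dx$, is precisely the paper's argument for the inductive step.

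The genuine gap is in the base case, specifically $(g,n)=(1,0)$. You assert that $\omega_{1,1}$ is unrefined by Remark \ref{rem:0n}, so that TR\ref{TR2} settles its pole structure. That is false: Remark \ref{rem:0n} only identifies $\omega_{g,n+1}\big|_{\mathscr{Q}=0}$ with the unrefined differential, and $\omega_{1,1}$ carries nontrivial $\mathscr{Q}^2$ corrections, since ${\rm Rec}^{\mathscr{Q}}_{1,1}$ contains $\omega_{\frac12,1}\cdot\omega_{\frac12,1}$ and $\mathscr{Q}\,dx\cdot d_p\bigl(\omega_{\frac12,1}/dx\bigr)$ (only the genus-zero $\omega_{0,n}$ are $\mathscr{Q}$-independent). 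Worse, this is exactly the case where your pole-cataloguing mechanism breaks down: because $\omega_{\frac12,1}$ appears \emph{quadratically}, its double poles at $\widetilde{\mathcal{P}}$ are not cancelled by the simple zero of the kernel at simple poles of $ydx$, so the integrand genuinely has additional poles at $\mathcal{P}^{(1)}\cap\widetilde{\mathcal{P}}$ (where $\mathcal{P}^{(1)}$ denotes the set of simple poles of $ydx$), and the closed formula for $\omega_{1,1}$ acquires extra residue terms over $\mathcal{P}^{(1)}\cap\widetilde{\mathcal{P}}_-$. The paper must then show, by an explicit local expansion, that the would-be simple pole of $-{\rm Rec}^{\mathscr{Q}}_{1,1}(p_0)/2\omega_{0,1}(p_0)$ at $p_\pm\in\mathcal{P}^{(1)}\cap\widetilde{\mathcal{P}}_\pm$, whose residue is proportional to $\mathscr{Q}^2\,{\rm ord}_{p_\pm}(y)(\pm\mu_{p_\pm}-1)\bigl({\rm ord}_{p_\pm}(y)(\pm\mu_{p_\pm}-1)-2\bigr)$, cancels exactly against those extra terms, so that $\omega_{1,1}$ is in fact regular on $\widetilde{\mathcal{P}}$. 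This $\mu$-dependent cancellation is the ``considerably more involved'' part of the paper's proof; your proposal skips it entirely, and without it the induction has no valid starting point at $2g+n=2$.
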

\begin{proof}
\hfill

\noindent We proceed by induction on $2g+n$. For $2g+n>2$, we will directly evaluate the recursion formula \eqref{eq:RTR}. It turns out that the first case $2g+n=2$ is considerably more involved.

 When $(g,n)=(0,2)$, Remark~\ref{rem:0n} implies that the pole structure of $\omega_{0,3}$ obeys TR\ref{TR2}, as desired. For $(g,n)=(1,0)$, we recall that $\text{Rec}_{1,1}^{\mathscr Q}(p_0)$ is given by
\begin{align}
   \text{Rec}^\mathscr{Q}_{1,1}(p_0)=&\;\;\omega_{\frac12,1}(p_0)\cdot\omega_{\frac12,1}(p_0)-\omega_{0,2}(p_0,\sigma(p_0))+\mathscr{Q}\, dx(p_0)\cdot d_{p_0}\left(\frac{\omega_{\frac12,1}(p_0)}{dx(p_0)}\right).
\end{align}
The pole structure of $\omega_{\frac{1}{2},1}$ (Lemma~\ref{lem:pole1/2,1}) shows that poles of $\text{Rec}_{1,1}(p_0)$ lying in $\widetilde{\mathcal{P}}$ can be at worst of second order. Let ${\mathcal{P}}^{(1)}\subset{\mathcal{P}}$ denote the set of simple poles of $ydx$. Then since the kernel has $\Delta ydx$ in the denominator, ${\rm K}(p_0,p)\text{Rec}^\mathscr{Q}_{1,1}(p)$ does not have poles (in $p$) at $\widetilde{\mathcal{P}}$ unless $p\in\mathcal{P}^{(1)}$. As a result, the recursion formula \eqref{eq:RTR} gives:
\begin{equation}
    \omega_{1,1}(p_0)=-\frac{\text{Rec}_{1,1}^Q(p_0)}{2\omega_{0,1}(p_0)}+2\!\!\!\!\sum_{q\in\mathcal{P}^{(1)} \cap\, \widetilde{\mathcal{P}}_-}\!\!\!\!\underset{p=q}{\text{Res}}\,{\rm K}(p_0,p)\left(\left(\omega_{\frac12,1}(p)^2\right)+\mathscr{Q}dx(p)\cdot d_{p}\frac{\omega_{\frac12,1}(p)}{dx(p)}\right),\label{w1,1-1}
\end{equation}
where we used again that the sum of the residues is zero. From the formula \eqref{w1,1-1}, it follows that the only poles of $\omega_{1,1}$ must lie in $\mathcal{R}^*$ or $\widetilde{\mathcal{P}}$.

The remaining task is to show that $\omega_{1,1}$ has no poles at $\widetilde{\mathcal{P}}$. Let us focus on a pair $p_{\pm}\in\mathcal{P}^{(1)} \cap\, \widetilde{\mathcal{P}}_{\pm}$ where $\sigma(p_-)=p_+$.
Plugging the explicit form of $\omega_{0,1}$ and $\omega_{\frac12,1}$ into \eqref{w1,1-1} and expanding in local coordinates near $p_{\pm}$ allows us to check that, term by term, any pole appearing at $\widetilde{\mathcal{P}}$ can be at worst simple. We check that it in fact has no residue: the first term gives
\begin{align}
    \underset{p=p_{\pm}}{\text{Res}}&\left(-\frac{\text{Rec}^{\mathscr{Q}}_{1,1}(p)}{2\omega_{0,1}(p)}\right)=-\frac{\mathscr{Q}^2}{8 \underset{{q=p_{\pm}}}{\text{Res}}\omega_{0,1}(q)}\left(\mu_{p_{\pm}}^2-1\right).
\end{align}
and a direct computation of the second term cancels out exactly with this, so that $\omega_{1,1}$ has no poles at $\widetilde{\mathcal{P}}$.

For $(g,n)=(\frac12,1)$, the pole structure of $\omega_{\frac12,1}$ ensures that the integrand ${\rm K}(p_0,p)\text{Rec}^{\mathscr{Q}}_{\frac12,2}(p,p_1)$ can have poles in $p$ only at $\mathcal{R}^*$, $J_0$, and $\sigma(J_0)$ (but not at $\widetilde{\mathcal{P}}$), so the recursion formula \eqref{eq:RTR} gives:
\begin{equation}
   \omega_{\frac12,2}(p_0,p_1)=-\frac{\text{Rec}^{\mathscr Q}_{\frac12,2}(p_0,p_1)}{2\omega_{0,1}(p_0)} +2d_{p_1}\left({\rm K}(p_0,p_1)\cdot \omega_{\frac12,1}(p_1)\right),\label{P1/2,20}
\end{equation}
where we again used the fact that the sum of residues is zero. 

The pole structure of  $\omega_{\frac12,1}$ implies, together with \eqref{P1/2,20}, that the only poles of $\omega_{\frac12,2}$ lie in $\mathcal{R}^*$, $J_0$, or $\sigma(J_0)$, hence we only have to show that it is regular (in $p_0$) at $p_1$. This can be shown by a direct expansion of \eqref{P1/2,20} in local coordinates around $p_1$. In particular, the poles coming from $2\omega_{0,2}$ in ${\rm Rec}^{\mathscr{Q}}_{\frac12,2}$ cancel out with the  poles coming from the second term in \ref{P1/2,20}.


We now assume Lemma~\ref{lem:poles} up to $m=2g+n\geq2$. Then it implies that for any $g,n$ with $2g+n=m+1$, the integrand in the recursion formula \eqref{eq:RTR} has no poles at $\widetilde{\mathcal{P}}$ because $\omega_{\frac12,1}$ only appears linearly and $ydx$ has a pole there. We can equivalently take residues at $J_0$ (since the sum of residues vanishes) in the recursion \eqref{eq:RTR}. Then the only possible contributions of residues at $p=p_i\in J$ are from $\omega_{0,2}(p,p_i)$. Thus, it follows:
\begin{equation}
    \omega_{g,n+1}(p_0,J)=-\frac{\text{Rec}^{\mathscr{Q}}_{g,n+1}(p_0,J)}{2\omega_{0,1}(p_0)} +2\sum_{i=1}^n d_{p_i}\left({\rm K}(p_0,p_i)\cdot\omega_{g,n}(J)\right).\label{pole2}
\end{equation}
It is clear that there is no pole at $\widetilde{\mathcal{P}}$. A very similar calculation in local coordinates around $p_i \in J$ to the case of $\omega_{\frac12,2}$ shows that the poles in $p_0$ from the first term cancel with the sum term. 

Thus, the only possible poles of $\omega_{g,n+1}(p_0,J)$, with respect to $p_0$, are at $\mathcal{R}^*$ and at $\sigma(J)$. 
\end{proof}

\begin{lemma}[Lemma \ref{lem:GLP}]Define a quadratic (in $p$) differential $P_{g,n+1}(p,J)$ for $2g+n\geq2$ by
\begin{equation}
    P_{g,n+1}(p,J):=2\omega_{0,1}(p)\omega_{g,n+1}(p,J)+{\rm Rec}_{g,n+1}^\mathscr{Q}(p,J).
\end{equation}
Then:
\begin{enumerate}[label=\roman*)]
    \item $P_{g,n+1}$ is invariant under the involution $\sigma$,
    \item $P_{g,n+1}$ is holomorphic at every effective ramification point $r\in\mathcal{R}^*$.
\end{enumerate}
\end{lemma}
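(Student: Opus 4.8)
The plan is to avoid a term-by-term analysis of $P_{g,n+1}$ and instead reuse the rewriting of the recursion already obtained in the proof of Lemma~\ref{lem:poles}. For $n\geq 1$ the identity \eqref{pole2} reads
\begin{equation*}
\omega_{g,n+1}(p,J)=-\frac{{\rm Rec}^{\mathscr{Q}}_{g,n+1}(p,J)}{2\omega_{0,1}(p)}+2\sum_{i=1}^{n}d_{p_i}\bigl({\rm K}(p,p_i)\cdot\omega_{g,n}(J)\bigr),
\end{equation*}
so that multiplying through by $2\omega_{0,1}(p)$ and adding ${\rm Rec}^{\mathscr{Q}}_{g,n+1}$ collapses the definition of $P_{g,n+1}$ into the factored form
\begin{equation*}
P_{g,n+1}(p,J)=4\,\omega_{0,1}(p)\sum_{i=1}^{n}d_{p_i}\bigl({\rm K}(p,p_i)\cdot\omega_{g,n}(J)\bigr).
\end{equation*}
The virtue of this formula is that the entire $p$-dependence is carried by the two factors $\omega_{0,1}(p)$ and ${\rm K}(p,p_i)$ (the kernel now appearing with $p$ in its \emph{first} slot), while $d_{p_i}$ and $\omega_{g,n}(J)$ are inert in $p$. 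Both assertions then reduce to elementary properties of these two objects.

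The two inputs I will use are as follows. First, $\omega_{0,1}=y\,dx$ is anti-invariant, $\sigma^{*}\omega_{0,1}=-\omega_{0,1}$ (as $y\circ\sigma=-y$ and $x\circ\sigma=x$), and is regular at each $r\in\mathcal{R}^{*}$ (effectivity means exactly that $\omega_{0,1}=\tfrac12\Delta y\,dx$ is nonsingular there). Second, the kernel is anti-invariant in its first argument and regular there at ramification points: explicitly ${\rm K}(\sigma(p),q)=-{\rm K}(p,q)$ and ${\rm K}(\cdot,q)$ is holomorphic in a neighbourhood of $\mathcal{R}^{*}$. The regularity is clear since the only $p$-dependence of ${\rm K}(p,q)$ sits in $B(p,\cdot)$ inside $\int_{\sigma(q)}^{q}B(p,\cdot)$, whose poles in $p$ lie only at the endpoints $q,\sigma(q)\notin\mathcal{R}^{*}$; the anti-invariance follows from \eqref{w02base}, which gives $B(\sigma(p),\cdot)=\tfrac{dx(p)\,dx(\cdot)}{(x(p)-x(\cdot))^{2}}-B(p,\cdot)$, the symmetric term integrating to zero from $\sigma(q)$ to $q$ because its primitive $\tfrac{dx(p)}{x(p)-x(\cdot)}$ agrees at $q$ and $\sigma(q)$. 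Granting these, in the factored formula the two sign flips cancel to give $\sigma$-invariance (i), while the product of something regular at $\mathcal{R}^{*}$ with something regular there is holomorphic, giving (ii); this settles all $n\geq1$.

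It remains to handle $n=0$, i.e. $\omega_{g,1}$ with $g\geq1$. For $g\geq2$ I expect $P_{g,1}\equiv0$: every summand of ${\rm Rec}^{\mathscr{Q}}_{g,1}$ carries at most one factor of $\omega_{\frac12,1}$ (all other inputs being regular on $\widetilde{\mathcal{P}}$ by Lemma~\ref{lem:poles}), so ${\rm K}(p_0,p){\rm Rec}^{\mathscr{Q}}_{g,1}(p)$ has no pole on $\widetilde{\mathcal{P}}$, its only poles in $p$ being at $\mathcal{R}^{*},p_0,\sigma(p_0)$; the vanishing of the total residue then collapses the recursion onto the residue at $p=p_0$, yielding $\omega_{g,1}=-{\rm Rec}^{\mathscr{Q}}_{g,1}/2\omega_{0,1}$ and hence $P_{g,1}=0$. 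The exceptional case is $\omega_{1,1}$, where $\omega_{\frac12,1}^{2}$ produces genuine \emph{double} poles of ${\rm Rec}^{\mathscr{Q}}_{1,1}$ on $\widetilde{\mathcal{P}}$ (Lemma~\ref{lem:pole1/2,1}) so the corresponding residues survive; feeding the explicit expression \eqref{w1,1-1} from the proof of Lemma~\ref{lem:poles} into the definition of $P$ gives
\begin{equation*}
P_{1,1}(p_0)=4\,\omega_{0,1}(p_0)\!\!\sum_{q\in\mathcal{P}^{(1)}\cap\widetilde{\mathcal{P}}_-}\!\!\underset{p=q}{{\rm Res}}\,{\rm K}(p_0,p)\Bigl(\omega_{\frac12,1}(p)^{2}+\mathscr{Q}\,dx(p)\,d_{p}\bigl(\omega_{\frac12,1}(p)/dx(p)\bigr)\Bigr),
\end{equation*}
which is once more of the shape $2\,\omega_{0,1}(p_0)\cdot W(p_0)$ with $W$ assembled from kernels ${\rm K}(p_0,q)$ anchored at the \emph{non}-ramification points $q\in\widetilde{\mathcal{P}}_-$, so the two inputs above apply verbatim.

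I expect the main obstacle to be exactly this $\omega_{1,1}$ computation, where the clean cancellation available for $g\geq2$ breaks down and one must track the residues on $\widetilde{\mathcal{P}}$ by hand -- precisely the delicate bookkeeping already carried out for Lemma~\ref{lem:poles}. A second point worth stressing is that $\sigma$-invariance is not at all apparent from ${\rm Rec}^{\mathscr{Q}}$ itself, since $\omega_{\frac12,1}$ carries no definite parity under $\sigma$ (by Lemma~\ref{lem:pole1/2,1} its pullback is $\tfrac{\mathscr{Q}}{2}(-dy/y-\sum_{p}\mu_p\,{\rm ord}_p(y)\,\eta_p)$, neither $+\omega_{\frac12,1}$ nor $-\omega_{\frac12,1}$); the symmetry only becomes manifest after passing to the factored form $2\omega_{0,1}\cdot W$.
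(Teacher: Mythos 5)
Your overall strategy is the same as the paper's: recast $P_{g,n+1}$ in the factored form $\omega_{0,1}(p)\times(\text{kernels anchored at points away from }\mathcal{R}^*)$ (the paper's \eqref{P03}, \eqref{P1,1-1}, \eqref{Pg,n+1-1}) and then deduce (i) from the anti-invariance of $\omega_{0,1}$ and of ${\rm K}(\cdot,q)$ in its first slot, and (ii) from the regularity of both factors at $\mathcal{R}^*$. Your treatment of $\omega_{1,1}$, of $\omega_{\frac12,2}$ (where \eqref{P1/2,20} indeed has the shape of \eqref{pole2}), and of the $n=0$ cases with $g\geq\tfrac32$ (where $P_{g,1}=0$; note your ``$g\geq2$'' should read ``$g\geq\tfrac32$'', the argument being identical) all agree with the paper, and your explicit verification of the parity and regularity of the kernel is a detail the paper only asserts.

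However, there is one concrete error: you apply \eqref{pole2} to all $n\geq1$, including $(g,n)=(0,2)$. That identity is established in the paper (inductively) only for $2g+n\geq3$, and it genuinely fails for $\omega_{0,3}$, because ${\rm Rec}^{\mathscr{Q}}_{0,3}=\tfrac12\Delta\omega_{0,2}(p,p_1)\cdot\Delta\omega_{0,2}(p,p_2)$ in \eqref{Rec0,3} is not the $(g,n)=(0,2)$ specialization of the universal integrand \eqref{RecQ} --- this is precisely the ``non-universal behaviour of $\omega_{0,2}$'' the paper flags. Concretely, the residue of ${\rm K}(p_0,p)\,{\rm Rec}^{\mathscr{Q}}_{0,3}$ at $p=p_i$ produces $\Delta\omega_{0,2}(p_i,p_j)$ rather than $2\,\omega_{0,2}(p_1,p_2)$; by \eqref{w02base} these differ by the $\sigma$-invariant part $dx(p_i)dx(p_j)/(x(p_i)-x(p_j))^2$ of $B$, whose $d_{p_i}$-derivative against the kernel does not vanish, so your claimed expression for $P_{0,3}$ is not equal to the actual one. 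The correct statement is the paper's \eqref{P03}, $P_{0,3}=\omega_{0,1}(p_0)\sum_{i\neq j}d_{p_i}\bigl({\rm K}(p_0,p_i)\cdot\Delta\omega_{0,2}(p_i,p_j)\bigr)$. The damage is limited: the corrected expression still has your factored shape, with $\Delta\omega_{0,2}(p_i,p_j)$ inert in $p_0$, so conclusions (i) and (ii) for $P_{0,3}$ follow from exactly the same two observations; but as written your derivation asserts a false identity in this one base case, which must be replaced by the separate computation from ${\rm Rec}^{\mathscr{Q}}_{0,3}$.
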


\begin{proof}
\hfill

\noindent The integrand in the recursion \eqref{eq:RTR} for $\omega_{0,3}(p_0,p_1,p_2)$ can have poles at $\mathcal{R}$, $J_0$, and $\sigma(J_0)$. Since sum of all residues vanishes, we can write it as
 
\begin{align}\omega_{0,3}(p_0,p_1,p_2)=\,&2\sum_{r\in J_0}\underset{p=r}{\text{Res}}\,{\rm K}(p_0,p)\!\cdot\text{Rec}_{0,3}^{\mathscr{Q}}(p,J),\\
   =&-\frac{\text{Rec}^{\mathscr{Q}}_{0,3}(p_0,p_1,p_2)}{2\omega_{0,1}(p_0)} +\sum_{\substack{i,j=1\\i\neq j}}^2 d_{p_i}\left({\rm K}(p_0,p_i)\cdot\Delta\omega_{0,2}(p_i,p_j)\right),
\end{align}
where the first term comes from taking the residue at the pole $p=p_0$ of ${\rm K}(p_0,p)$ and the second term is the contributions from the double pole of $\omega_{0,2}(p,p_i)$. This means that 
\begin{equation}
    P_{0,3}(p_0,p_1,p_2)=2\omega_{0,1}(p_0)\sum_{\substack{i,j=1\\i\neq j}}^2d_{p_i}\Bigl({\rm K}(p_0,p_i)\cdot\Delta\omega_{0,2}(p_i,p_j)\Bigr).\label{P03}
\end{equation}
Notice that $P_{0,3}$ is invariant under the involution in $p_0$. Furthermore, ${\rm K}(p_0,p_i)$ on the right hand side of \eqref{P03} has no poles at $p_0=r\in\mathcal{R}^*$ and $\omega_{0,1}$ is non-singular at each effective ramification point by definition (in fact due to the anti-invariance under the involution and Assumption \ref{ass:tr}, the order of any zero of $\omega_{0,1}$ is always two). Thus, the loop equations hold for $\omega_{0,3}$.

From \eqref{w1,1-1} in the previous proof, it follows
\begin{equation}
    P_{1,1}(p_0)=2\omega_{0,1}(p_0)\sum_{q\in\mathcal{P}^{(1)} \cap\, \widetilde{\mathcal{P}}_-}\underset{p=q}{\text{Res}}\,{\rm K}(p_0,p)\left(\omega_{\frac12,1}(p)\cdot\omega_{\frac12,1}(p)+\mathscr{Q}\,dx(p)\cdot d_{p}\left(\frac{\omega_{\frac12,1}(p)}{dx(p)}\right)\right).\label{P1,1-1}
\end{equation}
By similar reasoning to the case of $P_{0,3}$, we conclude this satisfies the loop equations.


Finally, from \eqref{P1/2,20}, \eqref{pole2}, we find that for all other cases
\begin{equation}
    P_{g,n+1}(p_0,J)=4\omega_{0,1}(p_0)\sum_{i=1}^n d_{p_i}\Bigl({\rm K}(p_0,p_i)\cdot \omega_{g,n}(J)\Bigr).\label{Pg,n+1-1}
\end{equation}
This satisfies the loop equations by a similar reasoning as above.
\end{proof}

\begin{lemma}[Lemma \ref{lem:w12,2}]The bidifferential $ \omega_{\frac12,2}$ is symmetric:
\vspace{-1mm}
\begin{equation}
    \omega_{\frac12,2}(p_0,p_1)=\omega_{\frac12,2}(p_1,p_0).
\end{equation}
\end{lemma}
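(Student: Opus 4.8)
The plan is to massage the recursion for $\omega_{\frac12,2}$ into a form whose residues run over the $p_0\leftrightarrow p_1$--symmetric set $\mathcal{R}\cup\{\sigma(p_0),\sigma(p_1)\}$ and whose integrand splits into a manifestly symmetric piece plus a ${\bm \mu}$--dependent correction; this is precisely the form \eqref{w1/2,22again}, from which symmetry can be read off. I would start from the simplified expression \eqref{P1/2,20} obtained in the proof of Lemma~\ref{lem:poles}, substitute the explicit shape of $\omega_{\frac12,1}$ from Lemma~\ref{lem:pole1/2,1} as $\omega_{\frac12,1}(p)=\tfrac{\mathscr{Q}}{2}\bigl(-\tfrac{dy(p)}{y(p)}+G(p)\bigr)$ with $G(p):=\sum_{q\in\widetilde{\mathcal{P}}_+}\mu_q\,{\rm ord}_q(y)\,\eta_q(p)$, and rewrite the kernel \eqref{kernel} using $\Delta y=2y$ as $4\,y(p)\,dx(p)\,{\rm K}(p_0,p)=\int_{\sigma(p)}^p\omega_{0,2}(p_0,\cdot)=:\eta_p(p_0)$, together with the identity $d_p\eta_p(p_0)=\Delta\omega_{0,2}(p,p_0)$.

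The first genuine computation is to produce the symmetric term. I would integrate the $d_p$--term of $\text{Rec}^{\mathscr{Q}}_{\frac12,2}$ in \eqref{Rec1/2,2} by parts (legitimate residue-by-residue, since an exact differential is residueless), moving the derivative onto $\eta_p(p_0)/y(p)$. Using $d_p\eta_p(p_0)=\Delta\omega_{0,2}(p,p_0)$ this generates $\Delta\omega_{0,2}(p,p_0)$, and combining it with the $-dy/y$ part of $\omega_{\frac12,1}$ and the base relation \eqref{w02base} collapses these contributions into $\tfrac{\Delta\omega_{0,2}(p,p_0)\Delta\omega_{0,2}(p,p_1)}{4\,y(p)\,dx(p)}$, the first term of \eqref{w1/2,22again}; the leftover pieces (each proportional to $\tfrac{dx(p_1)}{(x(p)-x(p_1))^2}\,d_p(\eta_p(p_0)/y(p))$) are $\sigma$--invariant and are checked to cancel after invoking that the total residue sum vanishes. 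The $G$--dependent remainder is exactly the second term of \eqref{w1/2,22again}.

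With \eqref{w1/2,22again} in hand, symmetry is argued in two stages. The first term has an integrand manifestly invariant under $p_0\leftrightarrow p_1$ and is summed over the symmetric residue set, so its contribution is symmetric with no further work. For the second term, I would observe that its integrand ${\rm K}(p_0,p)\,\Delta\omega_{0,2}(p,p_1)\,G(p)$ is $\sigma$--invariant in $p$ (the kernel is $\sigma$--invariant in its second slot, while $\Delta\omega_{0,2}(\cdot,p_1)$ and $G$ are each $\sigma$--anti-invariant) and is regular at every ramification point, since the simple pole of ${\rm K}$ is cancelled by the simple zero of $\Delta\omega_{0,2}$. Hence the residues at $\mathcal{R}$ vanish, and because the integrand is $\sigma$--invariant (cf. Lemma~\ref{lem:contour}) the residue at $\sigma(p_i)$ equals that at $p_i$, so the contribution reduces to the residues at $p_0$ and $p_1$ alone: the one at $p_0$ comes from the simple pole of the kernel and yields an algebraic term $\propto\Delta\omega_{0,2}(p_0,p_1)G(p_0)/\bigl(y(p_0)\,dx(p_0)\bigr)$, while the one at $p_1$ comes from the double pole of $\Delta\omega_{0,2}(p,p_1)$ and yields a derivative term in $\partial_p\bigl({\rm K}(p_0,p)G(p)\bigr)|_{p=p_1}$.

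The heart of the argument, and the step I expect to be the main obstacle, is verifying directly that this sum of an algebraic residue and a derivative residue is invariant under $p_0\leftrightarrow p_1$. This is not manifest, because the swap exchanges the simple-pole and double-pole residues, so the identity amounts to a nontrivial cancellation between the kernel-derivative terms and the algebraic terms. I would carry it out by a local expansion of ${\rm K}$, $\eta_p$, and $\Delta\omega_{0,2}$ near $p_0$ and $p_1$, using $\eta_p(p_0)=\tfrac{dp_0}{p_0-p}-\tfrac{dp_0}{p_0-\sigma(p)}$ on $\mathbb{P}^1$; a further integration by parts trading $\Delta\omega_{0,2}(p,p_1)=d_p\eta_p(p_1)$ for $d_p$ on the remaining factors makes the manifestly symmetric combination $\eta_p(p_0)\eta_p(p_1)$ appear and isolates the genuinely asymmetric remainder, which must then be shown to vanish. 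Once this is established, both terms of \eqref{w1/2,22again} are symmetric and the lemma follows.
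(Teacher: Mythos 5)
Your route to \eqref{w1/2,22again} --- integrating the $\mathscr{Q}$-term by parts, splitting $\omega_{0,2}(\sigma(p),p_1)$ via \eqref{w02base}, and substituting the explicit form of $\omega_{\frac12,1}$ --- is correct and is exactly how the paper proceeds, as is your observation that the first term of \eqref{w1/2,22again} is manifestly symmetric. The genuine gap is in how you dispose of the ${\bm \mu}$-dependent term. You claim its integrand ${\rm K}(p_0,p)\cdot\Delta\omega_{0,2}(p,p_1)\cdot G(p)$, with $G(p):=\sum_{q\in\widetilde{\mathcal{P}}_+}\mu_q\,{\rm ord}_q(y)\,\eta_q(p)$, is regular at every ramification point because ``the simple pole of ${\rm K}$ is cancelled by the simple zero of $\Delta\omega_{0,2}$''. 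But $\Delta\omega_{0,2}(\cdot,p_1)$ has \emph{no} zero at $p=r\in\mathcal{R}$: in a local coordinate $z$ centred at $r$ with $\sigma(z)=-z$, one has $\Delta\omega_{0,2}(p,p_1)=d\xi_{-1}(p_1)\,dz+O(z^2)$, and its value at $z=0$ is $2\,\omega_{0,2}(r,p_1)\neq0$, because the pullback by $\sigma$ picks up a sign through $d\sigma=-{\rm id}$ at the fixed point, so the two terms add rather than cancel. What has a simple zero at $\mathcal{R}$ is the \emph{sum} $\omega_{0,2}(p,p_1)+\omega_{0,2}(\sigma(p),p_1)=\frac{dx(p)dx(p_1)}{(x(p)-x(p_1))^2}$ of \eqref{w02base} (because $dx$ vanishes there); you have swapped the roles of the sum and the difference. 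Consequently, at any $r\in\mathcal{R}^*$ where $\omega_{0,1}$ has a double zero the integrand has a simple pole with generically nonzero residue --- for instance both ramification points of the Weber curve with $\mu_{\infty_+}\neq 0$ --- so your premise fails. The proposal is moreover internally inconsistent: since the integrand is $\sigma$-invariant and the sum of all residues of a meromorphic differential vanishes, one has ${\rm Res}_{p=p_0}+{\rm Res}_{p=p_1}=-\tfrac12\sum_{r\in\mathcal{R}}{\rm Res}_{p=r}$; hence if the residues at $\mathcal{R}$ really did vanish, the residues at $p_0$ and $p_1$ that you designate as ``the heart of the argument'' would sum to zero identically, and there would be nothing left to compute.

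The paper's reduction goes in exactly the opposite direction, and this is the idea your proposal is missing. By $\sigma$-invariance and Lemma~\ref{lem:contour}, the full sum of residues over $\mathcal{R}\cup\{p_0,p_1\}$ equals one half of the sum over $\mathcal{R}$ \emph{alone}: the residues at $p_0,p_1$ are eliminated in favour of those at the ramification points, yielding \eqref{w1/2,23}. The symmetry of that expression is then established by a local computation at each $r\in\mathcal{R}$: since $\Delta y\cdot dx$ has at most a double zero there (Assumption \ref{ass:tr}), and the numerator of ${\rm K}$ and $\Delta\omega_{0,2}$ have the odd/even expansions $\sum_{k>0}d\xi_{-2k+1}(p_0)z^{2k-1}$ and $\sum_{k>0}(2k-1)d\xi_{-2k+1}(p_1)z^{2k-2}dz$ while $G$ is regular with even coefficient function, the only term surviving the residue is $c_r\,d\xi_{-1}(p_0)\,d\xi_{-1}(p_1)$ as in \eqref{w1/2,24}, which is manifestly symmetric under $p_0\leftrightarrow p_1$. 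In short, the nontrivial symmetric contribution you were hunting for at $p_0$ and $p_1$ sits entirely at the ramification points; to repair your argument, replace your final step by this contour-exchange and local-expansion computation.
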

\begin{proof}
\hfill

\noindent The recursion for $\omega_{\frac12,2}$ reads
\allowdisplaybreaks[0]
\begin{align}
    \omega_{\frac12,2}(p_0,p_1)=-2&\left(\sum_{r\in\mathcal{R}}\underset{p=r}{\text{Res}}+\underset{p=\sigma(p_0)}{\text{Res}}+\underset{p=\sigma(p_1)}{\text{Res}}\right){{\rm K}(p_0,p)}\nonumber\\
    &\hspace{1cm}\times \biggl(\omega_{\frac12,1}(p)\cdot\Delta\omega_{0,2}(p,p_1)-\mathscr{Q}\,dx(p)\cdot d_p\left(\frac{\omega_{0,2}(\sigma(p),p_1)}{dx(p)}\right)\biggr).\label{w1/2,21}
\end{align}
\allowdisplaybreaks[1]
We will focus on the second term inside the parentheses in \eqref{w1/2,21}. For any curve satisfying our assumptions, \eqref{w02base} holds which is equivalently written as
\begin{equation}
    -\omega_{0,2}(\sigma(p),p_1)=-\frac{dx(p)dx(p_1)}{2(x(p)-x(p_1))^2}+\frac12\Delta\omega_{0,2}(p,p_1).
\end{equation}
Note that
\begin{equation}
    -2\left(\sum_{r\in\mathcal{R}}\underset{p=r}{\text{Res}}+\underset{p=\sigma(p_0)}{\text{Res}}+\underset{p=\sigma(p_1)}{\text{Res}}\right){{\rm K}(p_0,p)}\cdot dx(p)\cdot d_p\left(\frac{dx(p_1)}{2(x(p)-x(p_1))^2}\right)=0.
\end{equation}
This follows since the integrand is invariant under the involution $\sigma$, so by Lemma~\ref{lem:contour} we may compute it as a sum of residues over $\mathcal{R}$. Since the residue of a total derivative of a meromorphic function vanishes (equivalently, integrating by parts), the second term of \eqref{w1/2,21} becomes
\begin{align}
    \mathscr{Q}\left(\sum_{r\in\mathcal{R}}\underset{p=r}{\text{Res}}+\underset{p=\sigma(p_0)}{\text{Res}}+\underset{p=\sigma(p_1)}{\text{Res}}\right)\left(\frac{\Delta\omega_{0,2}(p,p_0)}{4\, y(p)}-{\rm K}(p_0,p)\cdot dx(p)\cdot  \frac{dy(p)}{y(p)}\right)\frac{\Delta\omega_{0,2}(p,p_1)}{dx(p)}.
\end{align}
Together with \eqref{w1/2,1-2A}, we find
\begin{align}
    \omega_{\frac12,2}(p_0,p_1)={\mathscr{Q}}\left(\sum_{r\in\mathcal{R}}\underset{p=r}{\text{Res}}+\underset{p=\sigma(p_0)}{\text{Res}}+\underset{p=\sigma(p_1)}{\text{Res}}\right)&\Biggl(\frac{\Delta\omega_{0,2}(p,p_0)\cdot\Delta\omega_{0,2}(p,p_1)}{4\, y(p)dx(p)}\nonumber\\
    &-{\rm K}(p_0,p)\cdot\Delta\omega_{0,2}(p,p_1)\cdot\!\!\!\sum_{q\in \widetilde{\mathcal{P}}_+}\,\mu_q\,{\rm ord}_q(y)\eta_q(p)\Biggr).\label{w1/2,22}
\end{align}
The first line of \eqref{w1/2,22} is manifestly symmetric under $p_0\leftrightarrow p_1$, hence we are left with proving the symmetry for the second line. Note that the integrand of the second line is invariant under the involution, hence Lemma~\ref{lem:contour} shows that it can be written as 
\begin{equation}
  -\frac{\mathscr{Q}}{2}\sum_{q\in \widetilde{\mathcal{P}}_+} \sum_{r\in\mathcal{R}}\underset{p=r}{\text{Res}}\, {\rm K}(p_0,p)\biggl(\mu_q\,{\rm ord}_q(y)\eta_q(p)\cdot\Delta\omega_{0,2}(p,p_1)\Biggr).\label{w1/2,23}
\end{equation}
\noindent Thus since the residue is taken only at ramification points, after evaluation the only possible poles in $p_0$ and $p_1$ must lie in $\mathcal{R}$.

We will show that the contribution of \eqref{w1/2,23} at each ramification point is symmetric. As explained in \cite{ABCD,BBCCN,BO2,O2}, in a neighbourhood of $r\in\mathcal{R}$, $\omega_{0,2}$ is expanded in a local coordinate $z:=z(p)$ so that
\begin{align}
   \Delta\omega_{0,2}(p,p_1)&=\sum_{k>0}(2k-1)d\xi_{-2k+1}(p_1)\cdot z^{2k-2}dz,\\
   \int_{\sigma(p)}^p\omega_{0,2}(p_0,\cdot) &=\sum_{k>0}d\xi_{-2k+1}(p_0)\cdot z^{2k-1}(p),
\end{align}
where $d\xi_{-2k+1}$ is a meromorphic differential locally defined around $r\in\mathcal{R}$, and whose only pole is of order $2k$ of coefficient 1. See \cite{ABCD,BBCCN,BO2,O2} for the details. Since $\omega_{0,1}$ has at most a double zero at $\mathcal{R}$ by Assumption \ref{ass:tr}, the residue computation in \eqref{w1/2,23} at each $r\in\mathcal{R}$ gives a contribution of
\begin{equation}
    c_{r}\,d\xi_{-1}(p_0)d\xi_{-1}(p_1),\label{w1/2,24}
\end{equation}

\noindent where $c_{r}$ is a coefficient determined by the residue which necessarily vanishes unless $\omega_{0,1}$ has a double zero at $r$. The result follows, since \eqref{w1/2,24} is clearly symmetric under the exchange $p_0\leftrightarrow p_1$.
\end{proof}

\begin{lemma}[Lemma \ref{lem:residue3}]
Given a genus zero degree two refined spectral curve, let $p_i \in\Sigma$ for $i=0,\ldots n$ and suppose that $p_i\not\in\mathcal{P}$, and $p_i\neq \sigma(p_j)$ for any $i,j$. Suppose furthermore that lower endpoints $q_i\in {\mathcal{P}}$ are chosen. Then for all $2g+n\geq2$, the integral
\begin{equation}
   \int^{p_0}_{q_0}\cdots\int^{p_{n}}_{q_{n}}\omega_{g,n+1}(\zeta_0,\zeta_1,\ldots,\zeta_n)\label{residue3A}
\end{equation}
where the $(i+1)th$ integral from the left is in the $\zeta_i$ variable, exists and defines a meromorphic function on $\Sigma$ with respect to each $p_i$, regular at $p_i=p_j$ for any $i,j$, and is independent of the path of integration.

\end{lemma}
\begin{proof}

For convenience, in the proof we reverse the order of integrals, so that we integrate in $\zeta_0$ last rather than first. We will actually prove the result that for all $2g+n\geq2$, the intermediate multi-integral of $\omega_{g,n+1}$ with respect to the last $k$ variables
\begin{equation}
    I_{g,n+1}^{(k)}(\zeta_0,,...,\zeta_{n-k};p_{n-k+1},...,p_n):=
    \int_{q_{n-k+1}}^{p_{n-k+1}}\cdots\int_{q_n}^{p_n}\omega_{g,n+1}(\zeta_0,...,\zeta_n),
\end{equation}
exists whenever all $\zeta_i\neq\sigma(\zeta_j)$ and $\zeta_i \neq \sigma(p_j)$, and defines a meromorphic function on $\Sigma$ in $p_i$ (regular at $p_j$), and a residue-free meromorphic differential with respect to each $\zeta_i$. The result then follows when $k=n+1$.

Towards induction, for $2g+n=2$ we first note that Proposition~\ref{prop:Q=0RTR} implies the result for $I_{0,3}^{(k)}$, $k=0,\ldots,3$ and for $I_{1,1}^{(k)}$, $k=0,1$ follows from Lemma~\ref{lem:poles}. 

Since the only poles that could cause a problem integrating $\omega_{\frac12,2}$ are at $\zeta_1=\sigma(\zeta_0)$, by the assumption on the endpoints (the path can be taken avoiding any others) $I^{(1)}_{\frac12,2}$ exists, and is independent of the path thanks to RTR\ref{RTR3}. 
Due to the pole structure of $\omega_{\frac12,2}$ it is clear the only new poles (in $\zeta_0$) appearing in $I^{(1)}_{\frac12,2}$ could 
be at $\zeta_0=\sigma(p_1)$ or $\zeta_0=\sigma(q_1)$. To see if $I^{(2)}_{\frac12,2}$ exists, we must check whether $I^{(1)}_{\frac12,1}$ has a pole in $\zeta_0$ at the endpoints $p_0$, $q_0$. The former is ruled out by the assumption that $p_0\neq\sigma(p_1)$. To check the latter, we first evaluate the residues in \eqref{w1/2,22}, \eqref{w1/2,23} as in the previous proof, to write
\begin{equation}
    \omega_{\frac12,2}(\zeta_0,\zeta_1)= -\mathscr{Q}\left(d_{\zeta_0}\frac{\Delta\omega_{0,2}(\zeta_0,\zeta_1)}{4\, y(\zeta_0)dx(\zeta_0)}+d_{\zeta_1}\frac{\Delta\omega_{0,2}(\zeta_1,\zeta_0)}{4\, y(\zeta_1)dx(\zeta_1)}\right)+\text{reg},\label{eq:w1/2,25}
\end{equation}
where reg denotes terms regular as $\zeta_0\to\sigma(\zeta_1)$ (the contribution from the second line in \eqref{w1/2,22}).

\noindent 
Explicitly integrating the first two terms in \eqref{eq:w1/2,25} from $q_1$ to $p_1$, we obtain
\begin{align}
 &\int_{q_1}^{p_1} -\mathscr{Q}\left(d_{\zeta_0}\frac{\Delta\omega_{0,2}(\zeta_0,\zeta_1)}{4\, y(\zeta_0)dx(\zeta_0)}+d_{\zeta_1}\frac{\Delta\omega_{0,2}(\zeta_1,\zeta_0)}{4\, y(\zeta_1)dx(\zeta_1)}\right)\nonumber\\
    &= -\mathscr{Q}\left(d_{\zeta_0}\frac{\eta^{p_1}(\zeta_0)-\eta^{q_1}(\zeta_0)}{4\, y(\zeta_0)dx(\zeta_0)}+\frac{\Delta\omega_{0,2}(z_1,\zeta_0)}{4\, y(p_1)dx(p_1)}-\frac{\Delta\omega_{0,2}(q_1,\zeta_0)}{4\, y(q_1)dx(q_1)}\right),\label{eq:w1/2,26}
\end{align}

\noindent At $\zeta_0=\sigma(q_1)$ the last term vanishes since $\sigma(q_1)\in{\mathcal{P}}$, and since the denominator $\omega_{0,1}(\zeta_0)$ in the first term has a pole there, the only potentially singular term $\eta^{q_1}(\zeta_0)$ is in fact regular at $\zeta_0= \sigma(q_1)$. 
 
 Thus, irrespective of the choice of $q_0\in\mathcal{P}$, $I^{(1)}_{\frac12,2}$ is holomorphic in $\zeta_0$ along the (next) path, so that $I^{(2)}_{\frac12,2}$ exists. Furthermore, the result is independent of the path since $I^{(1)}_{\frac12,2}$ is residue-free in $\zeta_0$ (this follows from \ref{eq:w1/2,26} and the product structure of the ``reg" terms,
 described in \ref{w1/2,24}). Since $I^{(1)
 }_{\frac12,2}$ was regular at $\zeta_0=p_1$,
 $I^{(2)}_{\frac12,2}(p_0,p_1)$ is regular at $p_0=p_1$.

We now proceed by induction on both $2g+n$ and $k$. 
First we assume that for all $(\tilde{g},\tilde{n})$ with $2\tilde{g}+\tilde{n} < 2g+n $ and for all $0\leq \tilde{k} \leq \tilde{n}+1$  the integral $I_{\tilde{g},\tilde{n}+1}^{(\tilde{k})}$
\begin{equation}
    \int_{q_{\tilde{n}-\tilde{k}+1}}^{p_{{\tilde{n}}-\tilde{k}+1}}\cdots\int_{q_{\tilde{n}}}^{p_{\tilde{n}}}\omega_{\tilde{g},\tilde{n}+1}(\zeta_0,...,\zeta_{\tilde{n}})
\end{equation}
exists and defines a residue-free meromorphic differential in $\zeta_0,...,\zeta_{\tilde{n}-\tilde{k}}$ and a meromorphic function in $p_{\tilde{n}-\tilde{k}+1},...,p_{\tilde{n}}$, which is regular at $\zeta_i= \sigma(q_j)$ for $i\in\{1,\ldots,\tilde{n}-\tilde{k}\}$ and $j\in\{\tilde{n}-\tilde{k}+1,...,\tilde{n}\}$. 

Now we consider $(g,n)$ and induct on $k$. The result holds trivially for $I^{(0)}_{g,n}=\omega_{g,n+1}$, and we assume inductively it holds for $I^{(k)}_{g,n}$. Then the integral $I_{g,n+1}^{(k+1)}$ in $\zeta_{n-k}$ exists by assumption. When $k=n$, we are done integrating, and by the residue-freeness $I^{(n+1)}_{g,n+1}$ thus defines a meromorphic function in $p_0,...,p_n$. In the case $0\leq k <n$ (so there are at least two more integrals to do), since $I^{(k+1)}_{g,n+1}$ is symmetric in $\zeta_0,...,\zeta_{n-k}$, it is sufficient to show it has no poles at $\zeta_0=\sigma(q_j)$ and it is residue-free with respect to $\zeta_0$.

We proceed by integrating \eqref{pole2}. Although $I_{\tilde{g},\tilde{n}+1}^{(\tilde{k})}$ is regular at $\zeta_0=\sigma(q_j)$ for $j>\tilde{n}-\tilde{k}$ by the inductive assumption, if it contains $\zeta_{n-k}$ as an argument it is \emph{singular} at $\zeta_0=\sigma(\zeta_{n-k})$, and thus one has to confirm that $I_{g,n+1}^{(k+1)}$ is regular at $\zeta_0=\sigma(q_{n-k})$ after integrating with respect to $\zeta_{n-k}$. Let us focus on the contribution from the second term in \eqref{pole2}. Since the $\zeta_0$-dependence only appears in $\mathrm{K}(\zeta_0,\zeta_i)$, the potentially problematic term is the following total derivative:
\begin{equation}
    2d_{\zeta_{n-k}}\left(\mathrm{K}(\zeta_0,\zeta_{n-k})\cdot I_{g,n}^{(k)}(\zeta_1,...,\zeta_{n-k},p_{n-k+1},...,p_{n})\right).
\end{equation}
However, since the denominator of $\mathrm{K}(\zeta_0,\zeta_{n-k})$ is $\omega_{0,1}(\zeta_{n-k})$, and since $I_{g,n}^{(k)}$ is regular as $\zeta_{n-k}=\sigma(q_i)$, the contribution from the endpoint $q_{n-k}$ after integration with respect to $\zeta_{n-k}$ vanishes. Thus, the second term in \eqref{pole2} becomes regular at $\zeta_0=\sigma(q_{n-k})$ after integrating with respect to $\zeta_{n-k}$.

We next consider the contribution from the first term in \eqref{pole2}. By the inductive assumption, most terms (containing $I^{(k')}$, $k'<k$) trivially become regular at $\zeta_0=\sigma(q_{n-k})$. However, since the inductive assumption does not apply to $\omega_{0,2}$, potentially problematic terms are:
\begin{equation}
    -\frac{1}{2\omega_{0,1}(\zeta_0)}\cdot\Delta\omega_{0,2}(\zeta_0,\zeta_{n-k})\cdot I_{g,n}^{(k)}(\zeta_0,...,\zeta_{n-k-1},p_{n-k+1},...,p_{n}).\label{A51}
\end{equation}
Although
\begin{equation}
    \int_{q_{n-k}}^{p_{n-k}}\Delta\omega_{0,2}(\zeta_0,\zeta_{n-k})
\end{equation}
becomes singular at $\zeta_0=\sigma(q_{n-k})$, \eqref{A51} itself becomes regular there due to the presence of $\omega_{0,1}(\zeta_0)$ in the denominator. Thus $I_{g,n+1}^{(k+1)}$ is regular at $\zeta_0=\sigma(q_{n-k})$.

It remains to show that $I_{g,n+1}^{(k+1)}$ is residue-free with respect to $\zeta_0$. Note that it must have vanishing residue at $\mathcal{R}$ because $\omega_{g,n+1}$ itself does, and the pole order at $\mathcal{R}$ does not change when one integrates with respect to other variables. The inductive assumption ensures that there is no residue at $\zeta_0=\sigma(p_j)$ whenever $j\neq n-k$, so that the only remaining $\zeta_0\in\Sigma$ with nonzero residue would be at $\zeta_0=\sigma(p_{n-k})$. Since the sum of all residues vanishes, this one must too.

\end{proof}

\bibliography{refs}
\bibliographystyle{utphys.bst}

\end{document}